\newtheorem{theorem}{Theorem}
\newtheorem{lemma}[theorem]{Lemma}
\newtheorem{proposition}[theorem]{Proposition}
\newtheorem*{definition}{Definition}
\newtheorem*{remark}{Remark}
\newtheorem{corollary}[theorem]{\bf Corollary}
\newcommand{\E}{\mathbb{E}}
\newcommand{\N}{\mathbb{N}}
\newcommand{\Q}{\mathbb{Q}}
\renewcommand{\P}{\mathbb{P}}
\newcommand{\R}{\mathbb{R}}
\newcommand{\Z}{\mathbb{Z}}
\renewcommand{\L}{\mathbb{L}}
\newcommand{\Pbarre}{\overline{\mathbb{P}}}
\newcommand{\Ebarre}{\overline{\mathbb{E}}}
\newcommand{\1}{1\hspace{-1.3mm}1}
\newcommand{\Card}{\operatorname{Card}}
 \def \d {{\delta}}
 \def \l {{\lambda}}
 \def \g {{\gamma}}
 \def \w {{\omega}}
 \def \supp {{\text{ supp }}}
\title{The contact process with aging}
\author{Aurelia Deshayes}
\address{Institut \'Elie Cartan Nancy (math{\'e}matiques)\\Universit{\'e} de Lorraine\\Campus Scientifique, BP 239 \\54506 Vandoeuvre-l{\`e}s-Nancy  Cedex France\\}
\email{Aurelia.Deshayes@univ-lorraine.fr}
\subjclass[2000]{60K35, 82B43.}
\keywords{contact process, asymptotic shape theorem, renormalization, block construction, interacting particles system}
\begin{document}
\begin{abstract}
In this article, we introduce a contact process with aging: in this generalization of the classical contact process, each particle has an integer age that influences its ability to give birth. We prove here a shape theorem for this process conditioned to survive. In order to establish some key exponential decays, we adapt the Bezuidenhout and Grimmett construction~\cite{bezgrim} to build a coupling between our process and a supercritical oriented percolation. Our results also apply to the two-stage contact process introduced by Krone~\cite{krone}.\end{abstract}
\maketitle

\section{Introduction}

The \textit{classical contact process} was introduced by Harris~\cite{harris74} as an interacting particles system modelling the spread of a population on the sites of $\Z^{d}$. It is more specifically defined as a continuous-time Markov process $\{\xi_t\in\{0,1\}^{\Z^{d}},t\geq0\}$ which splits up $\Z^{d}$ into 'living' sites ($\{x\in\Z^d:\xi_t(x)=1\}$) and 'dead' sites ($\{x\in\Z^d:\xi_t(x)=0\}$) and which evolves along the following two rules:
\begin{enumerate}
 \item if a site is alive then it dies at rate $1$;
 \item\label{hic} if a site is dead then  it turns alive at rate $\lambda$ times the number of its living neighbors.
\end{enumerate}
Note that the above point~\ref{hic} can also be read as follows: a living site gives birth to its neighbors at rate $\lambda$, which clearly justifies the name of contact process.

Krone~\cite{krone} later introduced a variant of this model, the so-called \textit{two-stage contact process}, for which there exists an intermediate 'juvenile' type that is forced to mature before it can produce any offspring. The state of the process at time $t$ is given by an element $\xi_t\in\{0,1,2\}^{\Z^d}$. A site in state $0$ is still interpreted as dead but living sites are now classified in two categories: a site in state $1$ is considered as 'young', while a site in state 2 is seen as 'adult'. The model then evolves along quite different rules: only adults can give birth (at rate $\lambda$), each new offspring is considered as young, and transition from young to adult occurs at rate $\gamma$.
The aim of this paper is to extend both above-mentioned models by considering a contact process \textit{with aging}, that is a model where each living particle is endowed with an age in $\N^*=\{1,2,\ldots\}$ (we denote by $\N=\{0,1,2,\ldots\}$). The state of this process at time $t$ is therefore described by an element $\xi_t\in\N^{\Z^d}$, and its dynamics are defined in the same spirit as above: a dead site turns alive (with age one) proportionally to the ages of its so-called  'fertile' neighbors, 
while a living site ages at rate $\gamma$ and dies at rate $1$. Full details regarding this evolution procedure, as well as a convenient graphical construction of the process, will be provided in Section~\ref{model}.

In~\cite{krone}, Krone shows that the two-stage contact process is additive and monotone with respect to its parameters. He also exhibits some bounds on the supercritical region and concludes the paper with a few open questions related to the problem. Recently, Foxall has solved some of these problems in~\cite{foxall}: he has shown, for example, that finite survival and infinite survival are equivalent and he also proved that the complete convergence holds. With these considerations in mind, and going back to our model, our first task in Section~\ref{Ssurvival} will be to identify suitable conditions on the parameters for the process to survive.

A crucial issue regarding the contact process (or its extensions to a random/randomly-evolving environment) is to understand the behaviour of the process in the different regions. For instance, in~\cite{steifwar}, Steif and Warfheimer show that the critical contact process in a randomly-evolving environment dies out, while in~\cite{GMPCRE}, Garet and Marchand establish an asymptotic shape theorem for the contact process in random environment. We will elaborate on the process in the survival case through the exhibition of a shape
theorem: there exists a norm $\mu$ on $\R^{d}$ such that the set $H_t$ of points alive before time $t$ satisfies for every $\epsilon>0$, almost surely, for every large $t$,
\[(1-\epsilon)B_{\mu}\subset\frac{\tilde H_t}t\subset(1+\epsilon)B_{\mu},\]
where $\tilde{H_t}=H_t+[0,1]^d$, $B_\mu$ is the unit ball for $\mu$.

Let us briefly sketch out the main steps towards this shape theorem. Actually, in a similar way as
in the case of the classical contact process, the proof divides into two distinct parts:
\begin{itemize}
\item We first prove that, as soon as the process survives, its growth is at least linear. To do so, we will resort to a similar construction as the one given by Bezuidenhout and Grimmett in~\cite{bezgrim}, and show that a supercritical process conditioned to survive stochastically dominates a two-dimensional supercritical oriented percolation (Section~\ref{Sbezgrim}). The required growth controls indeed follow from this construction (Section~\ref{Scontrols}). As an additional by-product of the percolation construction we obtain the fact that Krone's process dies out on the critical region. 
 \item The second step will consist in exhibiting a shape result knowing that the growth is of linear order. This is analogue to the result of Durrett and Griffeath for the classical contact process~\cite{durrettgriffeath}, and to the result of Garet and Marchand~\cite{GMPCRE} in the random environment case.
\end{itemize}

Let us return to on the second step. Just as in the classical case, the contact process with aging is a non-permanent model for which extinction is possible, and we therefore look for a shape theorem for the process conditioned to survive. The first step is to prove the convergence of the hitting times in one fixed direction $x\in\Z^d$, that is the convergence, as $n$ tends to infinity, of the ratio $\frac{t(nx)}{n}$ where $t(y)$ stands for the first time when the process hits $y$. In 1965, Hammersley and Welsh~\cite{hammwelsh} introduced subadditive techniques to show such a convergence in the case of first passage percolation. Since then, proofs of this type of convergence use the Kingman subadditive ergodic theorem~\cite{king73} and its extensions, which appeal to subadditivity, stationarity and integrability assumptions on the system. 

In non-permanent models such as the one described by our process, the extinction is possible and the hitting times can be infinite so that the standard integrability conditions are not satisfied. On the other hand, if we condition the model to survive, then stationarity and subadditivity properties can be lost. To overcome such lacks, we will lean on an intermediate quantity, introduced in~\cite{GMPCRE} and denoted by $\sigma(x)$, which morally represents a time when the site $x$ is alive and has infinitely many descendants (see Definition~\ref{defsigma}). It turns out that this function $\sigma$ satisfies adequate stationarity properties as well as the almost-subaddivity conditions involved in Kesten and Hammersley’s theorem ~\cite{kestentrick,hamm74}, a well-known extension of Kingman’s seminal result. The last part of
the proof will then be devoted to the control of the difference between $\sigma$ and $t$, and it will allow us to turn the shape theorem for $\sigma$ into the expected shape theorem for $t$.

\section{Model and results}\label{model}
\subsection{Evolution and initial set} 
\subsubsection{Birth parameters} In the contact process with aging, the infection parameter $\lambda$ is replaced with a sequence $\Lambda=(\lambda_i)_{i\in\N}$ of birth parameters. We assume that:
\begin{enumerate}
 \item \label{initial} $\forall i,\l_i\in\R^+$ and $\lambda_0=0$,
 \item \label{croissante} $(\lambda_i)_i$ is non decreasing,
 \item \label{fini} $\lim_{i\rightarrow\infty}\lambda_i=\lambda_\infty<\infty$.
\end{enumerate}
The quantity $\lambda_i$ is the birth parameter for a particle of age $i$. The condition~\ref{fini}  means that the rate of birth is bounded for every age; the condition~\ref{croissante} means that the rate of birth is non decreasing with age. These two conditions are reasonable for the model and are useful for coupling in the sequel. The condition~\ref{initial} account in particular for the fact that a dead site can not give birth to its neighbors.

\subsubsection{Markov process} The contact process with aging (CPA) is a continuous-time homogeneous Markov process $(\xi_t)_{t\geq0}$ taking values in the set of maps from $\Z^d$ to $\N$ (we can also see $(\xi_t)$ as a partition of points of $\Z^d$ according to their age). If $\xi_t(z)=0$ we say that $z$ is dead, while if $\xi_t(z)=i\ge 1$ we say that $z$ is alive with age $i$.

\subsubsection{Evolution} The evolution of the process is as follows:
\begin{itemize}
\item a living site dies at rate $1$ independently of its age,
\item a dead site $z$ turns alive at rate $\sum_{z',\|z'-z\|_1=1}\lambda_{\xi_t(z')}$ (with $\lambda_0=0$),
\item each new offspring has age one,
\item the transition from age $n$ to age $n+1$ occurs at rate $\gamma>0$, independently of its age. 
\end{itemize}
These evolutions are all independent from each other. In the following, we denote by $\mathcal{D}$ the set of cadlag functions from $\R^+$ to $\N^{\Z^d}$: it is the set of trajectories for Markov processes with state space given by the maps from $\Z^d$ to $\N$.

\subsubsection{Initial set} Let $f:\Z^d\to \N$ be a function with finite support. We start the process from the initial configuration $\xi_0=f$. The set $\{x\in\Z^d:f(x)\neq0\}$ is the set of living points and each such $x$ has age $f(x)$. Notations in special cases:
\begin{itemize}
 \item If it is not specified (as in $\xi_t$), then the initial set is the point $0$ with age  $1$. 
 \item If the initial set is $A$, where all particles have age 1, then we denote the corresponding process by $\xi_t^A$.
 \item If the initial set is a single point $x\in\Z^d$ with age $k\in\N^*$, then we denote the corresponding process by $\xi_t^{k\d_x}$. If $k=1$, then we denote the corresponding process by $\xi_t^x$.
\end{itemize}

Our model is a generalization of Krone's model~\cite{krone}. Indeed, if we set $\l_1=0$ and for every $i\geq 1$, $\l_i=\l$ we get Krone's infection pattern of parameters $(\l,\g)$ (with $\d=0$).

Just as in the contact process situation, we use Harris' procedure to construct the CPA on an appropriate probability space.

\subsection{Construction of the probability space} 

We consider $\mathcal{B}(\R_+)$ the Borel $\sigma$-algebra on $\R^+$ and $\E^d$ the edges of $\Z^d$. Let $M$ be the set of locally finite counting measures on $\R_+$ ($m=\sum_{i=0}^\infty\d_{t_i},(t_i)_i\in\R^+$ with $m(K)<\infty$ for every compact set $K$). We endow $M$ with the $\sigma$-algebra $\mathcal{M}$ generated by the maps ($m\rightarrow m(B), B \in\mathcal{B}(\R^+)$). Let
\begin{align*}
  \Omega&=\left(M\times[0,1]^\N\right)^{\E^d}\times \left(M^2\right)^{\Z^d}\\
  \text{ and } \mathcal{F}&=\left(\mathcal{M}\otimes\mathcal{B}\left([0,1]\right)^{\otimes \N}\right)^{\otimes\E^d}\otimes\left(\mathcal{M}\otimes\mathcal{M}\right)^{\otimes \Z^d}.
\end{align*}
On this space we consider the family of probability measures defined as follows
\begin{align*}
\P_{\Lambda,\g}=\left(\mathcal{P}_{\lambda_\infty}\otimes{\mathcal{U}_{[0,1]}}^{\otimes\N}\right)^{\otimes\E^d}\otimes\left(\mathcal{P}_1\otimes\mathcal{P}_{\gamma}\right)^{\otimes\Z^d}
\end{align*}
where $\Lambda=(\lambda_i)_{i\in\N}$ is our sequence of birth parameters, $\mathcal{P}_{\alpha}$ is the law of a Poisson process on $\R_+$ with intensity $\alpha$ (for $\alpha \in\{1,\gamma,\lambda_\infty\}$), and $\mathcal{U}_{[0,1]}$ is the uniform law on $[0,1]$.

An outcome $\omega=(\omega_e^{\infty},u_e,\omega^1_x,\omega^\gamma_x)_{e,x}\in\Omega$ is a realization of all these Poisson processes. For each vertex $x\in\Z^d$, we obtain Poisson processes of parameters $1$ and $\gamma$ and for each edge $e\in\E^d$, we obtain coupled Poisson processes of parameters $(\lambda_i)_i$ described as follows: if $T^\infty_e=(t_k)_{k\in\N}$ are the arrival times of the process $\omega_e^{\infty}$ following $\mathcal{P}_{\lambda_\infty}$ and $u_e=(u_k)_{k\in\N}$ random variables following ${\mathcal{U}_{[0,1]}}^{\otimes\N}$, then the arrival times of the Poisson process $\omega_e^i$ of parameter $\lambda_i$ are
\begin{align*}
 T^i_e=\left\{t:\exists k\in\N^*\text{ with }t=t_k\text{ and }u_k\leq \frac{\lambda_i}{\lambda_\infty}\right\}.
\end{align*}

From now on, we work with the probability space $(\Omega,\mathcal{F},\P_{\Lambda,\gamma})$. The measure $\P_{\Lambda, \gamma}$ has positive spatiotemporal correlations and satisfies the FKG inequality: for all $L\ge0, T\ge 0$ and for all increasing functions $f$ and $g$ on the configurations on $[-L,L]^d\times[0,T]$ we have
\begin{align}
\label{FKG}
\E_{\Lambda,\gamma}\left[fg\right]\ge\E_{\Lambda,\gamma}\left[f\right]\E_{\Lambda,\gamma}\left[g\right]
\end{align}
where $\E_{\Lambda,\gamma}$ is the expectation under $\P_{\Lambda,\gamma}$. When we look at configurations at fixed time, we talk about positive spatial correlations.

\subsection{Graphical construction of the contact process}
For more details, see Harris~\cite{harris78}. Just as in the contact process situation, the contact process with aging can be constructed graphically using families of independent Poisson processes. To get the graphical representation, we begin with the space-time diagram $\Z^d\times\R^+$. It is augmented to give a percolation diagram as follows. First, we consider the arrival times of independent families of Poisson processes:
\begin{itemize} 
\item For each $x\in \Z^d$, denote by $(U_n^x)_n$ the arrival times of the Poisson process $\omega_x^{1}$ with intensity $1$, encoding the potential deaths at $x$. For each space-time point $(x,U_n^x)$, we put a cross $\times$ to indicate that a death will occur if $x$ is alive.
\item For each $x\in\Z^d$, denote by $(W_n^x)_n$ the arrival times of the Poisson process $\omega_x^{\gamma}$ with intensity $\gamma$, encoding the potential agings at $x$.
For each $(x,W_n^x)$, we put a circle $\circ$ to indicate a maturation of $x$: if $x$ has age $k$ before a circle, then $x$ will have age $k+1$ after the circle. 
\item For each $e=\{x,y\}\in\E^d$ and $k\in\N^{*}$, we recall that $(T^{k}_{\{x,y\}})_n$ are the arrival times of the Poisson process $\omega_e^k$ with rate $\lambda_k$, encoding the potential birth through $\{x,y\}$. Between $(x,T_{\{x,y\}}^k)$ and $(y,T_{\{x,y\}}^k)$ we draw an arrow to indicate that, if $x$ is alive with age at least $k$, then there will be a birth at $y$ (with age $1$) if it is not already alive. 
\end{itemize}
\smallskip
\noindent
For $x,y\in\Z^d$, $s<t$ and $i,k\in\N^{*}$, we say that there is an open path from $(x,i,s)$ to $(y,k,t)$ in the diagram if there is a sequence of times $s=s_0<s_1<\cdots<s_{n+1}=t$ and a corresponding sequence of spatial locations $x=x_0,x_1,\ldots,x_n=y$ such that:
\begin{itemize}
\item For $j=1,\ldots,n$, there is an arrow from $x_{j-1}$ to $x_j$ at time $s_j$ so we set \\ $k_j=\min\{k\in\N^{*},~s_{j}\in T_{\{x_{j-1},x_{j}\}}^k\}$.
\item For $j=0,\ldots,n$, the vertical segment $\{x_j\}\times(s_j,s_{j+1})$ does not contain any cross $\times$.
\item For $j=0,\ldots n-1$, the vertical segment $\{x_{j}\}\times(s_j,s_{j+1})$ contains at least $k_{j}$ circles $\circ$.
\item The vertical segment $\{x_0\}\times(s_0,s_1)$ contains at least $k_1-i$ circles and the vertical segment $\{x_n\}\times(s_n,s_{n+1})$ contains exactly $k$ circles.
\end{itemize}
\smallskip
\noindent
With this considerations in mind, let us turn to the definition of our central process $\xi$. For $x,y\in\Z^d$, $i,k\in\N^{*}$ and $t> 0$, we set $\xi_t^{i\delta_x}(y)=k$ if there exists an open path from $(x,i,0)$ to $(y,k,t)$ but not one from $(x,i,0)$ to $(y,k+1,t)$. We extend the definition to any function $f:\Z^d\rightarrow\N$:
\[
\xi_t^f= \max_{x\in\supp f} \xi_t^{f(x)\delta_x}.
\] 
We now define a useful set: let $A_t^x$ be the set of living points at time $t$ that is 
\[  A_t^x = \supp \xi_t^x =\{y\in\Z^d:~ \xi_t^x(y)\neq0\}.\]
We define $A_t^f$ and $A_t^A$ along the same lines.

By construction, we have a property of attractivity: for all functions $f,g$ from $\Z^d$ to $\N$, $f\leq g\implies \xi_t^f\leq \xi_t^g$ and $A_t^f\leq A_t^g$. Moreover, additivity of the process is also immediate from this construction and from the fact that each transition is additive. So, for all functions $f,g$ from $\Z^d$ to $\N$, $\xi_t^{f\vee g}=\xi_t^f\vee\xi_t^g$.

We can extend Harris' proof to show that $(\xi_t^f)_{t\geq 0}$ satisfies the dynamics previously described. Exactly like the classical contact process, the Contact Process with Aging is a Feller process and in particular, it satisfies the strong Markov property. 

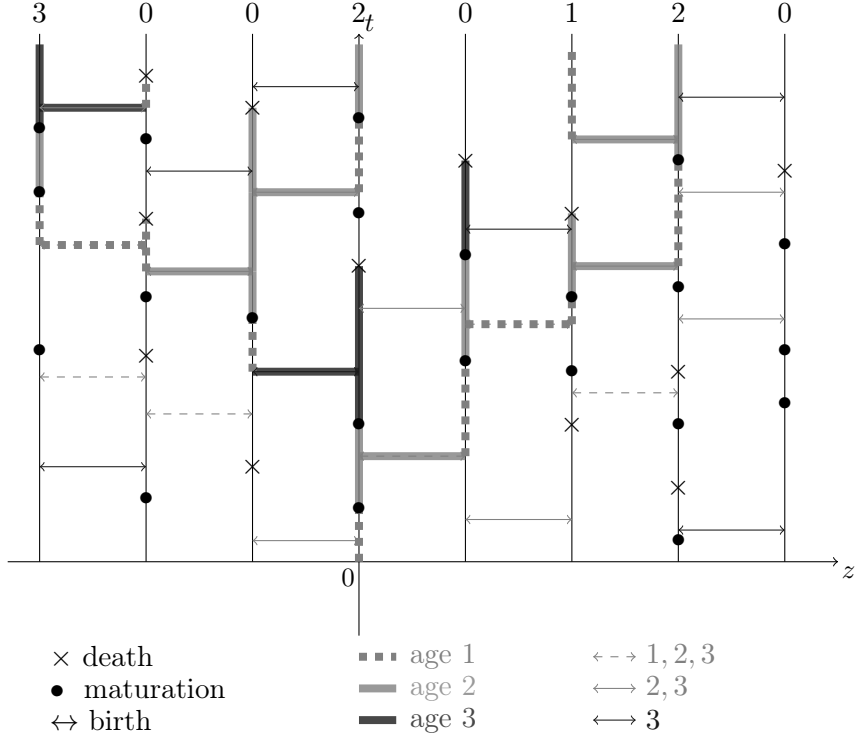
\begin{figure}[!ht]
 \begin{center}

\begin{tikzpicture}[scale=1.4]
\draw[white] (-1,5.5)--(1,5.5);
\draw [->] (-3.3,0)--(4.5,0);
\draw [->] (0,-0.7)--(0,5);
\draw (0.1,5.1) node{\small{$t$}};
\draw (4.6,-0.1) node{\small{$z$}};
\draw (-0.1,-0.15) node{\small{$0$}};
\draw  (1,0)--(1,5); \draw  (2,0)--(2,5); \draw  (3,0)--(3,5); \draw  (4,0)--(4,5);
\draw  (-1,0)--(-1,5); \draw  (-2,0)--(-2,5); \draw  (-3,0)--(-3,5);
\draw[color=gray,dashed,line width=3pt] (0,0)--(0,0.5);
 \draw[gray,line width=3pt,opacity=0.8] (0,0.5)--(0,1.3);
\draw[gray,line width=3pt,opacity=0.8] (0,1)--(1,1);
\draw[color=black!90,line width=3pt,opacity=0.8] (0,1.3)--(0,2.8);
\draw[gray,dashed,line width=3pt] (1,1)--(1,1.9);
\draw[gray,line width=3pt,opacity=0.8] (1,1.9)--(1,2.9);
\draw[gray,dashed,line width=3pt] (1,2.25)--(2,2.25);
\draw[gray,dashed,line width=3pt] (2,2.25)--(2,2.5);
\draw[gray,line width=3pt,opacity=0.8] (2,2.5)--(2,3.3);
\draw[black!90,line width=3pt,opacity=0.8] (1,2.9)--(1,3.8);
\draw[black!90,line width=3pt,opacity=0.8] (0,1.8)--(-1,1.8);
\draw[gray,dashed,line width=3pt] (-1,1.8)--(-1,2.3);
\draw[gray,line width=3pt,opacity=0.8] (-1,2.3)--(-1,2.75);
\draw[gray,line width=3pt,opacity=0.8] (-1,2.75)--(-2,2.75);
\draw[gray,line width=3pt,opacity=0.8] (2,2.8)--(3,2.8);
\draw[gray,dashed,line width=3pt] (3,2.8)--(3,3.8);
\draw[gray,line width=3pt,opacity=0.8] (-1,2.75)--(-1,3.5);
\draw[gray,dashed,line width=3pt] (-2,2.75)--(-2,3.25);
\draw[gray,line width=3pt,opacity=0.8] (-1,3.5)--(0,3.5);
\draw[gray,line width=3pt,opacity=0.8] (-1,3.5)--(-1,4.3);
\draw[gray,dashed,line width=3pt] (-2,3)--(-3,3);
\draw[gray,dashed,line width=3pt] (-3,3)--(-3,3.5);
\draw[gray,line width=3pt,opacity=0.8] (3,3.8)--(3,4.9);
\draw[gray,line width=3pt,opacity=0.8] (3,4)--(2,4);
\draw[gray,dashed,line width=3pt] (2,4)--(2,4.9);
\draw[gray,line width=3pt,opacity=0.8] (-3,3.5)--(-3,4.1);
\draw[black!90,line width=3pt,opacity=0.8] (-3,4.1)--(-3,4.9);
\draw[gray,dashed,line width=3pt] (0,3.5)--(0,4.2);
\draw[gray,line width=3pt,opacity=0.8] (0,4.2)--(0,4.9);
\draw[black!90,line width=3pt,opacity=0.8] (-3,4.3)--(-2,4.3);
\draw[gray,dashed,line width=3pt] (-2,4.3)--(-2,4.6);
\draw(-3,5.2) node{$3$}; \draw(-2,5.2) node{$0$}; \draw(-1,5.2) node{$0$};
\draw(0,5.2) node{$2$}; \draw(1,5.2) node{$0$}; \draw(2,5.2) node{$1$};
\draw(3,5.2) node{$2$}; \draw(4,5.2) node{$0$};
\draw  (-2,4.6) node{$\times$}; \draw  (-2,1.95) node{$\times$}; \draw  (-2,3.25) node{$\times$};
\draw  (-1,0.9) node{$\times$}; \draw  (-1,4.3) node{$\times$}; \draw  (0,2.8) node{$\times$};
\draw  (1,3.8) node{$\times$}; \draw  (2,1.3) node{$\times$}; \draw  (2,3.3) node{$\times$};
\draw  (3,0.7) node{$\times$}; \draw  (3,1.8) node{$\times$}; \draw  (4,3.7) node{$\times$};
\draw  (-3,4.1) node{$\bullet$};  \draw  (-3,2) node{$\bullet$}; \draw  (-3,3.5) node{$\bullet$};
\draw  (-2,0.6) node{$\bullet$}; \draw  (-1,2.3) node{$\bullet$}; \draw  (0,1.3) node{$\bullet$};
\draw  (0,4.2) node{$\bullet$}; \draw  (1,2.9) node{$\bullet$};  \draw  (2,2.5) node{$\bullet$};
\draw  (3,0.2) node{$\bullet$}; \draw  (3,1.3) node{$\bullet$}; \draw  (4,2) node{$\bullet$};
\draw(-2,2.5) node{$\bullet$}; \draw  (0,0.5) node{$\bullet$};
\draw  (0,3.3) node{$\bullet$}; \draw  (2,1.8) node{$\bullet$};
\draw  (1,1.9) node{$\bullet$};  \draw  (3,3.8) node{$\bullet$};
\draw  (3,2.6) node{$\bullet$}; \draw  (4,1.5) node{$\bullet$};
\draw  (4,3) node{$\bullet$}; \draw  (-2,4) node{$\bullet$};
\draw [<->,color=gray!99] (0,0.2)--(-1,0.2);
\draw [<->,gray,dashed] (-3,3)--(-2,3);
\draw [gray,dashed,<->] (-2,1.4)--(-1,1.4);
\draw[gray,dashed,<->] (-3,1.75)--(-2,1.75);
\draw [color=gray!99,<->] (-2,2.75)--(-1,2.75);
\draw [black!90,<->] (-2,3.7)--(-1,3.7);
\draw [black!90,<->] (-1,1.8)--(0,1.8);
\draw [color=gray!99,<->] (-1,3.5)--(0,3.5);
\draw [<->,gray,dashed] (0,1)--(1,1);
\draw [color=gray!99,,<->] (0,2.4)--(1,2.4);
\draw [color=gray!99,<->] (1,0.4)--(2,0.4);
\draw [<->,gray,dashed] (1,2.25)--(2,2.25);
\draw [<->,black!90] (1,3.15)--(2,3.15);
\draw [<->,gray,dashed] (2,1.6)--(3,1.6);
\draw [color=gray!99,<->] (2,2.8)--(3,2.8);
\draw [color=gray!99,<->] (2,4)--(3,4);
\draw [black!90,<->] (3,0.3)--(4,0.3);
\draw [color=gray!99,<->] (3,3.5)--(4,3.5);
\draw [black!90,<->] (-3,0.9)--(-2,0.9);
\draw[color=gray!99,<->] (-3,4.3)--(-2,4.3);
\draw [black!90,<->] (-1,4.5)--(0,4.5);
\draw [color=gray!99,<->] (3,2.3)--(4,2.3);
\draw [black!90,<->] (3,4.4)--(4,4.4);
\draw (-3,-0.9) node[right] {$\times$ death};
\draw (0-3,-1.2) node[right] {$\bullet$ ~maturation}; 
\draw (-3,-1.5) node[right] {$\leftrightarrow$ birth};
\draw[gray,dashed,line width=3pt] (0,-0.9)--(0.35,-0.9) node[right] {age 1};
\draw[gray,line width=3pt,opacity=0.8] (0,-1.2)--(0.35,-1.2) node[right] {age 2};
\draw[black!90,line width=3pt,opacity=0.8] (0,-1.5)--(0.35,-1.5) node[right] {age 3};
\draw[gray,dashed,<->] (2.2,-0.9)--(2.6,-0.9) node[right] {$1,2,3$};
\draw[color=gray!99,<->] (2.2,-1.2)--(2.6,-1.2) node[right] {$2,3$};
\draw[black!90,<->] (2.2,-1.5)--(2.6,-1.5) node[right] {$3$};
\end{tikzpicture}
\end{center}
\caption{Graphical representation of the Contact Process with Aging on $\Z\times\R^+$ (starting from $0$ with age $1$)}
\label{pca}
\end{figure}

\subsection{Time and spatial translations} 
For any $t\geq 0$, we define the time translation operator $\theta_t$ on a locally finite counting measure $m=\sum_{i=1}^\infty \d_{t_i}$ on $\R_+$ by setting 
\[ \theta_t m=\sum_{i=1}^\infty \1_{t_i\geq t}\d_{t_i-t}.\]
It induces an operator on $\Omega$, still denoted $\theta_t$. The Poisson point processes being translation invariant, the probability measure $\P_{\Lambda,\gamma}$ is stationary under the action of $\theta_t$.

Trajectorial version of the semi-group property of the contact process: for every \\$f:\Z^d\rightarrow\N$, for every $s,t\geq0$, for every $\w \in \Omega$, we have
 \[
 \xi_{t+s}^f(\w)=\xi_s^{\xi_t^f(\w)}(\theta_t\w)=\xi_s^{\bullet}(\theta_t\w)\circ \xi_t^f(\w)
 \]
that can also be written in the classical Markovian way: for every $B\in \mathcal{B}({\mathcal{D}})$
\[\P_{\Lambda,\gamma}\left(\left(\xi_{t+s}^f\right)_{s\geq 0}\in B |\mathcal{F}_t\right)= 
\P_{\Lambda,\gamma}\left(\left(\xi_{s}^{\bullet}\right)_{s\geq 0}\in B\right)\circ\xi_t^f.\]
We have the strong Markov property too.
  
We define the spatial translation operator $T_x$ for $x\in\Z^d$ and $\omega\in\Omega$ by \[T_x\omega =\left(\left(\omega_{x+e}\right)_{e\in\E^d},\left(\omega_{x+z}\right)_{z\in\Z^d}\right).\]
$\P_{\Lambda,\gamma}$ is obviously stationary under the action of $T_x$.

Note that $\left(\Omega,\mathcal{F},\P_{\Lambda,\gamma},\theta_t\right)$ and $\left(\Omega,\mathcal{F},\P_{\Lambda,\gamma},T_x\right)$ are both dynamical systems. As strong mixing systems, they are also ergodic.

\subsection{Essential hitting times and associated translations}
For $f:\Z^d\rightarrow\N$, we define the life time of the process starting from $f$ by
\begin{align*}
\tau^f=\inf\{t\geq0: \xi_t^f\equiv0\}= \inf\{t\geq0: A_t^f=\emptyset\}.
\end{align*}
For $f:\Z^d\rightarrow\N$ and $x\in\Z^d$, we also define the hitting time of the site $x$:
\begin{align*}
t^f(x)=\inf\{t\geq 0: \xi_t^f(x)\neq 0\}=\inf\{t\geq 0: x\in A_t^f\}.
\end{align*}
For the sake of clarity, we denote by $\tau^x=\tau^{\delta_x}$, $\tau=\tau^0$ and $t(x)=t^{\delta_0}(x)$.

For fixed $x\in\Z^d$, if we want to prove that the hitting times are such that $t^f(nx)/n$ converges, the Kingman theory requires subadditivity properties. But with non permanent models like the contact process, the hitting times can be infinite (because extinction is possible) and if we condition on the survival, we can lose independence, stationarity and even subadditivity properties.

This is why we rather work with the so-called \textit{essential hitting time} introduced by Garet and Marchand in~\cite{GMPCRE}. For the contact process with aging, we can define it exactly in the same way. We set $u_0(x)=v_0(x)=0$ and we define by induction two sequences of stopping times $(u_n(x))_n$ and $(v_n(x))_n$ as follows.
\begin{itemize}
\item Suppose that $v_k(x)$ is defined. We set 
\begin{align*}
u_{k+1}(x)&=\inf\{t\geq v_k(x):\xi_t^0(x)\neq 0\}\\
&=\inf\{t\geq v_k(x):x\in A_t^0\}.
\end{align*} 
If $v_k(x)$ is finite, then $u_{k+1}(x)$ is the first time after $v_k(x)$ where the site $x$ is once again alive; otherwise, $u_{k+1}(x)=+\infty$.
\item Suppose that $u_k(x)$ is defined, with $k\geq 1$. We set $v_k(x)=u_k(x)+\tau^x\circ \theta_{u_k(x)}$. If $u_k(x)$ is finite, then the time $\tau^x\circ\theta_{u_k(x)}$ is the (possibly infinite) extinction time of the CPA starting at time $u_k(x)$ from the configuration $\delta_x$; otherwise, $v_k(x)=+\infty$.
\end{itemize}
We have $u_0(x)=v_0(x)\leq u_1(x)\leq v_1(x) \leq \ldots \leq u_i(x) \leq v_i(x) \ldots$ 
We then define $K(x)$ to be the first step when $v_k$ or $u_{k+1}$ becomes infinite: 
\[
K(x)=\min\{k\geq 0: v_k(x)=+\infty\text{ or }u_{k+1}(x)=+\infty\}. \]
In Section~\ref{Sergo}, we will prove the following result:
\begin{lemma}
\label{K}
 $K(x)$ has a sub-geometric tail. In particular, $K(x)$ is almost surely finite.
\end{lemma}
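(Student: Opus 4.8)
I would prove Lemma~\ref{K} by reading $K(x)$ as a count of independent ``restart attempts'' and showing each one produces an infinite cluster — and hence terminates the construction — with a probability bounded below. Write $q=\P_{\Lambda,\gamma}(\tau=\infty)$; we work in the survival regime, so $q>0$, and by $T_x$-invariance of $\P_{\Lambda,\gamma}$ one also has $\P_{\Lambda,\gamma}(\tau^x=\infty)=q$. The plan is to establish the recursion $\P_{\Lambda,\gamma}(K(x)\ge n)\le(1-q)\,\P_{\Lambda,\gamma}(K(x)\ge n-1)$ for every $n\ge 2$; combined with $\P_{\Lambda,\gamma}(K(x)\ge 1)=\P_{\Lambda,\gamma}(t(x)<\infty)\le 1$, this gives $\P_{\Lambda,\gamma}(K(x)\ge n)\le(1-q)^{n-1}$, a geometric — hence sub-geometric — tail, and in particular $K(x)<\infty$ almost surely.

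First I would record two routine facts. (i) The $u_k(x)$ and $v_k(x)$ are stopping times of the natural filtration: by induction, if $v_k(x)$ is a stopping time then so is $u_{k+1}(x)=\inf\{t\ge v_k(x):x\in A_t^0\}$ (a hitting time after a stopping time, using right-continuity of $t\mapsto\xi_t^0$), and if $u_k(x)$ is a stopping time then so is $v_k(x)=u_k(x)+\tau^x\circ\theta_{u_k(x)}$, because for $t\ge u_k(x)$ the event $\{\tau^x\circ\theta_{u_k(x)}\le t-u_k(x)\}$ is measurable with respect to the graphical data in the slab $\Z^d\times[u_k(x),t]$. (ii) For $n\ge 2$ the event $\{K(x)\ge n-1\}$ is determined by $u_1(x),v_1(x),\dots,v_{n-2}(x),u_{n-1}(x)$, all of which are at most $u_{n-1}(x)$; hence $\{K(x)\ge n-1\}$ lies in the stopped $\sigma$-algebra $\mathcal{F}_{u_{n-1}(x)}$, and moreover $\{K(x)\ge n-1\}\subseteq\{u_{n-1}(x)<\infty\}$.

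For the recursion, note that from the definition of $K(x)$ one has $\{K(x)\ge n\}\subseteq\{K(x)\ge n-1\}\cap\{v_{n-1}(x)<\infty\}$, and that on $\{u_{n-1}(x)<\infty\}$ the event $\{v_{n-1}(x)<\infty\}$ equals $\{\tau^x\circ\theta_{u_{n-1}(x)}<\infty\}$. Applying the strong Markov property at the stopping time $u_{n-1}(x)$ gives, on $\{u_{n-1}(x)<\infty\}$,
\[
\P_{\Lambda,\gamma}\left(\tau^x\circ\theta_{u_{n-1}(x)}<\infty\,\middle|\,\mathcal{F}_{u_{n-1}(x)}\right)=\P_{\Lambda,\gamma}(\tau^x<\infty)=1-q.
\]
Since $\{K(x)\ge n-1\}$ is $\mathcal{F}_{u_{n-1}(x)}$-measurable and contained in $\{u_{n-1}(x)<\infty\}$, integrating this identity over $\{K(x)\ge n-1\}$ yields $\P_{\Lambda,\gamma}(K(x)\ge n)\le(1-q)\,\P_{\Lambda,\gamma}(K(x)\ge n-1)$, and iterating down to $n=1$ finishes the proof.

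Conceptually the argument is short, so the real work lies in the bookkeeping: verifying carefully that the $u_k,v_k$ are stopping times and that $\{K(x)\ge n-1\}$ is $\mathcal{F}_{u_{n-1}(x)}$-measurable, and invoking the strong Markov property in its trajectorial form so that $\tau^x\circ\theta_{u_{n-1}(x)}$ is genuinely independent of $\mathcal{F}_{u_{n-1}(x)}$ with the law of $\tau^x$. The one substantive hypothesis used is that we are in the survival regime, i.e. $q=\P_{\Lambda,\gamma}(\tau=\infty)>0$; this is exactly the setting in which the essential hitting times are relevant.
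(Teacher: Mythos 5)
Your argument is correct and is essentially the proof the paper gives (its Lemma~\ref{Kgeom}): both apply the strong Markov property at the stopping time $u_{k+1}(x)$, use $\{K(x)>k\}=\{u_{k+1}(x)<\infty\}$, and bound $\P(\tau^x<\infty)\le 1-\rho$ to get the geometric recursion $\P(K(x)>k+1)\le(1-\rho)\P(K(x)>k)$. Your extra bookkeeping about stopping times and $\mathcal{F}_{u_{n-1}(x)}$-measurability is exactly the justification implicit in the paper's one-line use of the strong Markov property.
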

\begin{definition}\label{defsigma} We call essential hitting time of $x$ the quantity $\sigma(x)=u_{K(x)}$. 
\end{definition}
The quantity $\sigma(x)$ is a certain time when $x$ is alive and has infinite progeny, but it is not necessary the first such time. 
By Lemma~\ref{K}, $\sigma(x)$ is well-defined. We define the operator $\tilde{\theta}_x$ on $\Omega$ by setting:
\begin{equation}
\label{theta}
\tilde{\theta}_x=\begin{cases}
  T_x\circ\theta_{\sigma(x)} & \text{if } \sigma(x)<+\infty,\\
  T_x &\text{otherwise}.
\end{cases}
\end{equation}
\begin{remark} It is easily checked that $u_k(x)$ and $u_k(-x)$ (respectively $v_k(x)$ and $v_k(-x)$, $K(x)$ and $K(-x)$) are identically distributed. Accordingly, $\sigma(x)$ and $\sigma(-x)$ are identically distributed.
\end{remark}

\subsection{Survival}
Let $\rho(\Lambda,\gamma)=\P_{\Lambda,\g}\left(\forall t>0,~\xi_t\not\equiv 0\right)$ be the probability that the process survives from the point $0$ at age $1$. For $\gamma>0$, we denote by
\[ S_{\g}=\left\{\Lambda\in\R^\N,\text{ non decreasing }\left/\right.\P_{\Lambda,\g}\left(\forall t>0,~\xi_t\not\equiv 0\right)>0\right\}\]
the supercritical region.

We will see, in the short Section~\ref{Ssurvival}, that for $\gamma$ large enough, $S_\gamma$ is not empty. Besides, the supercritical region does not depend on the finite initial configuration. From Section~\ref{Scontrols} to the end, we will suppose that $\Lambda\in S_\gamma$. We will say that the process $(\xi_t)$ is supercritical when it survives with positive probability. In this context, we will work with a CPA conditioned to survive and we introduce the corresponding probability measure:
\[\forall E \in\mathcal{F},~\overline{\P}_{\Lambda,\g}(E)=\P_{\Lambda,\g}\left(E|~\forall t>0,~\xi_t\not\equiv 0\right).\]
When the context is clear we will omit the indices $\Lambda,\gamma$ in the above notations $\P$ and $\overline{\P}$.

\subsection{Organization of the paper}
First, in Section~\ref{Ssurvival}, we prove the above announced properties on the supercritical region. Section~\ref{Sbezgrim} is devoted to the key point of our work: the construction of a background percolation process when the CPA is supercritical (Theorems~\ref{FST} and~\ref{construction}). In Section~\ref{Scontrols}, we use the construction to prove the following crucial exponential estimates:
\begin{restatable}{theorem}{thmpetitsamas}
\label{petitsamas} For $f:\Z^d\to\N$, if $(\xi_t^f)$ is supercritical, then there exist $A,B>0$ such that for all $t>0$ and $x\in\Z^d$, one has
\begin{align*}
\P\big(t<\tau^f<\infty\big)&\leq A\exp(-B t).
\end{align*}
\end{restatable}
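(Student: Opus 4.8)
The plan is to transfer to the CPA the classical fact that a \emph{supercritical} oriented percolation which eventually dies out survives only for a number of steps with an exponential tail, using the stochastic domination furnished by the block construction of Section~\ref{Sbezgrim}, and then to bridge the one-directionality of that domination by a restart argument in the spirit of Bezuidenhout--Grimmett~\cite{bezgrim} and Garet--Marchand~\cite{GMPCRE}. As a first reduction I would restrict to times of the form $t=nT$, where $T>0$ is the temporal scale of the renormalized blocks produced by Theorem~\ref{construction}: since $t\mapsto\P(t<\tau^f<\infty)$ is non-increasing (the event $\{t<\tau^f<\infty\}$ shrinks as $t$ grows), any bound $\P(nT<\tau^f<\infty)\le A'e^{-B'n}$ yields the announced one at the cost of a factor $e^{B'T}$.

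Next I would use the output of the construction: since $(\xi_t^f)$ is supercritical, Theorem~\ref{construction} provides a length scale $L$, the scale $T$, a "good" local event read off the Poisson processes of $(\Omega,\mathcal F,\P_{\Lambda,\gamma})$ in a bounded space--time box, and a coupling of $(\xi_t^f)$ with a two-dimensional oriented percolation of density $p$ that may be taken arbitrarily close to $1$ (hence supercritical, and dominating a Bernoulli field since the good events are finite-range dependent), in such a way that the percolation being wet at level $n$ forces $\xi_{nT}^f\not\equiv 0$. For such a supercritical oriented percolation started from a finite non-empty set, the contour argument (see e.g.\ Durrett) gives $\P(m\le\zeta<\infty)\le c_1 e^{-c_2 m}$, where $\zeta$ is the extinction level. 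I would then set up the restart structure directly on $(\xi_t^f)$: epochs $0=V_0<V_1<\cdots$, where at the start of each epoch a fresh oriented percolation is coupled to the current (still alive) configuration, after a bounded warm-up making it "good" with probability bounded below; on the event that this percolation survives forever one has $\tau^f=\infty$, and otherwise — by the estimate above applied to that fresh percolation — the length of the epoch has an exponential tail. Consequently, on $\{\tau^f<\infty\}$ the process goes through a number of failed epochs that is stochastically dominated by a geometric variable, and $\{nT<\tau^f<\infty\}$ forces either many failed epochs (probability exponentially small in that number) or a total epoch length exceeding $nT$ with only few epochs (probability exponentially small in $n$ by a large-deviation estimate for a sum of epoch lengths with exponential tails); balancing the two gives $\P(nT<\tau^f<\infty)\le A'e^{-B'n}$, and the first paragraph concludes.

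The step I expect to be the main obstacle is the bookkeeping of this restart scheme: defining the $V_k$ as stopping times with respect to the filtration of the graphical construction, ensuring that the successive epochs are independent enough for the oriented-percolation estimate to apply at each epoch (this is where the strong Markov property of the CPA, the restart property of Harris' graphical representation, and the FKG inequality~\eqref{FKG} enter), and quantifying the "warm-up" lemma that a live configuration reaches a good one within bounded time with probability bounded below — a growth estimate that should itself be part of, or an easy consequence of, the material of Section~\ref{Sbezgrim}. Once these ingredients are in place, the computation is the routine large-deviation summation sketched above, and the argument parallels the corresponding one for the classical contact process.
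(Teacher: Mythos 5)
Your proposal is correct and follows essentially the same route as the paper: the paper's ``restart argument'' lemma and Corollary~\ref{controls} implement exactly your scheme (warm-up to a fully occupied cube with probability bounded below, coupling via Theorem~\ref{construction} with a locally dependent oriented percolation dominated by a supercritical Bernoulli one, exponential tail for the percolation's extinction time as in Corollary~\ref{DOPexpo}, a geometric number of failed epochs, and a moment-generating-function summation), producing a random time $\sigma$ with exponential tail that exceeds $\tau^f$ on $\{\tau^f<\infty\}$. The theorem then follows from $\P(t<\tau^f<\infty)\le\P(\sigma>t)$, just as in your sketch.
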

\begin{restatable}{theorem}{thmaumlineaire}
\label{au-lineaire} For $f:\Z^d\to\N$, if $(\xi_t^f)$ is supercritical, then there exist $A,B>0$ such that for all $t>0$ and $x\in\Z^d$, one has
\begin{align*}
\P(t^{f}(x)\geq C\|x\|+t,\tau^{f}=\infty)&\leq A\exp(-Bt),
\end{align*}
which we call the ``at least linear growth''.
\end{restatable}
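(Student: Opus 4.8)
The plan is to deduce this ``at least linear growth'' estimate from the block construction of Section~\ref{Sbezgrim} (Theorem~\ref{construction}) together with classical large-deviation estimates for supercritical oriented percolation. Recall that the construction provides space and time scales $L,T$ and, when the CPA is supercritical, a coupling under which a suitable family of ``good'' local events stochastically dominates a supercritical oriented percolation on a renormalized lattice, with parameter as close to $1$ as one wishes; moreover, when a renormalized site is wet the original process occupies the whole space block of side $L$ around the corresponding point at the corresponding time. The argument then splits into three ingredients: (i) from any supercritical surviving process $(\xi_t^f)$, the process reaches after a random ``startup'' time with exponential tail a space-time seed from which an \emph{infinite} wet oriented-percolation cluster grows; (ii) such an infinite cluster invades, in every one of the $2d$ coordinate directions, all renormalized sites within renormalized distance $\kappa n$ of the seed by renormalized time $n$, up to an event of probability $\le A\exp(-Bn)$; (iii) these two facts, pulled back through the renormalization, yield the claimed bound on $t^f(x)$.

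For (i) I would run the usual restart procedure, following Bezuidenhout--Grimmett and Garet--Marchand~\cite{bezgrim,GMPCRE}: scan the space-time diagram for the first candidate seed (a fully occupied $L$-block at an appropriate time), launch the oriented percolation attached to its forward cone, and test whether it percolates; if it does not, the corresponding sub-process is a copy of a supercritical CPA that dies out, so by Theorem~\ref{petitsamas} it does so within a time with exponential tail, after which a fresh candidate seed is sought. On $\{\tau^f=\infty\}$ infinitely many candidate seeds appear; each is successful with probability bounded below (the dominated percolation being supercritical with parameter near $1$), and the failed attempts cost times with exponential tails, so the total startup time $t_1$ satisfies $\P(t_1\ge s,\ \tau^f=\infty)\le A\exp(-Bs)$, together with a bound on the spatial displacement of the successful seed from the origin that is linear in $t_1$. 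This is the step where survival of the \emph{process} (rather than of the percolation) must be handled with care, and I expect it to be the main obstacle: one has to organise the successive attempts as a well-defined increasing sequence of stopping times so that the strong Markov property and Theorem~\ref{petitsamas} can be applied at each of them, exactly as in the renormalization scheme of Section~\ref{Sbezgrim}.

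For (ii), once an infinite wet cluster is present it is a classical property of supercritical oriented percolation (see the analogous estimate used in~\cite{GMPCRE}) that, conditionally on the cluster being infinite, the wet set at renormalized time $n$ contains every renormalized site at $\|\cdot\|_1$-distance at most $\kappa n$ from the seed except on an event of probability $\le A\exp(-Bn)$; equivalently, a renormalized site at distance $m$ is wet within $\le c\,m$ renormalized steps, the excess having an exponential tail. Pulling back: a renormalized site containing $x$ lies at distance $O(\|x\|/L)+O(t_1/L)$ from the successful seed, hence becomes wet within actual time at most $c'\|x\|+c''t_1+T\cdot(\text{excess})+O(1)$, and when it is wet the whole $L$-block around $x$ is occupied, so $x\in A_s^f$ for that $s$ up to one more bounded step absorbed in the $O(1)$. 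Fixing $C>c'$, the event $\{t^f(x)\ge C\|x\|+t,\ \tau^f=\infty\}$ then forces $(c''+1)\,t_1+T\cdot(\text{excess})\ge t$, hence forces $t_1\ge ct$ or $\text{excess}\ge ct$ for some small $c>0$; by (i) and (ii) each of these has probability $\le A\exp(-Bt)$, with constants uniform in $x$, which gives the theorem. The remaining work is the routine-but-delicate bookkeeping of constants ensuring that the $\|x\|$-terms are genuinely absorbed into $C\|x\|$ with room to spare for $t$, and that the displacement-of-the-seed contribution does not spoil uniformity in $x$.
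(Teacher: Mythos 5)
Your step (i) (restart/startup time with exponential tail, via Theorem~\ref{petitsamas} and the strong Markov property at a sequence of stopping times) is exactly what the paper does. The gap is in step (ii), in the sentence ``when a renormalized site is wet the whole $L$-block around $x$ is occupied, so $x\in A_s^f$''. This is not what the coupling of Theorem~\ref{construction} delivers: when a macroscopic site $(j,k)$ is wet, the construction only guarantees the existence of \emph{some} random point $Y_{j,k}$ in the zone $\mathcal{S}^6_{j,k}$ (of spatial extent of order $a$) such that the small cube $Y_{j,k}+[-n,n]^d$ is fully occupied, with $n$ much smaller than $a$. So hitting once the macroscopic site containing $x$ does not imply that the microscopic site $x$ itself is ever alive, and the shape-type estimate for the wet region (your analogue of Corollary~\ref{DOPcroissance}) cannot by itself be ``pulled back'' to a bound on $t^f(x)$. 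The paper flags exactly this point and repairs it with Corollary~\ref{DOPretouche}: on the infinite cluster, the macroscopic site $\overline{x}$ nearest to $x$ is hit with positive density $\theta$ in time (with exponential control on when this density sets in), which yields a sequence of times $R^a_k(x)$, separated by at least $59b$, at which some occupied $n$-cube lies within $x+[-7a,7a]^d$; at each such occasion, an event of probability bounded below by some $c>0$ (reaching $x$ from that cube within time $58b$) is tested, and by the Markov property the failure probability after $k$ occasions is at most $(1-c)^k$. Combining this geometric bound with the exponential control on $R^a_k(x)$ gives the theorem; your single-hit argument has no substitute for this repeated-trial mechanism.

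A second, structural point: the percolation of Theorem~\ref{construction} is a $(1+1)$-dimensional oriented percolation living in the slab $\Z\times[-a,a]^{d-1}\times\R^+$, not a $d$-dimensional renormalized lattice, so your claim that the cluster invades ``every one of the $2d$ coordinate directions'' does not apply as stated. The paper first proves the estimate for $x$ in the slab and then reaches a general $x$ by chaining $d$ successive slab constructions along the path $(x_1,0,\ldots,0)\to(x_1,x_2,0,\ldots,0)\to\cdots\to(x_1,\ldots,x_d)$, summing the resulting exponential bounds. Your proposal would need both of these ingredients (repeated hits, and the slab-chaining) to become a proof.
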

These controls enable us to establish in Section~\ref{Sergo} the main properties of the transformations $\tilde{\theta}_x$:

\begin{theorem}\label{ergo} For $x\in\Z^{d}\setminus\{0\}$,
\begin{itemize}
\item The probability measure $\Pbarre$ is invariant under the action $\tilde \theta_x$.
\item Under $\Pbarre$, $\sigma(y)\circ\tilde{\theta}_x$ is independent from $\sigma(x)$ and its law is the same as the law of $\sigma(y)$.
\end{itemize}
\end{theorem}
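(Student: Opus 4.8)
The plan is to extract both assertions from the single identity
\begin{equation*}
\overline{\E}\big[(g\circ\tilde\theta_{x})\,\psi(\sigma(x))\big]=\overline{\E}[g]\;\overline{\E}[\psi(\sigma(x))],\tag{$\star$}
\end{equation*}
valid for all bounded measurable $g:\Omega\to\R$ and $\psi:\R_{+}\to\R$. Indeed, taking $\psi\equiv1$ in $(\star)$ is the invariance of $\overline{\P}$ under $\tilde\theta_{x}$; and taking $g=\phi(\sigma(y))$ — so that $g\circ\tilde\theta_{x}=\phi(\sigma(y)\circ\tilde\theta_{x})$ — it reads $\overline{\E}[\phi(\sigma(y)\circ\tilde\theta_{x})\,\psi(\sigma(x))]=\overline{\E}[\phi(\sigma(y))]\,\overline{\E}[\psi(\sigma(x))]$ for all bounded measurable $\phi,\psi$, which (taking $\psi\equiv1$, then feeding the resulting equality in law back into the identity) is precisely the independence of $\sigma(y)\circ\tilde\theta_{x}$ and $\sigma(x)$ together with the equality in law of $\sigma(y)\circ\tilde\theta_{x}$ and $\sigma(y)$ under $\overline{\P}$. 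So the whole theorem reduces to $(\star)$, which I would prove by conditioning the survival event $\{\tau=\infty\}$ on the value of $K(x)$ and regenerating the Harris construction at the stopping times $u_{k}(x)$.

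The key preliminary — and what I expect to be the delicate point — is the regeneration fact that, under $\overline{\P}$, one has $\sigma(x)<\infty$ a.s. and in fact the process restarted from $\d_{x}$ at time $\sigma(x)$ survives; equivalently, writing $D_{k}:=\{K(x)\geq k\}\cap\{u_{k}(x)<\infty\}$,
\begin{equation*}
\{K(x)=k\}\cap\{\tau=\infty\}=D_{k}\cap\{\tau^{x}\circ\theta_{u_{k}(x)}=\infty\}\qquad\text{mod }\P\text{-null sets},\quad k\geq1,
\end{equation*}
and $\overline{\P}(K(x)=0)=0$. First, $D_{k}\in\mathcal{F}_{u_{k}(x)}$: the events $\{v_{j}(x)<\infty\}=\{\tau^{x}\circ\theta_{u_{j}(x)}<\infty\}$, $j<k$, are $\mathcal{F}_{v_{j}(x)}$-measurable and $v_{j}(x)\leq u_{k}(x)$. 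The inclusion $\supseteq$ is attractivity: if $u_{k}(x)<\infty$ and $\tau^{x}\circ\theta_{u_{k}(x)}=\infty$, then $\xi^{0}$ dominates, after time $u_{k}(x)$, the surviving process issued from $\d_{x}$, so $v_{k}(x)=\infty$, hence $K(x)=k$ and $\tau=\infty$. For the reverse inclusion, Lemma~\ref{K} gives $K(x)<\infty$ a.s., so the only thing to exclude on $\{\tau=\infty\}$ is the branch $v_{K(x)}(x)<\infty$, $u_{K(x)+1}(x)=\infty$, i.e.\ that $x$ is reinfected only finitely often although the process survives; this is where the ``at least linear growth'' of Theorem~\ref{au-lineaire} is used. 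On $\{\tau=\infty\}$ the graphical construction yields, for every rational $s$, a site $y\in A_{s}^{0}$ of age $j=\xi_{s}^{0}(y)$ from which an infinite open path emanates, hence $\tau^{j\d_{y}}\circ\theta_{s}=\infty$; Theorem~\ref{au-lineaire} applied to the supercritical process started from $j\d_{y}$ then forces $t^{j\d_{y}}(x)\circ\theta_{s}<\infty$ a.s., so that $x\in A_{t}^{0}$ for some $t>s$ by attractivity. A union bound over $s\in\Q_{+}$ shows that $x$ is reinfected at arbitrarily large times $\overline{\P}$-a.s., which excludes the bad branch (and, taking $s=0$, the case $K(x)=0$).

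Granting this, fix bounded measurable $g$ and $\psi$. On $\{\tau=\infty\}$ one has $\sigma(x)<\infty$, so $\tilde\theta_{x}=T_{x}\circ\theta_{\sigma(x)}$; splitting over $\{K(x)=k\}$, $k\ge 1$, and using the preliminary,
\begin{equation*}
\E\big[(g\circ\tilde\theta_{x})\,\psi(\sigma(x));\ \tau=\infty\big]=\sum_{k\geq1}\E\Big[\psi(u_{k}(x))\,\1_{D_{k}}\cdot\big((g\circ T_{x})\1_{\tau^{x}=\infty}\big)\circ\theta_{u_{k}(x)}\Big].
\end{equation*}
Since $u_{k}(x)$ is a stopping time, $D_{k}\in\mathcal{F}_{u_{k}(x)}$ and $\psi(u_{k}(x))$ is $\mathcal{F}_{u_{k}(x)}$-measurable, the strong Markov property (conditionally on $\mathcal{F}_{u_{k}(x)}$ the shifted configuration $\theta_{u_{k}(x)}\w$ has law $\P$) turns the $k$-th term into $\E[\psi(u_{k}(x))\1_{D_{k}}]\cdot\E[(g\circ T_{x})\1_{\tau^{x}=\infty}]$. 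The same computation with $g\equiv1$ together with the preliminary identity gives $\sum_{k\geq1}\E[\psi(u_{k}(x))\1_{D_{k}}]=\rho^{-1}\E[\psi(\sigma(x));\tau=\infty]=\overline{\E}[\psi(\sigma(x))]$, while $\tau^{x}=\tau\circ T_{x}$ and the $T_{x}$-stationarity of $\P$ give $\E[(g\circ T_{x})\1_{\tau^{x}=\infty}]=\E[g\,\1_{\tau=\infty}]=\rho\,\overline{\E}[g]$. Combining these and dividing by $\rho$ yields $(\star)$, and the theorem follows.
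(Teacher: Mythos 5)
Your proof is correct and follows essentially the paper's route: your ``regeneration'' identity is exactly the second point of Lemma~\ref{Kgeom} (proved, as there, from the at least linear growth of Theorem~\ref{au-lineaire}, though you establish a.s.\ reinfection by a union over rational times rather than by the strong Markov property at $v_k(x)$), and your derivation of $(\star)$ by decomposing over $\{K(x)=k\}$ and applying the strong Markov property at the stopping times $u_k(x)$, together with $\tau^x=\tau\circ T_x$ and spatial stationarity, is precisely the ``law after the restart'' computation of Lemmas 8--9 of~\cite{GMPCRE} that the paper invokes in Corollary~\ref{invariancePbarre}. In short, you have written out in full the argument the paper delegates to the cited reference.
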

We also require the exponential estimates to establish in Section~\ref{Sssadd} the following almost subadditivity property of the essential hitting time $\sigma$: 
\begin{restatable}{theorem}{thmpresquesousadditif}
\label{presquesousadditif}
If $(\xi_t)$ is supercritical, then there exist $A,B>0$ such that for all $x,y\in\Z^d$ and $t>0$, one has
\[\Pbarre\left(\sigma(x+y)-\left(\sigma(x)+\sigma(y)\circ
\tilde{\theta}_x\right)\ge t\right)\le A\exp\left(-B\sqrt{t}\right).\]
\end{restatable}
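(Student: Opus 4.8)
The plan is to decompose the event that $x+y$ is hit late, relative to $\sigma(x)+\sigma(y)\circ\tilde\theta_x$, according to whether the process restarted from $x$ at the essential hitting time $\sigma(x)$ manages to reach $y$ (in the translated coordinates) ``quickly enough''. Concretely, write $\sigma(x+y) = \sigma(x) + \big(\sigma(x+y)-\sigma(x)\big)$ and observe that, on $\{\sigma(x)<\infty\}$, after applying $\tilde\theta_x = T_x\circ\theta_{\sigma(x)}$ the site $x$ is alive with infinite progeny. The key point is that from that moment the descendants of $x$ dominate (via attractivity/additivity) a fresh supercritical CPA started from $\delta_x$, so one expects $\sigma(x+y)-\sigma(x)$ to be comparable to $\sigma(y)\circ\tilde\theta_x$, up to a correction coming from the fact that $\sigma$ is an \emph{essential} hitting time, not the true one. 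I would first record the deterministic inequality
\[
\sigma(x+y) \le \sigma(x) + t^{\bullet}(y)\circ\tilde\theta_x + r,
\]
where $r$ is a remainder term controlling the gap between the true hitting time of $y$ (by the process restarted at $\tilde\theta_x$) and its essential hitting time $\sigma(y)\circ\tilde\theta_x$; this uses the construction of the $u_k,v_k$ and the strong Markov property at $\sigma(x)$ exactly as in Garet--Marchand~\cite{GMPCRE}.

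Next I would bound the two contributions separately. For the gap between $t(y)$ and $\sigma(y)$: by definition $\sigma(y)=u_{K(y)}(y)$, and each excursion interval $[u_k,v_k]$ is an extinction time $\tau^y\circ\theta_{u_k}$ of a subcritical-looking piece, which by Theorem~\ref{petitsamas} has exponential tail conditionally on being finite; moreover $K(y)$ has a sub-geometric tail by Lemma~\ref{K}. Summing $K(y)$ many such intervals, each with exponential tail, produces a term whose tail is of the stretched-exponential type $A\exp(-B\sqrt t)$ — this is the standard mechanism (a geometric-like number of exponential variables) that yields the $\sqrt t$ in the statement, and it is where the square root genuinely enters. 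For the term $t^{\bullet}(y)\circ\tilde\theta_x$ versus $\sigma(y)\circ\tilde\theta_x$, I would use that under $\Pbarre$ the restarted process is again a supercritical CPA conditioned to survive (Theorem~\ref{ergo} gives that $\tilde\theta_x$ preserves $\Pbarre$ and the independence/identical-law of $\sigma(y)\circ\tilde\theta_x$), and invoke the ``at least linear growth'' estimate, Theorem~\ref{au-lineaire}, to say that once we are past $\sigma(y)\circ\tilde\theta_x$ the true hitting configuration is realized within an extra time with exponential tail. Combining, each piece contributes either an exponential or a stretched-exponential tail, and the worst one dictates $A\exp(-B\sqrt t)$.

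The main obstacle, I expect, is the bookkeeping around the essential hitting time under conditioning: one must carefully justify that applying $\theta_{\sigma(x)}$ and then analyzing $\sigma(y)$ on the shifted environment is legitimate, i.e.\ that the strong Markov property applies at the stopping time $\sigma(x)$ and that, after conditioning on global survival, the descendants of $x$ restart as an honest conditioned supercritical process rather than merely an unconditioned one. This is exactly the delicate interplay that $\sigma$ was designed to handle, and I would lean on Theorem~\ref{ergo} together with the additivity of the graphical construction to replace the (possibly badly behaved) conditioned restart by an independent copy dominated from below. A secondary technical nuisance is making the remainder $r$ above genuinely independent (or conditionally independent) of the quantities it is combined with, so that the tail bounds multiply/add cleanly; here one slices time at the successive $u_k(y)\circ\tilde\theta_x$ and uses the Markov property at each, at the cost of the sub-geometric factor $K(y)$, which is precisely what the $\sqrt t$ absorbs.
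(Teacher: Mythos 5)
There is a genuine gap, and it sits at the very first step. Your ``deterministic inequality'' $\sigma(x+y)\le \sigma(x)+t^{\bullet}(y)\circ\tilde\theta_x+r$, with $r$ defined as the gap between the hitting time and the essential hitting time of $y$ in the shifted frame, amounts (since $t(y)\le\sigma(y)$ pathwise) to the pathwise bound $\sigma(x+y)\le\sigma(x)+\sigma(y)\circ\tilde\theta_x$, which is false: the defect $r(x,y)=\bigl(\sigma(x+y)-\sigma(x)-\sigma(y)\circ\tilde\theta_x\bigr)^+$ is a genuinely positive random variable, and the whole content of the theorem is its tail. What is true at time $s=\sigma(x)+\sigma(y)\circ\tilde\theta_x$ is only this: by attractivity, $x+y$ is alive in $\xi^0$ and the restart from $\delta_{x+y}$ at time $s$ survives forever. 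But $s$ need not coincide with any of the stopping times $u_k(x+y)$ of the recursion that \emph{defines} $\sigma(x+y)$ (the excursion in progress at time $s$, started from $\delta_{x+y}$ alone at some earlier $u_k(x+y)$, may die after $s$, and the recursion then continues). So the quantity to control is not ``$\sigma(y)$ versus $t(y)$ after the shift'' at all; it is how long after $s$ the recursion for $x+y$ takes to terminate, given that an infinite-progeny space-time point $(x+y,s)$ is available nearby to re-ignite it.

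This is exactly what the paper's proof supplies and your sketch lacks: the decomposition on $\{K(x+y)\le\sqrt t/\kappa\}$, the bound of Lemma~\ref{vimoinsui} on the excursion lengths $v_i(x+y)-u_i(x+y)$, and above all the ``bad growth points'' box of Lemmas~\ref{boite} and~\ref{controleboite}, run around $x+y$ at spatial/temporal scale $\sqrt t$ starting at time $s$. The box lemma is what converts the existence of the surviving point $(x+y,s)$ into the statement that each subsequent regeneration gap $u_{i+1}(x+y)-v_i(x+y)$ is at most $\kappa\sqrt t$, and its probability cost is where Theorems~\ref{petitsamas} and~\ref{au-lineaire} (together with the Richardson upper bound) actually enter; the invariance of $\Pbarre$ under $\tilde\theta$ is then used to reduce $N_t(x+y,\sqrt t)\circ\theta_s$ to $N_t(0,\sqrt t)$. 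Your ingredients (Lemma~\ref{K} for $K$, Theorem~\ref{petitsamas} for the excursions, Theorem~\ref{ergo} for the shift, and the ``$K$ times scale $\sqrt t$'' mechanism for the stretched exponential) are all needed, but without a lemma of the type of Lemma~\ref{boite} you have no handle on the regeneration gaps $u_{i+1}-v_i$, which depend on the whole configuration at time $v_i$ and are precisely the hard part; also note that a geometric number of independent exponential-tailed summands would by itself give an exponential, not a stretched-exponential, tail, so the $\sqrt t$ really comes from the scale choice in the box construction, not from that summation alone.
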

We also provide, in Section~\ref{Sssadd}, a control for the difference between the hitting time $t$ and the essential hitting time $\sigma$:  
\begin{restatable}{theorem}{thmdifferenceuniforme}
\label{differenceuniforme}
If $(\xi_t)$ is supercritical, then, $\Pbarre$ almost surely, it holds that \[\lim_{\|x\|\to+\infty} \frac{|\sigma(x)-t(x)|}{\|x\|}=0.\]
\end{restatable}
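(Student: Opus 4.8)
The plan is to reduce the statement to a single uniform tail estimate and then conclude by Borel--Cantelli. First observe that $t(x)$ is, by definition, the first time $x$ is alive in $\xi^0$, while $\sigma(x)=u_{K(x)}(x)$ is one particular such time; hence $t(x)\le\sigma(x)$ always. Moreover, under $\Pbarre$ the at-least-linear-growth estimate (Theorem~\ref{au-lineaire}, applied with $f=\d_0$ and $t\to\infty$) forces $t(x)<\infty$ almost surely, and Lemma~\ref{K} forces $K(x)<\infty$, hence $\sigma(x)<\infty$, almost surely. Thus $|\sigma(x)-t(x)|=\sigma(x)-t(x)\ge0$, and it suffices to bound this quantity from above: the goal is constants $A,B>0$ with, for every $x\in\Z^d$ and every $t>0$,
\[
\Pbarre\big(\sigma(x)-t(x)\ge t\big)\le A\exp\big(-B\sqrt t\big)
\]
(any stretched-exponential bound in $t$ would serve equally well for the sequel). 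Granting this, summing over all $x$ with $t$ replaced by $\e\|x\|$ gives $\sum_{x\in\Z^d}\Pbarre(\sigma(x)-t(x)\ge\e\|x\|)\le\sum_{n\ge1}Cn^{d-1}A\exp(-B\sqrt{\e n})<\infty$, so by Borel--Cantelli, $\Pbarre$-almost surely only finitely many $x$ satisfy $\sigma(x)-t(x)\ge\e\|x\|$. Intersecting over $\e=1/m$, $m\in\N^*$, yields $\limsup_{\|x\|\to\infty}(\sigma(x)-t(x))/\|x\|\le0$, which together with $\sigma(x)-t(x)\ge0$ is exactly the claim.

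To prove the uniform tail estimate I would unfold the inductive definition of $\sigma$. On $\{t(x)<\infty\}$ one has $u_1(x)=t(x)$, hence
\[
\sigma(x)-t(x)=u_{K(x)}(x)-u_1(x)=\sum_{k=1}^{K(x)-1}\Big(\big(v_k(x)-u_k(x)\big)+\big(u_{k+1}(x)-v_k(x)\big)\Big).
\]
If the left-hand side is at least $t$, then either $K(x)\ge\sqrt t$, or $K(x)<\sqrt t$ and one of the at most $\sqrt t$ summands is itself at least $\sqrt t$. The event $\{K(x)\ge\sqrt t\}$ is absorbed, with room to spare, by the sub-geometric tail of $K(x)$ from Lemma~\ref{K}. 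For a fixed summand, on $\{k<K(x)\}$ the term $v_k(x)-u_k(x)=\tau^x\circ\theta_{u_k(x)}$ is the \emph{finite} extinction time of the process started from $\d_x$ at the stopping time $u_k(x)$, so the strong Markov property at $u_k(x)$ and Theorem~\ref{petitsamas} bound its tail by $A\exp(-Bs)$, uniformly in $k$ and $x$. The remaining term $u_{k+1}(x)-v_k(x)$ is the time $\xi^0$ needs to re-infect $x$ after $v_k(x)$: using the strong Markov property at $v_k(x)$, the fact that $\xi^0_{v_k(x)}\neq\emptyset$ on $\{k<K(x)\}$, and the fact that a neighbour of $x$ was alive in $\xi^0$ just before the earlier time $u_k(x)$, one expects the at-least-linear-growth estimate (Theorem~\ref{au-lineaire}, applied after translating a site close to $x$ to the origin) to bound this tail by $A\exp(-Bs)$ as well, uniformly in $k$ and $x$. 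Summing these two bounds over $k\le\sqrt t$ (a polynomial factor, absorbed after decreasing $B$) and adding the $\{K(x)\ge\sqrt t\}$ contribution produces the claimed estimate.

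The main obstacle is precisely this last, uniform-in-$x$ control of the re-infection time $u_{k+1}(x)-v_k(x)$. The danger is that when the sub-cluster issued from $\d_x$ dies out at $v_k(x)$ the still-living part of $\xi^0$ has drifted far from $x$, in which case re-infecting $x$ costs a time comparable to that distance and not $O(1)$, and the bound ceases to be uniform. Turning the heuristic ``the surviving process stays probabilistically close to a site it has recently occupied'' into a genuine exponential bound --- presumably by combining, for each $k$, the strong Markov property at the last birth onto $x$ before $v_k(x)$ with the block construction of Section~\ref{Sbezgrim} and Theorems~\ref{petitsamas} and~\ref{au-lineaire} --- is where the real effort lies; the round-by-round splitting, the use of Lemma~\ref{K}, and the Borel--Cantelli step are otherwise routine.
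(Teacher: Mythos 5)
Your reduction (nonnegativity of $\sigma(x)-t(x)$, a uniform tail bound, then Borel--Cantelli along $t=\e\|x\|$) is a perfectly sound skeleton, and your decomposition of $\sigma(x)-t(x)$ into the rounds $v_k(x)-u_k(x)$ and $u_{k+1}(x)-v_k(x)$, with the first type handled by the strong Markov property at $u_k(x)$ and Theorem~\ref{petitsamas}, is exactly what the paper does (Lemma~\ref{vimoinsui}). The genuine gap is the step you yourself flag and then leave as a heuristic: the uniform-in-$x$ control of the re-infection times $u_{k+1}(x)-v_k(x)$. Applying the strong Markov property at $v_k(x)$ together with Theorem~\ref{au-lineaire} ``after translating a site close to $x$ to the origin'' does not work as stated, because at time $v_k(x)$ the whole descendance of $(x,u_k(x))$ (including that of the neighbour which infected $x$ at $u_k(x)$) has just died out, and nothing guarantees a surviving site at bounded distance from $x$; conditioning on that local extinction precisely biases the surviving mass away from $x$. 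This is not a technicality one can wave through: the paper's machinery built to handle it (the bad-growth-point events $E^y(x,t)$, the counts $N_L(x,t)$, Lemmas~\ref{boite} and~\ref{controleboite}) still needs, as an input, an infinite-lifetime living point within distance $O(t)$ of $x$ at the starting time, which is available for free in the subadditivity proof (the point $x+y$ at time $\sigma(x)+\sigma(y)\circ\tilde\theta_x$) but not at time $t(x)$. The upshot is that the paper does \emph{not} obtain your posited uniform bound $\Pbarre(\sigma(x)-t(x)\ge t)\le A e^{-B\sqrt t}$; it proves instead the logarithmically corrected estimate of Proposition~\ref{difference},
\[
\Pbarre\left(\sigma(x)\ge t(x)+K(x)\left(\alpha\log(1+\|x\|)+z\right)\right)\le A\exp(-Bz),
\]
and then concludes differently: via Minkowski and the geometric tail of $K(x)$ it gets $\Ebarre\left[|\sigma(x)-t(x)|^p\right]\le C(p)\left(\log(1+\|x\|)\right)^p$, and for $p>d$ the series $\sum_x \Ebarre\left[|\sigma(x)-t(x)|^p/(1+\|x\|)^p\right]$ converges, which forces the ratio to vanish almost surely.

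Two remarks to salvage your plan. First, your Borel--Cantelli step does not actually need uniformity in $x$: from Proposition~\ref{difference} and Lemma~\ref{K} one gets, for fixed $x$, a tail of the rough form $\exp\bigl(-c\sqrt{t/\log(1+\|x\|)}\bigr)$, and with $t=\e\|x\|$ this is still summable over $\Z^d$, so your conclusion scheme goes through once a Proposition~\ref{difference}-type estimate is in hand. Second, and this is the point: that estimate is the entire content of the theorem, and your proposal does not prove it --- it replaces it by the expectation that Theorem~\ref{au-lineaire} applies ``uniformly in $k$ and $x$'' at the times $v_k(x)$, which is precisely what fails naively and what the box-of-bad-growth-points construction (with its $\log(1+\|x\|)$ loss) was designed to repair.
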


Finally, in Section~\ref{STFA}, we prove the expected asymptotic shape theorem (thanks to Theorems~\ref{au-lineaire}, \ref{ergo}, \ref{presquesousadditif}, \ref{differenceuniforme} and thanks to a result by Kesten and Hammersley):

\begin{restatable}{theorem}{thmTFA}
 \label{TFA}
If $(\xi_t)$ is supercritical, then there exists a norm $\mu$ on $\R^{d}$ such that for every $\epsilon>0$, almost surely under $\Pbarre$, for every large $t$
\[ (1-\epsilon)B_{\mu}\subset\frac{\tilde H_t}t\subset(1+\epsilon)B_{\mu}\] 
 $\tilde{H}_t=\{x\in\Z^d:t(x)\le t\}+[0,1]^d$ and $B_\mu$ is the unit ball for $\mu$. 
\end{restatable}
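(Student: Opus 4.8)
The plan is to first prove an asymptotic shape theorem for the essential hitting time $\sigma$ and then to transport it to the true hitting time $t$ via Theorem~\ref{differenceuniforme}. The starting point is the Kesten--Hammersley almost-subadditive ergodic theorem applied, in each fixed direction $x\in\Z^d\setminus\{0\}$, to the sequence $(\sigma(nx))_{n\ge1}$ under $\Pbarre$. Its three hypotheses are supplied by results already at hand: Theorem~\ref{ergo} gives the required stationarity (under $\Pbarre$, $\tilde\theta_x$ preserves $\Pbarre$, and $\sigma(nx)\circ\tilde\theta_{mx}$ is independent of $\sigma(mx)$ with the law of $\sigma(nx)$); Theorem~\ref{presquesousadditif} gives the almost-subadditivity $\sigma((m+n)x)\le\sigma(mx)+\sigma(nx)\circ\tilde\theta_{mx}+r_{m,n}$ with a nonnegative defect $r_{m,n}$ whose stretched-exponential tail makes both $\Ebarre[r_{m,n}]$ bounded and, by a Borel--Cantelli argument over dyadic scales, $\sup_{m+n\le N}r_{m,n}=o(N)$ $\Pbarre$-a.s.; and the integrability $\Ebarre[\sigma(x)]<\infty$ together with the linear bound $\Ebarre[\sigma(nx)]\le C\|x\|\,n$ follows from Lemma~\ref{K} combined with the exponential estimates of Theorems~\ref{petitsamas} and~\ref{au-lineaire} (a sub-geometric number of restarts, each of exponentially controlled duration). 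Kesten--Hammersley then yields, for every $x\in\Z^d$, a finite constant $\mu(x)=\lim_n\frac1n\Ebarre[\sigma(nx)]$ with $\frac{\sigma(nx)}{n}\to\mu(x)$ $\Pbarre$-a.s.\ and in $L^1$.

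The second step is to check that $\mu$ extends to a norm on $\R^d$. Positive $\N$-homogeneity is immediate, extends to $\Q^d$ and then, via subadditivity, to a $1$-homogeneous function on $\R^d$; subadditivity $\mu(x+y)\le\mu(x)+\mu(y)$ is obtained by passing to the limit in Theorem~\ref{presquesousadditif}, the defect being sublinear; symmetry $\mu(x)=\mu(-x)$ follows from the remark that $\sigma(x)$ and $\sigma(-x)$ are identically distributed. Finiteness $\mu(x)<\infty$ is the linear upper bound above, while strict positivity $\mu(x)\ge c\|x\|>0$ for $x\ne0$ rests on the finite speed of propagation of the CPA (the birth rates are bounded by $\lambda_\infty$, so the living region at time $t$ is contained in a ball of radius $O(t)$, whence $t(y)\ge c\|y\|$ and \emph{a fortiori} $\sigma(y)\ge c\|y\|$). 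Thus $\mu$ is equivalent to the Euclidean norm and $B_\mu$ is a genuine convex body.

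The third, and main, step is to upgrade the directional limits into the single uniform statement $\lim_{\|x\|\to\infty}\frac{\sigma(x)-\mu(x)}{\|x\|}=0$ $\Pbarre$-a.s. Here I would follow the classical route: fix a finite net of directions that is $\e$-dense in the unit sphere of $\mu$, use the a.s.\ convergence of $\sigma(\cdot)/n$ along the finitely many rational directions of the net, and interpolate to an arbitrary $x$ by writing $x$ as an integer multiple of a net direction plus a bounded remainder; the remainder is absorbed using the almost-subadditivity of $\sigma$ and the fact that $\sup_{\|z\|\le R}\sigma(z)=o(\|x\|)$ $\Pbarre$-a.s.\ (another Borel--Cantelli argument, using the exponential tails of Theorems~\ref{petitsamas} and~\ref{au-lineaire} applied to the finitely many small $z$'s at each scale), with uniform continuity of $\mu$ closing the estimate. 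This interpolation is the delicate part, precisely because $\sigma$ is only \emph{almost} subadditive and because stationarity holds only under the random translations $\tilde\theta_x$, so the accumulated error terms must be summed carefully across scales; I expect this to be the main obstacle.

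Finally, combining the uniform convergence for $\sigma$ with Theorem~\ref{differenceuniforme} gives $\lim_{\|x\|\to\infty}\frac{t(x)-\mu(x)}{\|x\|}=0$ $\Pbarre$-a.s. The announced shape theorem then follows from a standard deterministic argument: given $\e>0$, $\Pbarre$-a.s.\ there is $R$ such that $(1-\e)\mu(x)\le t(x)\le(1+\e)\mu(x)$ for $\|x\|\ge R$; hence for all large $t$, every $x$ with $\mu(x)\le(1-\e)t$ and $\|x\|\ge R$ lies in $H_t$ while every $x\in H_t$ with $\|x\|\ge R$ satisfies $\mu(x)\le(1+\e)t$ (the finitely many $x$ with $\|x\|<R$ being negligible after rescaling). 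Dividing by $t$ and adding the unit cube $[0,1]^d$ to pass from $\Z^d$ to $\R^d$ yields $(1-\e)B_\mu\subset\tilde H_t/t\subset(1+\e)B_\mu$ for every large $t$, $\Pbarre$-almost surely, which is the claim.
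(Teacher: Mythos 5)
Your proposal is correct and follows essentially the same route as the paper: directional convergence of $\sigma(nx)/n$ via the Kesten--Hammersley almost-subadditive theorem (using Theorems~\ref{ergo} and~\ref{presquesousadditif} and the moment bounds), extension of $\mu$ to a norm, upgrade to uniform convergence by an $\e$-net/uniform-continuity argument for $\sigma$ (the paper's Corollary~\ref{uniformesigma} and Step~4 of Theorem~\ref{metaTFA}), transfer from $\sigma$ to $t$ by Theorem~\ref{differenceuniforme}, and the standard deterministic passage to the shape statement. The only nuance worth noting is that almost sure (rather than $L^2$) convergence along a fixed direction already requires the regularity condition $\limsup_n\sup_{|n-p|\le\e n}|X_n-X_p|/n\le C\e$ appearing in the paper's statement of Kesten--Hammersley, which is exactly the uniform-continuity estimate your third step develops, so your argument is complete once that estimate is proved.
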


\section{About the survival}\label{Ssurvival}

For the classical contact process, Harris defined the critical value:
\[\l_c=\inf\left\{\lambda\ge0,~\P_\l\left(\forall t>0,~\xi_t^{\{0\}}\neq\emptyset\right)>0\right\} \]
and proved that $\l_c\in(0,+\infty)$.
In his model, Krone defined a similar value:
\[ \l_c(\g)=\inf\left\{\lambda\geq0,~\P_{\l,\g}\left(\forall t>0,~\xi_t^{\{0(2)\}}\neq\emptyset\right)>0\right\} \]
where $0(2)$ means the site $0$ in the adult state (state $2$). He showed that if $\g$ and $\l$ are sufficiently large, then $\P_{\l,\g}(\forall t>0,~\xi_t^{\{0(2)\}}\neq\emptyset )$ is positive. Therefore, $\l_c(\g)$ is not trivial for $\g$ sufficiently large. 
He drew the look of the curve $\l_c(\g)$ and obtained a partition of $(\R^+)^2$ in two regions: survival and extinction.

We generalize this for our model. Recall that $\rho(\Lambda,\gamma)=\P_{\Lambda,\g}\left( \forall t>0,~\xi_t\not\equiv 0\right)$ and
\[ S_{\g}=\left\{\Lambda\in\R^\N,\text{ non decreasing }\left/\right.\P_{\Lambda,\g}\left(\forall t>0,~\xi_t\not\equiv 0\right)>0\right\}\]
the supercritical region.

We also recall that if it is not specified, the initial set of our process is the site $0$ with age $1$. The aspects of this region and its frontier depend on the chosen topology. Let us make some obvious remarks about it. Let $\Lambda=(\lambda_i)_i$ and $\Lambda'=(\l_i')_i$; if $\Lambda\in{S_\g}$ and for all $i$, $\l_i\leq\l_i'$ then $\Lambda'\in{S_\g}$. If we take for all $i$, $\lambda_i> \lambda_c$, then the process is supercritical; but this assumption is too strong for our model. We want to say something more relevant so we show:

\begin{proposition} For every $m\in\N$, for fixed $\lambda_1,\ldots,\lambda_m$, we can find $\lambda_{m+1}$ and $\gamma$ large enough such that $\rho(\Lambda,\gamma)>0$ with $\Lambda=(\lambda_1,\ldots,\lambda_{m+1},\lambda_{m+1},\ldots)$. \end{proposition}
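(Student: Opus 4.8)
The plan is to reduce the claim to the survival of the \emph{classical} supercritical contact process by a coupling argument that only uses the ``oldest'' particles. Fix $m$ and the values $\lambda_1,\dots,\lambda_m$; we are free to choose $\lambda_{m+1}$ and $\gamma$, and by monotonicity in $\Lambda$ (the ``obvious remark'' that $\Lambda\le\Lambda'$ and $\Lambda\in S_\gamma$ imply $\Lambda'\in S_\gamma$) we may as well take $\lambda_{m+1}$ as large as convenient, say $\lambda_{m+1}=N$ for a constant $N$ to be fixed later. The idea is that a particle which manages to reach age $m+1$ has birth parameter $N$ forever after, so the sub-process consisting of particles of age $\ge m+1$ should behave like a classical contact process with parameter $N$, \emph{provided} particles typically age up to $m+1$ before dying — and this is exactly what large $\gamma$ buys us, since a living site ages at rate $\gamma$ and dies at rate $1$, so it reaches age $m+1$ before dying with probability $(\gamma/(\gamma+1))^{m}\to1$ as $\gamma\to\infty$.

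Concretely, I would set up the comparison on the Harris graphical representation of Section~\ref{model}. First choose $N=\lambda_{m+1}$ large enough that the classical contact process on $\Z^d$ with infection rate $N$ is supercritical, i.e.\ $N>\lambda_c$; in fact I would take $N$ large enough that even a contact process with infection rate $N$ but an extra killing mechanism (removing a site with some small probability at each would-be reproduction, to account for offspring that die before maturing) is still supercritical — this is possible by continuity/monotonicity of $\lambda_c$, or simply by taking $N$ much larger than $\lambda_c$. Then I would define a ``thinned'' process: declare a site $x$ \emph{good} at time $t$ if $\xi_t(x)\ge m+1$, and track only good sites. A good site gives birth through an edge at rate at least $\lambda_{m+1}=N$ (its age is $\ge m+1$ and $(\lambda_i)$ is nondecreasing). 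Each offspring is created with age $1$; using the aging Poisson clock $\omega^\gamma_x$ and the death clock $\omega^1_x$ at the offspring's site, with probability $p_\gamma:=(\gamma/(\gamma+1))^{m}$ it climbs to age $m+1$ (becoming good) before its death clock rings, and these events are independent across offspring given the graphical data. Hence the set of good sites dominates a contact-type process with effective infection rate $N$ and an i.i.d.\ $p_\gamma$-thinning of births, and with killing rate $1$. Choosing $\gamma$ large makes $p_\gamma$ as close to $1$ as we like, so for $N$ fixed large and $\gamma$ large this dominated process is supercritical. Since the process started from $0$ at age $1$ reaches a good configuration with positive probability (with positive probability the origin ages to $m+1$ before dying), it follows that $\rho(\Lambda,\gamma)>0$.

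For the domination step I would invoke the standard fact that an attractive spin system whose birth/death rates dominate those of a supercritical contact process survives with positive probability — this can be phrased either directly as a stochastic domination of the good-site process by a genuine contact process with rate $N$ subject to i.i.d.\ edge-thinning with parameter $p_\gamma$ (which is itself, for $p_\gamma$ close to $1$, a supercritical contact process on $\Z^d$, using that $\lambda_c$ depends continuously on such a thinning and $Np_\gamma>\lambda_c$), or, if one prefers to avoid any new percolation input, by directly running the Bezuidenhout--Grimmett block argument — but since we are only asked for \emph{positivity} of $\rho$, not for exponential estimates, the soft stochastic-domination route is cleanest. The key quantitative choice is the order of quantifiers: first fix $N=\lambda_{m+1}$ large (depending only on $d$ and $\lambda_c$, not on $\gamma$), then fix $\gamma$ large (depending on $N$, $m$, $d$) so that $p_\gamma N$ exceeds the critical value of the thinned contact process.

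The main obstacle is making the coupling between the good-site process and an honest contact process rigorous while respecting the dependence structure of the Harris construction. The subtlety is that whether a newly born particle matures to age $m+1$ before dying depends on the aging and death clocks at its site \emph{during the interval when it is young}, and one must check that, conditionally on the graphical configuration outside these short ``maturation windows,'' these maturation events are independent with probability $p_\gamma$ and that a matured particle then behaves (as a source of births and as something that dies at rate $1$) exactly like a particle in a classical rate-$N$ contact process. A careful but routine bookkeeping on the Poisson clocks $(\omega^1_x,\omega^\gamma_x,\omega^\infty_e)$ — essentially exploiting the strong Markov property and the independence of disjoint time intervals of a Poisson process — handles this; attractivity (monotonicity) of the CPA established in Section~\ref{model} ensures that ignoring the contribution of particles of age $2,\dots,m$ only decreases the process, so the domination goes the right way.
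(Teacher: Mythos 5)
There is a genuine gap at the key step, namely the claim that the set of ``good'' sites (age $\ge m+1$) \emph{dominates} a contact process with rate $N$ subjected to an i.i.d.\ $p_\gamma$-thinning of births. The natural coupling gives the domination in the wrong direction. In the thinned contact process, a successful infection makes the target site infectious \emph{instantly}; in the CPA, the corresponding newborn becomes a birth-giving (good) site only after a strictly positive maturation delay (a sum of $m$ exponential clocks of rate $\gamma$), during which it emits no arrows with rate $\lambda_{m+1}$ and, moreover, occupies the site so that further arrows arriving there are wasted rather than constituting fresh independent trials as they would in the thinned process. Thus the good-site process is a contact process \emph{with delayed infectiousness and blocked re-infections}, which is stochastically \emph{below}, not above, the thinned contact process; supercriticality of the latter therefore says nothing about survival of the former. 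Arguing that the delay is ``small for large $\gamma$'' does not rescue the soft route: survival is not preserved under such perturbations by any elementary monotone coupling, and the dependence between the success of an arrow and the subsequent death clock of the newly created particle is a further obstruction to identifying the comparison process with an honest contact process of rate $Np_\gamma$. This is precisely why results of this type (already for Krone's two-stage model) are proved by renormalization rather than by direct domination.

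The paper's proof supplies the missing mechanism: it discretizes time and compares with a $1$-dependent oriented \emph{site} percolation. A site $x$ is declared good at time $nT$ if no death occurs at $x$ during $[nT,nT+\tfrac{3T}{2}]$, at least $m$ maturations occur during $[nT+\tfrac{T}{2},nT+T]$, and birth arrows to all $2d$ neighbours occur during $[nT+T,nT+\tfrac{3T}{2}]$; choosing $T$ small, then $\gamma$ and $\lambda_{m+1}$ large makes this probability close to $1$, and a contour argument for oriented percolation yields survival. The time windows in this block event are exactly what absorbs the maturation delay that your coupling ignores. Note also the order of choices differs: in the paper $\lambda_{m+1}$ is taken large \emph{after} $T$ is fixed (so that an arrow appears in a window of length $T/2$ with high probability), whereas your proposal fixes $\lambda_{m+1}$ first via $\lambda_c$; that order is only viable if the domination step were valid. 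To repair your argument you would essentially have to insert a block construction of this kind (or an equivalent finite space-time condition), at which point it becomes the paper's proof.
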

\begin{proof} We start the process with the point $0$ with age one. Without lost of generality, we assume that $\lambda_1=\cdots=\lambda_m=0$.

We use a comparison between our model of contact process with aging and a 1-dependent oriented site percolation (on $\Z\times\N)$. The construction is similar to Harris' proof of the survival for the contact process when $\l$ is large enough (\cite{harris74}). Let $T>0$ and $n\in\N$. We say that $x$ is good at time $nT$ if
\begin{enumerate}
 \item \label{nodeath} there is no death at $x$ during $[nT, nT+\frac{3T}{2}]$,
 \item \label{mature} there are enough maturations (more than $m$) at $x$ during $[nT+\frac{T}{2}, nT+T]$,
 \item \label{birth} there are arrows from $x$ to each of its neighbors on $[nT+T, nT+\frac{3T}{2}]$.
\end{enumerate}
At time $nT+T$ we do not know the age of $x$. If we wait $m$ maturations, we are sure that it is possible to use the birth arrows of rate $\lambda_{m+1}$ to give birth to the neighbors.

For $\alpha\in\{1,\gamma,\lambda_{m+1}\}$, we denote by $P^\alpha$ a Poisson process of parameter $\alpha$. One has
\begin{align*}
\P(x\text{ is good at }nT)&=\P(\text{\ref{nodeath}, \ref{mature} and \ref{birth} are satisfied})\\
 &=\P\left(P^{1}\big([nT, nT+\frac{3T}{2}]\big)= 0\right)\times \P\left(P^\gamma\big([nT+\frac{T}{2}, nT+T]\big)\geq m\right)\\
 &\quad\times\P\left(P^{\lambda_{m+1}}\big([nT+T, nT+\frac{3T}{2}]\big)\neq 0\right)^{2d}\\
 &\geq \exp(-\frac{3T}{2})\P\left(P^\gamma\big([0, \frac{T}{2m}\big)\neq 0\right)^m\big(1-\exp\big(-\l_{m+1}\frac{T}{2}\big)\big)^{2d}\\
  &\geq \exp(-\frac{3T}{2})\left(1-\exp\big(-\gamma\frac{T}{2m}\big)\right)^m \big(1-\exp\big(-\l_{m+1}\frac{T}{2}\big)\big)^{2d}.
\end{align*}
We pick $T$ small enough such that $\exp(-\frac{3T}{2})>1-\epsilon$. We pick $\gamma$ large enough such that $\left(1-\exp\left(-\gamma\frac{T}{2m}\right)\right)^m\ge 1-\epsilon$, and $\l_{m+1}$ large enough such that $1-\exp\left(-\l_{m+1}\frac{T}{2}\right)\geq 1-\epsilon$.
Then
\[\P(x\text{ is good at }nT)\geq 1-3\epsilon.\]
Let $\mathcal{L}=\{(m,n)\in\Z\times\N, m+n \text{ is even}\}$ be the lattice of oriented percolation. We say that $(m,n)\in\mathcal{L}$ is open if the site $(m,0,\ldots,0)\in\Z^d$ is good at time $nT$. If $\epsilon$ is sufficiently small, the probability $p$ that the site is open is sufficiently large and we have percolation in the lattice $\mathcal{L}$ (by a contour argument, see~\cite{durrettperco} for details). The construction implies that $(\xi_t)$ survives with positive probability.
\end{proof}

Now, we prove that survival does not depend on the initial function (with finite support).
\begin{proposition}
 Let $f,f':\Z^d\rightarrow\N$ be functions with non empty finite supports. One has the equivalence
 \[\P_{\Lambda,\g}\left(\forall t>0,~\xi_t^f\not\equiv 0 \right)>0\Leftrightarrow \P_{\Lambda,\g}\left(\forall t>0,~\xi_t^{f'}\not\equiv 0 \right)>0.\]
\end{proposition}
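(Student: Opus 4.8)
The plan is to reduce everything to survival from a single site of age one: I will show that for every $g\colon\Z^d\to\N$ with non-empty finite support one has $\P_{\Lambda,\g}(\tau^g=\infty)>0\iff\P_{\Lambda,\g}(\tau^{\delta_0}=\infty)>0$, and then apply this to $g=f$ and to $g=f'$ to get the stated equivalence.

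One implication is immediate from attractivity and translation invariance: choosing $x_0\in\supp g$ we have $g\ge\delta_{x_0}$, hence $\xi_t^g\ge\xi_t^{\delta_{x_0}}$ for all $t$, so $\{\tau^{\delta_{x_0}}=\infty\}\subset\{\tau^g=\infty\}$ and $\P(\tau^g=\infty)\ge\P(\tau^{\delta_{x_0}}=\infty)=\P(\tau^{\delta_0}=\infty)$ since $\P_{\Lambda,\g}$ is stationary under $T_{x_0}$. For the reverse implication I would first peel off a single site using additivity: from $\xi_t^g=\bigvee_{x\in\supp g}\xi_t^{g(x)\delta_x}$ one gets $A_t^g=\bigcup_{x\in\supp g}A_t^{g(x)\delta_x}$, and because $\supp g$ is finite and the empty configuration is absorbing, $\tau^g\le\max_{x\in\supp g}\tau^{g(x)\delta_x}$; thus $\{\tau^g=\infty\}\subset\bigcup_{x\in\supp g}\{\tau^{g(x)\delta_x}=\infty\}$. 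So if $\P(\tau^g=\infty)>0$, some term has positive probability, and by translation invariance $\P(\tau^{k\delta_0}=\infty)>0$ for some integer $k=g(x)\ge1$. It remains to pass from a single site of age $k$ to a single site of age one, which is the only delicate point.

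For this I would use a restart argument on the graphical construction. Start from $\delta_0$, let $W=W_{k-1}^0$ be the time of the $(k-1)$-th maturation mark at the origin (a finite $(\mathcal F_t)$-stopping time, and $W=0$ if $k=1$), and let $E$ be the event that $\{0\}\times[0,W]$ carries no death mark. On $E$ the origin stays alive on $[0,W]$ and has undergone exactly $k-1$ maturations, so $\xi_W^{\delta_0}\ge k\delta_0$; moreover $\P(E)>0$, being the probability that the rate-$\g$ maturation clock at $0$ rings $k-1$ times before the rate-$1$ death clock rings once. By the trajectorial semigroup property (in its strong Markov form) and attractivity, on $E$ we have $\xi_{W+s}^{\delta_0}(\w)=\xi_s^{\xi_W^{\delta_0}(\w)}(\theta_W\w)\ge\xi_s^{k\delta_0}(\theta_W\w)$ for all $s\ge0$, so that $E\cap\{\tau^{k\delta_0}\circ\theta_W=+\infty\}\subset\{\tau^{\delta_0}=+\infty\}$. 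Since $E\in\mathcal F_W$ and, by the strong Markov property, $\tau^{k\delta_0}\circ\theta_W$ is independent of $\mathcal F_W$ with the law of $\tau^{k\delta_0}$, we obtain $\P(\tau^{\delta_0}=+\infty)\ge\P(E)\,\P(\tau^{k\delta_0}=+\infty)>0$. Chaining the two equivalences through $\delta_0$ proves the proposition. The main obstacle, as indicated, is exactly this last coupling step; all the other reductions are routine consequences of monotonicity, additivity and translation invariance.
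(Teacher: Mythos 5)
Your proposal is correct and takes essentially the same approach as the paper: reduce to a single site via additivity, attractivity and translation invariance, and eliminate the dependence on the initial age by restarting at maturation times of the graphical construction together with the (strong) Markov property. The only cosmetic difference is that you wait for the $(k-1)$-th maturation in one step, whereas the paper passes from age $n$ to age $n+1$ one maturation at a time.
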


\begin{proof}
First, if we start with only one living point, then let us see that the survival does not depend on its age. Let $x\in\Z^d$ and $n\in\N^*$. By construction,
\[
\P\big(\forall t>0,~\xi_t^{n\delta_x}\not\equiv 0 \big)\leq \P\left(\forall t>0,~\xi_t^{(n+1)\delta_{x}}\not\equiv 0 \right).
\]

Conversely, let $T$ be the time of the first maturation on $x$. One has
\begin{align*}
\P_{\Lambda,\g}\big(\forall t>0,~ \xi_t^{n\delta_x}\not\equiv 0 \big) &\geq \P\left(\left\{\forall t\in[0,T],~\xi_t^{n\delta_x}(x)\neq 0 \right\}\cap \left\{\theta_T\left(\forall t>0,~\xi_t^{(n+1)\delta_x}\right)\not\equiv 0  \right)\right\}\\
&\geq \P_{\Lambda,\g}\left(\exp(\delta)\leq \exp(1)\right)\P_{\Lambda,\g}\left(\theta_T\left( \forall t>0,~\xi_t^{(n+1)\delta_x}\not\equiv 0\right)\right)\\
&\geq \frac{\d}{1+\d} \P_{\Lambda,\g}\left(\forall t>0,~\xi_t^{(n+1)\delta_x}\not\equiv 0 \right).
\end{align*}
Secondly, the survival does not depend on the finite number of initial living points. Indeed, for all $x_1,\ldots,x_n\in\Z^d$ and $m_1,\ldots,m_n\in\N^*$, one has
\begin{align*}
\P\left(\forall t>0,~\xi_t^{\sum_{i=1}^n m_i\delta_{x_i}}\not\equiv 0 \right)&\leq \sum_{i=1}^n  \P\left(\forall t>0,~\xi_t^{m_i\delta_{x_i}}\not\equiv 0 \right)~\text{ by additivity},\\
& \leq  n\P\left(\forall t>0,~\xi_t^{\max_i\{m_i\}\delta_{x_{\arg\max\{m_i\}}}}\not\equiv 0 \right),\\
\text{and }~~\P\left(\forall t>0,~\xi_t^{\sum_{i=1}^n m_i\delta_{x_i}}\not\equiv 0 \right)&\geq  \P\left(\forall t>0,~\xi_t^{m_1\delta_{x_1}}\not\equiv 0 \right)\text{ by attractivity}.\qedhere
\end{align*}
\end{proof}

Note that if we have survival, then $\rho(\Lambda,\gamma)>0$ and for every $f:\Z^d\to\N$ not identically null we have, by monotonicity:
\begin{align}
\label{survival}\P_{\Lambda,\g}\left(\forall t>0,~\xi_t^f\not\equiv 0\right)\geq \rho(\Lambda,\gamma)>0.\end{align}
Whenever the context is clear ($\Lambda,\gamma$ fixed) we will just write $\rho=\rho(\Lambda,\gamma)$.

\section{Construction of a percolation background process}\label{Sbezgrim}
To obtain the exponential bounds of Theorems~\ref{petitsamas} and~\ref{au-lineaire}, we construct a background percolation process for the supercritical contact process with aging, just as Bezuidenhout and Grimmett did in~\cite{bezgrim} for the classical contact process (we also refer to~\cite{lig99} and~\cite{steifwar}). We work with the contact process with aging $(\xi_t)$, and also with its support $(A_t)$. The process $(A_t)$ looks like the classical contact process (it takes values in $\{0,1\}^\N$ and encodes the fact that a given particle is alive or not) but it is not Markovian. We formulate a lot of propositions about the process $(A_t)$ when we only need to know the number of living particles regardless their age. However, the Markovian aspect of $(\xi_t)$ is essential in the different proofs. In this section, we fix $\Lambda$ and $\gamma$ and we write $\P$ for $\P_{\Lambda, \gamma}$. We show 
\begin{theorem}
\label{FST}
The supercriticality of the process $(\xi_t)$ is equivalent to the following {\bf finite space-time condition}:  
\begin{align*}
&\forall\epsilon>0,\exists (n,a,b)\text{ such that if }(x,s)\in[-a,a]^d\times[0,b]\text{ then}\\
\label{B1}\tag{B1}
&\P\left( \begin{array}{c}
          \exists(y,t)\in[a,3a]\times[-a,a]^{d-1}\times[5b,6b]\text{ and open paths staying}\\
           \text{in }[-5a,5a]^d\times[0,6b]\text{ and going from }(x,s)+[-n,n]^d\times\{0\}\\
           \text{ to every point in } (y,t)+[-n,n]^d\times\{0\}
         \end{array}
\right)> 1-\epsilon.
\end{align*}
\end{theorem}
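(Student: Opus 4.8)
The strategy is to adapt the Bezuidenhout–Grimmett construction for the classical contact process, as carried out in~\cite{bezgrim} and~\cite{lig99}, to our setting. One implication is essentially immediate: if the finite space-time condition~\eqref{B1} holds, then starting from a single live site we can with positive probability reach, within a bounded space-time box, an arbitrarily large configuration of live sites concentrated near $(y,t)$; iterating this along a supercritical block construction and appealing to the attractivity of the process (noting that age-$1$ sites are the ``weakest'', so reaching every point of $(y,t)+[-n,n]^d\times\{0\}$ in the diagram produces a configuration dominating the relevant block) forces survival with positive probability, hence $\Lambda\in S_\gamma$. I would spell this out first, since it only uses monotonicity and the semigroup property stated in the excerpt.

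The substantial direction is that supercriticality implies~\eqref{B1}. I would argue by contradiction, following Bezuidenhout–Grimmett closely. Assume~\eqref{B1} fails: then for every choice of box parameters there is a positive chance that the set of sites reachable by open paths, suitably restricted to a finite window, is ``small''. The heart of the argument is a sequence of lemmas showing that, conditionally on survival, the live set must spread out in space and time. One first shows that if the process survives with positive probability then, by passing to a large time, the live set $A_t$ is with high probability large (contains many sites), and that it is ``spread out'' in the sense that it reaches far from the origin — this uses the fact that survival of $(\xi_t)$ is equivalent to survival from any nonempty finite configuration (the second Proposition of Section~\ref{Ssurvival}) together with the $0$–$1$ type dichotomy one gets from the FKG inequality~\eqref{FKG} and translation invariance under $\theta_t$ and $T_x$. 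Then one uses a ``restart'' argument: if the process survives, then with probability bounded below it survives while staying inside a large but finite spatial slab, and one can pump this up to the full finite space-time statement by choosing $a$, $b$, $n$ large enough and invoking the strong Markov property to glue restarts.

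The technical core — and the step I expect to be the main obstacle — is the passage from ``the process survives, so it is somewhere far away at large times'' to ``the process can be made to reach a large fully-live block inside a prescribed finite space-time box''. In the classical case this is the delicate part of~\cite{bezgrim}: one must control the geometry of the surviving cluster, rule out that it only survives by escaping to infinity thinly, and produce the uniform-in-$(x,s)$ lower bound on the probability in~\eqref{B1}. For the contact process with aging there is an extra bookkeeping difficulty: the process is $\N$-valued rather than $\{0,1\}$-valued, and the support process $(A_t)$ is not Markovian, so one cannot directly restart $(A_t)$; instead one must restart the genuine Markov process $(\xi_t)$, and then note — using conditions~\ref{croissante} and~\ref{fini} on $\Lambda$ — that an open path in the graphical representation can always be realized through age-$1$ particles at the cost of waiting for enough maturation circles, exactly as exploited in the Proposition of Section~\ref{Ssurvival}. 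I would therefore carry the whole Bezuidenhout–Grimmett machinery at the level of $(\xi_t)$, tracking ages only where the graphical definition of open path forces it, and I expect the verification that each of their lemmas goes through verbatim once ``reach a live site'' is replaced by ``reach a live site of age~$1$'' to be the longest part of the argument.
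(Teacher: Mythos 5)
The direction ``supercritical $\Rightarrow$ \eqref{B1}'' is where your plan has a genuine gap, and the one concrete bridge you propose is circular. The paper's argument (following Bezuidenhout--Grimmett) is not a proof by contradiction: it shows directly that, conditioned on survival, the number of living space-time points on the top face and on the lateral faces of a large box $[-L,L]^d\times[0,T]$ must tend to infinity. The mechanism is the conditional estimate that on the event of having at most $k$ such boundary points (separated by at least one time unit on each lateral time line) one has $\P\bigl(\text{extinction}\mid\mathcal{F}_{L,T}\bigr)\ge c^{k}$ for a fixed $c>0$, combined with martingale convergence (Lemmas~\ref{etape1} and~\ref{etape2}); the FKG inequality (Lemmas~\ref{FKG1} and~\ref{FKG2}) then localizes many points into a single orthant; one then picks $N$ points pairwise separated by $2n+1$ so that independent trials, each succeeding with probability bounded below \emph{uniformly in the unknown age of the seed}, spawn a fully occupied block $f_n$ within one unit of time (Theorem~\ref{C1etC2}); finally \eqref{C1} and \eqref{C2} are glued by the strong Markov property (Lemma~\ref{lemC}) and steered into the oriented, uniform-in-$(x,s)$ geometry of \eqref{B1} (Lemma~\ref{Cbloc}). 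None of this quantitative machinery appears in your plan; in its place you invoke ``if the process survives, then with probability bounded below it survives while staying inside a large but finite spatial slab''. Slab survival is exactly the kind of statement the block construction is designed to \emph{deliver} (it is what Theorem~\ref{construction} yields), so it cannot serve as an input here without circularity. Your concern about realizing open paths through age-one particles is also somewhat misdirected: since $(\lambda_i)$ is non-decreasing, a living seed of any age dominates an age-one seed, and the only point where age enters is the uniform-in-$k$ lower bound $\alpha>0$ for a seed of age $k$ to spawn $f_n$ in unit time.

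For the converse direction, ``essentially immediate'' undersells the work: the block events produced by \eqref{B1} for neighbouring macroscopic edges are not independent, and \eqref{B1} is uniform only over starting points $(x,s)$ in a small box, so one must run a dynamical renormalization that keeps track of the realized target points $Y_{j,k}$, produces a locally dependent field $(W^k_e)$ whose conditional opening probabilities are at least $1-\epsilon$ (Theorem~\ref{construction}), and then compares it with an independent supercritical oriented percolation (the paper uses Lemma~2.4 of~\cite{DOP}). Your attractivity remark correctly explains why the coupling is monotone, but without this dependence control the iteration alone does not force survival.
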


\subsection{Supercritical implies finite space-time condition}

Here we suppose that the process is supercritical and we construct a large space-time box with a lot of well-located living particles on the frontier.\\

\textbf{First step:} If $(\xi_t)$ survives with positive probability, we can increase the number of living particles (as well as their age) in the initial configuration so as to ensure that survival occurs with a probability close to one.

\begin{lemma} 
\label{survie} Assume that $(\xi_t)$ is supercritical. Let $(f_n)_n$ be an increasing sequence of functions from $\Z^d$ to $\N$ which converges to $f:\Z^d\to\N$ not identically null and periodic. Then
\[ 
\lim_{n\rightarrow\infty}\P\left(\forall t>0,~\xi_t^{f_n}\not\equiv 0\right)=1.
\] 
\end{lemma}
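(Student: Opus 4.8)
The plan is to exploit the $0$--$1$ law type dichotomy for the contact process together with a finite-time approximation argument. First I would observe that since $(\xi_t)$ is supercritical, there exists some finite configuration $g$ with $\P(\forall t>0,\ \xi_t^g\not\equiv 0)>0$; by attractivity and the periodicity of $f$, translating copies of $g$ into the (infinite, periodic) support of $f$ shows that $f$ dominates, up to translation, arbitrarily many independent-enough copies of a surviving seed. More precisely, since $f$ is periodic and not identically null, its support contains a translate of any fixed finite pattern infinitely often along a sublattice; pick $N$ disjoint far-apart boxes $B_1,\dots,B_N$ inside $\supp f$, each large enough to carry a seed from which survival has probability $\ge\rho>0$ (using~\eqref{survival}). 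The events ``the process restricted to the seed in $B_j$ survives forever'' are not independent, but one can make them so up to a small error by working on a finite space-time window first.

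The key step I would carry out is the following finite-time reduction. Fix $\epsilon>0$. Choose $T$ large. By the strong Markov property and the fact that a surviving supercritical process grows, there is $L$ and $c>0$ such that $\P(A_T^x\cap[-L,L]^d\text{ has at least }k\text{ points}\mid \tau^x=\infty)$ is close to $1$; equivalently, conditionally on survival the process at time $T$ looks like a large well-filled box with high probability. Then I would use the following standard estimate: if the process starting from $f_n$ reaches time $T$ with many living particles concentrated in a box, the conditional probability of ultimate extinction from there is at most $(1-\rho')^{(\text{number of particles})}$ for a suitable $\rho'>0$, by additivity (each particle independently spawns a surviving subprocess with probability $\ge\rho'$, using~\eqref{survival} and the graphical construction to make the seeds use disjoint parts of the percolation diagram). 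Hence $\P(\tau^{f_n}=\infty)\ge \P(\xi_T^{f_n}\text{ fills a box with }\ge k\text{ particles})\,(1-(1-\rho')^k)$. As $n\to\infty$, $f_n\uparrow f$, so $\xi_T^{f_n}\uparrow\xi_T^f$ (monotone convergence for the graphical construction on the finite window $[-L,L]^d\times[0,T]$), and since $f$ is periodic with infinite support, $\xi_T^f$ fills any fixed box with at least $k$ particles with probability as close to $1$ as we like (again by additivity: $f$ contains infinitely many seeds, so $\xi_T^f$ stochastically dominates the union of infinitely many independent birth clusters, and the probability that none of them reaches/fills the target box tends to $0$). Combining, $\liminf_n \P(\tau^{f_n}=\infty)\ge (1-\epsilon)(1-(1-\rho')^k)$, and letting $k\to\infty$ and $\epsilon\to0$ gives the claim.

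The main obstacle, and the place requiring care, is making the ``disjoint seeds survive independently'' argument rigorous without the independence that the classical contact process enjoys through its simple graphical structure: here the birth arrows along an edge at different rates $\lambda_i$ are coupled through a single Poisson process and the uniform marks $u_e$, so sub-processes started from different space-time points do not use strictly disjoint randomness. I would handle this by working not with full independence but with the FKG inequality~\eqref{FKG}: the events ``the seed in $B_j$ generates an infinite open path'' (or its finite-window surrogate ``generates an open path reaching the top of a large space-time box'') are increasing events in the graphical percolation structure, so their joint probability is at least the product of the individual probabilities. This replaces $(1-\rho')^k$ by a genuine upper bound on the probability that all $k$ seeds fail, via $\P(\bigcap_j \{\text{seed }j\text{ fails}\})\le \prod_j \P(\text{seed }j\text{ fails})$ for the decreasing events $\{\text{seed }j\text{ fails}\}$ — which is exactly FKG applied to complements. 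The finite-window truncation is needed precisely so that these are bona fide events measurable with respect to a finite slab, where the FKG inequality~\eqref{FKG} as stated applies; one then removes the truncation by a limiting argument, using that the probability of an open path escaping a large finite slab forever can be made close to the survival probability.

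One last detail worth flagging: monotonicity of $n\mapsto \xi_t^{f_n}$ and the convergence $\xi_t^{f_n}\uparrow \xi_t^f$ pointwise follow directly from attractivity ($f_n\le f_{n+1}\le f$) and from the graphical construction (for any fixed $(x,t)$, the finitely many arrows/crosses/circles relevant to determining $\xi_t^\bullet(x)$ inside a finite space-time window are eventually all ``seen'' by $f_n$ once $n$ is large, since $f_n\to f$ pointwise and only finitely many coordinates of $f$ matter on that window). This makes the passage $n\to\infty$ legitimate and completes the argument.
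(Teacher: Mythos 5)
Your overall strategy (plant many seeds in the periodic support of $f$, then argue that extinction from $f_n$ for large $n$ is unlikely because all seeds would have to fail) founders at its crucial step: you bound $\P\bigl(\bigcap_j\{\text{seed }j\text{ fails}\}\bigr)$ by $\prod_j\P(\text{seed }j\text{ fails})$ and attribute this to FKG applied to the decreasing events $\{\text{seed }j\text{ fails}\}$. But the FKG inequality~\eqref{FKG} gives \emph{positive} correlations for pairs of decreasing events just as for pairs of increasing ones (apply it to $-\1_A$ and $-\1_B$), so it yields $\P\bigl(\bigcap_j\{\text{fail}_j\}\bigr)\ge\prod_j\P(\text{fail}_j)$ --- the opposite of what you need. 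A product-type \emph{upper} bound would require genuine independence or a negative-dependence tool (a BK-type inequality for disjoint occurrence, which extinction events do not fit), and, as you yourself note, independence is not available: the subprocesses issued from different seeds are driven by the same Poisson processes and their clusters overlap in space-time; truncating to disjoint spatial boxes does not rescue it either, since a process confined to a finite box dies out almost surely, so the truncated failure probabilities tend to $1$. The same directional problem infects your earlier claim that $\xi_T^{f}$ fills a fixed box with many particles because ``the probability that none of the infinitely many seeds reaches the box tends to $0$'': that ``none'' event is again an intersection of decreasing events, which FKG only bounds from \emph{below} by the product. So the proposal has a genuine gap, not a fixable technicality inside the stated scheme.

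The paper's proof is entirely different and much softer, and it is worth seeing why it bypasses every correlation issue. By attractivity and monotone approximation along $f_n\uparrow f$, $\lim_n\P(\forall t,\ \xi_t^{f_n}\not\equiv 0)=\P(\forall t,\ \xi_t^{f}\not\equiv 0)\ge\P(\forall t,\ \xi_t\not\equiv 0)>0$. Then, because $f$ is \emph{periodic}, the survival event for $\xi^f$ is invariant under the spatial translations $T_x$ along the period lattice, and the dynamical system $(\Omega,\mathcal F,\P,T_x)$ is ergodic; hence this event has probability $0$ or $1$, and being positive it equals $1$. If you want to salvage your quantitative picture of ``many seeds'', the correct substitute for the missing independence is precisely this translation invariance plus ergodicity (or a spatial ergodic theorem applied to the family of seeds in $\supp f$), not the FKG inequality.
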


\begin{proof}
As $(f_n)$ increases to $f$, one has
\[ \lim_{n\rightarrow\infty}\P\left(\forall t,~\xi_t^{f_n}\not\equiv 0\right)=\P\left(\forall t,~\xi_t^{f}\not\equiv 0\right)\geq\P\left(\forall t,~\xi_t \not\equiv 0\right)>0.\]
Given that the system $(\Omega,\mathcal{F},\P,T_x)$ is ergodic and $f$ is periodic, we can conclude that
\[ \P\left(\forall t,~\xi_t^{f}\not\equiv 0\right)>0\Rightarrow\P\left(\forall t,~\xi_t^{f}\not\equiv 0\right)=1.\qedhere \]
\end{proof}

When we are interested in controling the amount of living particles, we will not take their ages into account. Therefore, we will work with $A_t^f=\supp \xi_t^f$.

For $L\geq 1$, we consider the truncated process $\left(\leftidx{_L}{\xi}{_t}\right)$ defined via the graphical representation: to determine $\leftidx{_L}{\xi}{_t}$ we only use paths with vertical segments above sites in $(-L,L)^d$ and horizontal segments (birth arrows) whose origins are in $(-L,L)^d$. $\leftidx{_L}{\xi}{_t}$ takes values in $[-L,L]^d$ but is measurable with respect to the Poisson processes in $(-L,L)^d\times[0,t]$. We have $\supp \leftidx{_L}{\xi}{_t}=\leftidx{_L}{A}{_t}$.

Then we define $\leftidx{_L}{A}{}=\bigcup_{t\geq 0}\left(\leftidx{_L}{A}{_t}\times\{t\}\right)\subset\Z^d\times\left[\right.0,\infty\left.\right)$ as the set of all living space-time points in the truncated process without any consideration of age.\\

\textbf{ Second step:} The probability of survival can be seen as the limit of the probability to have \textit{enough living particles in a finite space time box}.

\begin{lemma} 
\label{etape1}
For every $f:\Z^d\rightarrow\N$ with finite support and every $N\geq 1$, it holds that 
\[ \lim_{t\rightarrow\infty}\lim_{L\rightarrow\infty} \P\left(|\leftidx{_L}{A}{_t^f}|\geq N\right)=\P\left(\forall t>0,~\xi_t^{f}\not\equiv 0\right).\]
\end{lemma}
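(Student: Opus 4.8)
\textbf{Proof plan for Lemma~\ref{etape1}.}

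The plan is to prove the two inequalities separately. For the ``$\le$'' direction, I would observe that for every fixed $L$ and $t$, the event $\{|\leftidx{_L}{A}{_t^f}|\ge N\}$ is contained in $\{\leftidx{_L}{A}{_t^f}\neq\emptyset\}\subset\{A_t^f\neq\emptyset\}$ by attractivity of the truncated process (adding the truncation only removes paths). Since $\{A_t^f\neq\emptyset\}$ is decreasing in $t$ and its intersection over all $t>0$ is exactly $\{\forall t>0,\ \xi_t^f\not\equiv 0\}$, taking $L\to\infty$ and then $t\to\infty$ gives $\limsup_{t}\limsup_{L}\P(|\leftidx{_L}{A}{_t^f}|\ge N)\le\P(\forall t>0,\ \xi_t^f\not\equiv 0)$.

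For the ``$\ge$'' direction, the key point is that on the survival event the process $(A_s^f)$ must contain arbitrarily many particles at arbitrarily large times. First I would show that on $\{\forall t>0,\ \xi_t^f\not\equiv 0\}$ one has $|A_t^f|\to\infty$: if not, then on a positive-probability sub-event $|A_{t_k}^f|$ stays bounded along some sequence $t_k\to\infty$; passing to a subsequence we may assume $|A_{t_k}^f|=j$ for a fixed $j$, and then by the strong Markov property at $t_k$ and the fact that a configuration with exactly $j$ living particles dies out within a bounded time with probability bounded below (using the Poisson death clocks and the fact that with positive probability all $j$ particles die before any birth occurs, uniformly in $j$ once $j$ is fixed), a Borel--Cantelli / second-moment argument forces extinction a.s.\ on that event, a contradiction. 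Hence for any $N$, $\P(\exists t:\ |A_t^f|\ge N)\ge\P(\forall t>0,\ \xi_t^f\not\equiv 0)$, and by monotonicity in $t$ of the event $\{A_s^f\neq\emptyset\ \forall s\le t\}$ together with the fact that $|A_t^f|\ge N$ for some $t$ on the survival event, one gets $\lim_{t\to\infty}\P(|A_t^f|\ge N,\ A_s^f\neq\emptyset\ \forall s\le t)\ge\P(\forall t>0,\ \xi_t^f\not\equiv 0)$ — actually it is cleaner to argue $\P(|A_t^f|\ge N)\ge\P(\forall s>0,\ \xi_s^f\not\equiv 0)$ for all large $t$ is false pointwise, so instead I would fix a large $t_0$ with $\P(|A_{t_0}^f|\ge N)$ close to $\P(\mathrm{surv})$, and then for each such fixed $t_0$ use that $A_{t_0}^f$ is $\mathcal{F}_{t_0}$-measurable and hence $\leftidx{_L}{A}{_{t_0}^f}\uparrow A_{t_0}^f$ as $L\to\infty$ (finitely many particles each reached by a path using finitely much of the diagram), so $\lim_{L}\P(|\leftidx{_L}{A}{_{t_0}^f}|\ge N)=\P(|A_{t_0}^f|\ge N)$; letting $t_0\to\infty$ finishes it.

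The main obstacle is the intermediate claim that survival forces $|A_t^f|\to\infty$; everything else is bookkeeping with monotone convergence and attractivity. I expect this to be handled by a standard argument for additive growth models: conditionally on $|A_{t}^f|\le j$ infinitely often, one extracts a time $t$ at which there are at most $j$ particles, and since $j$ particles die out completely in the next unit of time with probability at least some $c_j>0$ (independent of everything in $\mathcal{F}_t$, by the strong Markov property — it suffices that each of the $j$ death clocks ring before any of their birth clocks do, within time $1$), an application of the second Borel--Cantelli lemma along a suitably chosen (integer-spaced, so as to get independence) sequence of such times shows extinction occurs a.s.\ on that event, contradicting survival. One must be slightly careful to choose the sequence of inspection times so that the ``death in the next unit interval'' events are independent, e.g.\ by only inspecting at integer times $n$ and taking the event that $|A_n^f|\le j$ and all particles present at time $n$ die by time $n+1$ using only clocks in $[n,n+1]$; these depend on disjoint time-slabs of the graphical construction and are therefore independent, with probability bounded below on $\{|A_n^f|\le j\}$, which suffices.
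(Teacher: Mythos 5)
Your proposal is correct and its skeleton is the same as the paper's: first identify $\lim_{L\to\infty}\P(|\leftidx{_L}{A}{_t^f}|\ge N)=\P(|A_t^f|\ge N)$ by monotone convergence of $\leftidx{_L}{A}{_t^f}\uparrow A_t^f$, then reduce everything to the claim that on the survival event $|A_t^f|\to\infty$, and conclude by splitting $\P(|A_t^f|\ge N)$ over survival and extinction (the upper bound via $\P(A_t^f\neq\emptyset)\downarrow\P(\mathrm{survival})$ is the same bookkeeping). The only genuine difference is how the key claim is established. The paper bounds the conditional extinction probability from below by $(1+\gamma+2d\lambda_\infty)^{-|A_s^f|}$ (every living particle dies before giving birth or aging) and invokes the martingale convergence theorem: since $\P(\exists u,\ A_u^f=\emptyset\mid\mathcal{F}_s)\to\1_{\{\exists u,\ A_u^f=\emptyset\}}$ a.s., on survival this forces $|A_s^f|\to\infty$ simultaneously for all real $s$. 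You use the same uniform lower bound but package it as death events in disjoint unit time slabs plus a conditional Borel--Cantelli argument; that works too, but note that your final concrete formulation (inspection only at integer times $n$) literally yields $|A_n^f|\to\infty$ along integers, which by itself does not give the stated limit over real $t$, since $t\mapsto|A_t^f|$ is not monotone. This is easily repaired inside your own framework, e.g.\ by running the argument at stopping times $T_{k+1}=\inf\{t\ge T_k+1:\ |A_t^f|\le j\}$ and applying the strong Markov property (the death events still use disjoint parts of the graphical construction), or by switching to the martingale argument, which handles all real times at once; apart from this small integer-versus-real-time mismatch, the proof is sound.
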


\begin{proof} 
Since $A_t^f=\bigcup_{L\geq 1}\leftidx{_L}{A}{^f_t}$, it follows that 
\[\lim_{L\rightarrow \infty} \P\left(|\leftidx{_L}{A}{^f_t}|\geq N\right)=\P\left(|A_t^f|\geq N\right).\] 
Let us show that the right-hand side converges to the probability of survival when $t$ goes to infinity. Let $s\in\R^+$, 
\begin{align*}
\P\left(A_t^f=\emptyset\text{ for some }t|\mathcal{F}_s\right)&\geq \P\left(\forall x\in A_s^f, x\text{ dies before giving birth or aging}|\mathcal{F}_s\right).
\end{align*}
Denote by $Y_1^x$ the waiting time until the next death on $x$, $Y_\gamma^x$ the waiting time until the next maturation and $Y_{\xi^f_s(x)}^{x,i}$ the waiting time until the next birth from $x$ to its $i^{th}$ neighbor (where $\xi^f_s(x)$ is the age of $x$ at time $s$). These random variables are independent and follow exponential laws with respective parameters $1,\gamma,\xi^f_s(x)$. One has
\begin{align*}
 \P\left(A_t^f=\emptyset\text{ for some }t|\mathcal{F}_s\right)
&\geq \prod_{x\in A_s^f}\P\left(Y_1^x\leq \min(Y_\g^x,Y^{x,1}_{\xi^f_s(x)},\ldots,Y^{x,2d}_{\xi^f_s(x)})\right)\\
&\geq \prod_{x\in A_s^f}\frac{1}{1+\g+2d\l_{\xi^f_s(x)}}\\
& \geq \left(\frac{1}{1+\g+2d\l_{\infty}}\right)^{|A_s^f|}.
\end{align*} 
By the martingale convergence theorem, the probability $\P(A_t^f=\emptyset\text{ for some }t|\mathcal{F}_s)$ converges almost surely to $\1_{\{A_t^f=\emptyset\text{ for some }t\}}$. It follows that
\[ \forall s,~A_s^f\neq\emptyset\Rightarrow \lim_{s\rightarrow\infty} |A_s^f|=\infty.\]
In other words, if the process survives, then the number of living particles goes to infinity.

\begin{align*}
\P\left(|A_t^f|\geq N\right)&= \P\left(\{|A_t^f|\geq N\}\cap\{\forall s,~A_s^f\neq\emptyset\}\right)\\
&\phantom{={}}+\P\left(\{|A_t^f|\geq N\}\cap\{\exists s,~A_s^f=\emptyset\}\right).\\
\text{So, }~\lim_{t\rightarrow\infty}\P\left(|A_t^f|\geq N\right)&= \P\left(\forall s,~A_s^f\neq\emptyset,\forall s\right).
\end{align*}
\end{proof}

\textbf{Third step:} At fixed time, we compare the number of living particles in a spatial orthant of the top of a large box according to the number of living particles on the top of this box.

From now on, we will need some symmetries assumptions on the initial configurations.
\begin{definition} An initial configuration $f:\Z^d\to\N$ is said to be acceptable if its support is finite and if
 \[\forall (x_1,\ldots,x_d),\forall i\in\{1,\ldots,d\}, f(x_1,\ldots,x_i,\ldots,x_d)=f(x_1,\ldots,-x_i,\ldots,x_d).\]
\end{definition}

\begin{lemma}
\label{FKG1}
For every $N,L\ge 0$, $t>0$ and $f$ acceptable such that $\supp f\subset(-L,L)^d$, one has
\[ \P\left(|\leftidx{_L}{A}{_t^{f}}\cap [0,L)^d|\leq N\right)\leq \P\left(|\leftidx{_L}{A}{_t^f}|\leq 2^d N\right)^{2^{-d}}. \] 
\end{lemma}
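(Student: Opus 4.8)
The plan is to exploit the symmetry of the acceptable initial configuration $f$ together with the positive spatial correlations (FKG) of $\P_{\Lambda,\gamma}$ restricted to a fixed time slice. Fix $t>0$ and $L$ with $\supp f\subset(-L,L)^d$. For each sign vector $\sigma=(\sigma_1,\dots,\sigma_d)\in\{-1,+1\}^d$, let $Q_\sigma=\{x\in\Z^d:\sigma_i x_i\in[0,L)\text{ for all }i\}$ be the corresponding (half-open) orthant of the box; these $2^d$ orthants cover $(-L,L]^d\cap\Z^d$, hence cover $\leftidx{_L}{A}{_t^f}$ up to boundary conventions, so $|\leftidx{_L}{A}{_t^f}|\le\sum_{\sigma}|\leftidx{_L}{A}{_t^f}\cap Q_\sigma|$. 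Consequently
\[
\P\big(|\leftidx{_L}{A}{_t^f}|\le 2^dN\big)\ge\P\Big(\bigcap_{\sigma}\{|\leftidx{_L}{A}{_t^f}\cap Q_\sigma|\le N\}\Big),
\]
since if every orthant contains at most $N$ living points then the total is at most $2^dN$.

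Next I would check that each event $E_\sigma:=\{|\leftidx{_L}{A}{_t^f}\cap Q_\sigma|\le N\}$ is a \emph{decreasing} event with respect to the graphical-representation configuration on $[-L,L]^d\times[0,t]$: adding birth arrows or removing death crosses can only enlarge $\leftidx{_L}{A}{_t^f}$ (this is the attractivity built into the graphical construction, restricted to the truncated process on the box), so it can only increase $|\leftidx{_L}{A}{_t^f}\cap Q_\sigma|$. Here the truncation to $(-L,L)^d$ is essential: $\leftidx{_L}{\xi}{_t^f}$ is measurable with respect to the Poisson data inside $[-L,L]^d\times[0,t]$, so the FKG inequality \eqref{FKG} applies on that finite window. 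Applying FKG (in the form for decreasing events, obtained by complementation from \eqref{FKG}) gives
\[
\P\Big(\bigcap_{\sigma}E_\sigma\Big)\ge\prod_{\sigma}\P(E_\sigma).
\]

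Finally I would use the symmetry of $f$ to identify all $2^d$ factors. Because $f$ is acceptable, it is invariant under each coordinate reflection $x_i\mapsto -x_i$, and the law $\P_{\Lambda,\gamma}$ is itself invariant under these reflections (the Poisson intensities do not depend on location and the lattice $\Z^d$ is reflection-symmetric); the truncation box $(-L,L)^d$ is also reflection-symmetric. Hence for each $\sigma$ the reflection carrying $Q_{(1,\dots,1)}$ to $Q_\sigma$ maps the event $E_{(1,\dots,1)}$ to $E_\sigma$ while preserving $\P_{\Lambda,\gamma}$, so $\P(E_\sigma)=\P(E_{(1,\dots,1)})$ for all $\sigma$. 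Therefore
\[
\P\big(|\leftidx{_L}{A}{_t^f}|\le 2^dN\big)\ge\P\big(|\leftidx{_L}{A}{_t^f}\cap[0,L)^d|\le N\big)^{2^d},
\]
and taking $2^{-d}$-th powers yields the claim. The only delicate point is the monotonicity check in the second paragraph — one must be careful that restricting to the truncated process $\leftidx{_L}{\xi}{}$ does not spoil attractivity, but since the truncation only forbids using arrows and segments outside the box (a restriction that is itself monotone in the Poisson data), the event $E_\sigma$ remains decreasing, and everything goes through.
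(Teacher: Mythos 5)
Your proposal is correct and follows essentially the same route as the paper: bound the total count by the sum over the $2^d$ orthant counts, apply FKG (positive correlations on the box) to the $2^d$ decreasing events $\{X_i\le N\}$, and use the acceptability of $f$ together with reflection invariance of $\P_{\Lambda,\gamma}$ to identify the factors. The only cosmetic difference is that you phrase monotonicity in terms of the space-time Poisson data while the paper invokes spatial positive correlations at the fixed time $t$; both are covered by the FKG property stated in \eqref{FKG}.
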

\begin{proof} Let $X_1,\ldots,X_{2^d}$ be the number of living particles in the $2^d$ different orthants of $(-L,L)^d$ at time $t$. For instance, $X_1=|\leftidx{_L}{A}{_t^{f}}\cap[0,L)^d|$. We have
\[|\leftidx{_L}{A}{_t^f}|\leq X_1+\cdots+X_{2^d},\]
so
\begin{align*}
\P\left(|\leftidx{_L}{A}{_t^f}|\leq 2^dN\right)&\geq \P\left( X_1+\cdots+X_{2^d}\leq 2^dN\right),\\
&\geq \P\left(\forall i\in \{1,\ldots,2^d\}, X_i\leq N\right).
\end{align*}
The variables $X_1,\ldots,X_{2^d}$ are identically distributed and are increasing functions of the configurations on $(-L,L)^d\times\{t\}$; therefore, by the spatial positive correlations of the measure $\P$, the FKG inequality applies and we obtain that
\[\P\left(|\leftidx{_L}{A}{_t^f}|\leq 2^dN\right)\geq \Big(\P\left(X_1\leq N\right)\Big)^{2^d}.\qedhere\]
\end{proof}

\textbf{Fourth step:} The probability of extinction can be seen as the limit of the probability to \textit{not have enough living particles on the top and the sides of a finite space-time box}.

Let $S(L,T)=\{(x,s)\in\Z^d\times [0,T]:\|x\|_{\infty}=L\}$ be the union of the lateral sides of the box $[-L,L]^d\times [0,T]$. Let $N^f(L,T)$ be the maximal number of points in $S(L,T)\cap\leftidx{_L}{A}{^f}$ satisfying the following property: if $(x,t)$ and $(x,s)$ are any distinct points with the same spatial coordinate, then $|t-s|\geq 1$.

\begin{lemma}
\label{etape2}
 Let $(L_j)$ and $(T_j)$ be two increasing sequences going to infinity. For any $M,N>0$ and $f:\Z^d\to\N$ with finite support,
 \begin{align*}
\limsup_{j\rightarrow\infty} \P\left(N^f(L_j,T_j)\leq M\right)\P\left(|\leftidx{_{L_j}}{A}{^f_{T_j}}|\leq N\right)\leq\P\left(\exists s,~\xi_s^f\equiv 0\right).
\end{align*}
 \end{lemma}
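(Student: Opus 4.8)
The plan is to combine Lemmas~\ref{etape1} and~\ref{FKG1}, together with a "restart" argument that shows: if the truncated process $\leftidx{_{L_j}}{\xi}{^f}$ has few particles on the top of the box and few (well-separated) particles on its lateral sides, then with probability bounded below the whole (untruncated) process dies out. The key point is that in that scenario essentially all the information flowing out of the box $[-L_j,L_j]^d\times[0,T_j]$ is carried by at most $N$ particles at time $T_j$ and at most $M$ particles on the sides $S(L_j,T_j)$; on each of those finitely many particles we can ask for a local "killing" event (the particle dies before giving birth or aging, exactly as in the proof of Lemma~\ref{etape1}, and for side particles also before any path re-enters the box), each of which has probability bounded below by a constant depending only on $\Lambda,\gamma,d$. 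If all these local events occur then $\xi_s^f\equiv 0$ for some $s$.

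First I would fix $j$ and condition on the $\sigma$-algebra generated by the Poisson processes inside $[-L_j,L_j]^d\times[0,T_j]$, which determines $\leftidx{_{L_j}}{\xi}{^f}$. On the event $\{N^f(L_j,T_j)\le M\}\cap\{|\leftidx{_{L_j}}{A}{^f_{T_j}}|\le N\}$, the process $\xi^f$ can only survive past the box through one of the $\le N$ sites alive at time $T_j$ or through a path that exits a lateral side; the separation condition in the definition of $N^f$ guarantees that such exiting paths are "witnessed" by at most $M$ space-time points on $S(L_j,T_j)$ that are pairwise at temporal distance $\ge 1$. Using attractivity and additivity I would bound the conditional probability of eventual extinction from below by the probability that: (i) each of the $\le N$ top particles dies before aging or giving birth (probability $\ge (1+\gamma+2d\lambda_\infty)^{-N}$ as in Lemma~\ref{etape1}); and (ii) for each of the $\le M$ side witnesses, the particle it corresponds to dies, within a unit time interval disjoint from the others by the separation property, before propagating — a positive constant $c=c(\Lambda,\gamma,d)$ to the power $M$. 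The temporal separation is what lets these $M$ events be realized on disjoint regions of the Poisson clocks, so they are genuinely independent given $\mathcal{F}$, and independent of (i). Hence
\[
\P\left(\exists s,~\xi_s^f\equiv 0 \,\middle|\, \mathcal{F}\right)\ \ge\ c^M\left(\frac1{1+\gamma+2d\lambda_\infty}\right)^{N}\quad\text{on that event}.
\]
Taking expectations gives $\P(\exists s,~\xi_s^f\equiv 0)\ge c^M (1+\gamma+2d\lambda_\infty)^{-N}\,\P(N^f(L_j,T_j)\le M,\ |\leftidx{_{L_j}}{A}{^f_{T_j}}|\le N)$, which is almost the claim but with the joint probability and a multiplicative constant.

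To finish I would pass to the limsup in $j$ and get rid of the constant. The inequality above holds for every $j$, and we may further replace the joint probability by the product $\P(N^f(L_j,T_j)\le M)\,\P(|\leftidx{_{L_j}}{A}{^f_{T_j}}|\le N)$ by FKG: both events are decreasing functions of the configuration in the box (more clocks can only create particles), so the joint probability dominates the product, and we keep the product. The constant $c^M(1+\gamma+2d\lambda_\infty)^{-N}$ is removed by the standard iteration/boosting device: apply the estimate not to $f$ but to the configuration at a later time on the surviving part, or equivalently run the box argument $k$ times in succession on disjoint time layers, which replaces $\P(\exists s,~\xi_s^f\equiv 0)$ on the left by the same quantity and the bad-event probability on the right by its $k$-th power up to the same constant; since the left side is fixed in $[0,1]$, letting first $j\to\infty$ and then $k\to\infty$ forces $\limsup_j \P(N^f(L_j,T_j)\le M)\P(|\leftidx{_{L_j}}{A}{^f_{T_j}}|\le N)\le \P(\exists s,~\xi_s^f\equiv 0)$. (Alternatively, and more cleanly, one notes that the argument really shows the conditional extinction probability given the box is $\ge$ a constant on the bad event, and then a zero–one type argument or the fact that the bad-event probabilities are themselves small for large $j$ closes the gap; I would present whichever is shortest.)

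The main obstacle I expect is the bookkeeping around the lateral sides: making precise the claim that "at most $M$ well-separated side points control all escape through the sides", and that the corresponding killing events can be chosen to depend on disjoint portions of the graphical representation so that they are conditionally independent and have a uniform lower bound. This is where the temporal-separation clause in the definition of $N^f(L,T)$ does its work, and where one must be careful that a particle appearing on the side at time $t$ may have come from a path that re-enters and exits several times — the separation $\ge 1$ is exactly designed to absorb this. The top-of-the-box contribution, by contrast, is a verbatim repeat of the estimate already carried out in Lemma~\ref{etape1}.
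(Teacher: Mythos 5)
The first half of your plan is essentially the paper's argument: one conditions on the $\sigma$-algebra $\mathcal{F}_{L_j,T_j}$ generated by the graphical representation in the \emph{open} box $(-L_j,L_j)^d\times[0,T_j]$ (not the closed one, so that the boundary time lines remain unexplored), and on the event $E_j=\{N^f(L_j,T_j)\le M\}\cap\{|\leftidx{_{L_j}}{A}{_{T_j}^f}|\le N\}$ bounds the conditional extinction probability below by a constant $c_{M,N}>0$: the top particles are handled exactly as in Lemma~\ref{etape1}, and each lateral time line is handled through its maximal $1$-separated set of witnesses. Be aware, though, that the side events are not simply ``the witness particle dies quickly'': the paper asks that no birth arrow leaves the union $I$ of unit neighbourhoods of the witnessed times, and that on each complementary interval either no birth arrow occurs or a death precedes the first one; this is what deals with a particle born near a witness that survives into the complementary interval, and it is what produces the factor $e^{-4d\lambda_\infty j_x}(1+2d\lambda_\infty)^{-j_x}$ per line. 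Your FKG step (the two events are decreasing, so the joint probability dominates the product) is also as in the paper.

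The genuine gap is your final step, i.e.\ how the constant $c_{M,N}$ disappears. The ``iteration/boosting over $k$ disjoint time layers'' does not work as stated: the layer events are neither independent nor distributed like $E_j$ (the configuration you would restart from is random and not of the assumed form), and even formally the inequality it would yield, $\P(\exists s,\ \xi_s^f\equiv0)\ge \mathrm{const}\cdot\P(E_j)^k$, degenerates as $k\to\infty$ and can never give the quantitative bound $\limsup_j\P(E_j)\le\P(\exists s,\ \xi_s^f\equiv0)$ --- and it is exactly this quantitative form that is used later (in the proof of Theorem~\ref{C1etC2}, the product is combined with $\P(|\leftidx{_{L_j}}{A}{_{T_j}^{f_n}}|\le 2^dN)=\epsilon$ and an extinction probability at most $\epsilon^2$), so a qualitative ``if $\limsup=1$ then extinction'' statement would not suffice. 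The correct device, which the paper invokes with ``as before'', is the martingale convergence argument already set up in Lemma~\ref{etape1}: $\P(\exists s,\ \xi_s^f\equiv0\mid\mathcal{F}_{L_j,T_j})\to\1_{\{\exists s,\ \xi_s^f\equiv0\}}$ almost surely, and since this conditional probability is at least $c_{M,N}$ on $E_j$, on the survival event one must eventually leave $E_j$; hence $\P(E_j\cap\{\tau^f=\infty\})\to0$ and $\limsup_j\P(E_j)\le\P(\exists s,\ \xi_s^f\equiv0)$, with no constant left to remove. Your parenthetical ``zero--one type argument'' is this very point and should be the core of the proof rather than an afterthought; the other fallback (``the bad-event probabilities are themselves small for large $j$'') is circular.
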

 
We proceed as in Lemma~\ref{etape1} but this time we do it in a space-time box. Here, we also have to control the exit of the process through the lateral sides of the box. 

\begin{proof} Let $\mathcal{F}_{L,T}$ be the $\sigma$-algebra generated by the Poisson processes from the graphical representation in the box $(-L,L)^d\times [0,T]$. 
We first prove that, almost surely on the event $\{N^f(L,T)+|\leftidx{_L}{A}{_T^f}|\leq k\}$, we have
 \[
 \P\left(A_s^f=\emptyset \text{ for some }s>T|\mathcal{F}_{L,T}\right)\geq \left(\frac{e^{-4\l_\infty d}}{1+\gamma+2d\l_{\infty}}\right)^k. 
 \]
 We suppose that $N^f(L,T)+|\leftidx{_L}{A}{_T^f}|\leq k$. By definition of $\leftidx{_L}{A}{^f}$, this event is $\mathcal{F}_{L,T}$-measurable. For $x\in\leftidx{_L}{A}{_T^f}$, we have seen that the probability that $x$ dies before giving birth or aging is at least \[\frac{1}{1+2d\l_{\infty}+\gamma}.\]
For the lateral sides of the box, we will work with the time lines $\{x\}\times[0,T]$ where $\|x\|_\infty=L$. Let $j_x$ be the maximal number of points on this time line in $\leftidx{_L}{A}{^f}$ with the property that each pair is separated by a distance of at least 1; and let $(x,t_1),\ldots,(x,t_{j_x})$ be such a set. Let \[I=\bigcup_{i=1}^{j_x} \{x\}\times (t_i-1,t_i+1).\]
The probability of the event $E_I=\{\text{there is no birth arrow coming out from }I\}$ is at least $e^{-4d\l_{\infty} j_x}$ because the length of $I$ is at most $2j_x$.

Furthermore, let $I_1^c,\ldots,I_{j_x}^c$ be the (possibly empty) intervals in the complement of $I$ in this line. For $1\leq l\leq j_x$, if $I_l^c$ has length $u$, then we consider
\begin{align*}
E_l &=\{\text{there is no birth arrow from $I_l^c$}\}\\
 &\phantom{={}}\cup\{\text{there is at least one birth but there is a death before in $I_l^c$}\}.\\
\text{We have: }\\ \P(E_l|\mathcal{F}_{L,T})&\ge\P\left(P^{\l_{\infty}}([0,u])=0\right)^{2d}+\P\left(Y_1^x\leq \min(Y^x_{2d\l_\infty}, u)\right)\\
&\phantom{={}}~\text{ with the same notations as previously}\\
       &= e^{-2d\l_{\infty} u}+ \int_0^u 2d\l_{\infty} e^{-2d\l_{\infty} s}(1-e^{-s})ds\\
       &= 1-\int_0^u 2d\l_{\infty} e^{-(1+2d\l_{\infty}) s}ds\geq \frac{1}{1+2d\l_{\infty}}.
\end{align*}
The events $(E_1,\ldots,E_j,E_I)$ are independent because they refer to disjoint parts of the graphical representation. Consequently, the probability that none of the points on the time line contributes to the survival of the process is at least 
\begin{align*}
\P(\text{no birth from the time line }\{x\}\times [0,L]|\mathcal{F}_{L,T})& \geq \P(E_I|\mathcal{F}_{L,T})\prod_{1\leq l\leq {j_x}} \P(E_l|\mathcal{F}_{L,T})\\
& \geq  e^{-4d\l_{\infty} j_x} \left(\frac{1}{1+2d\l_{\infty}}\right)^{j_x}. 
\end{align*}
Finally, we have
\begin{align*}
  \P\left(A_s^f=\emptyset \text{ for some }s|\mathcal{F}_{L,T}\right) &\geq 
   \P\left(\forall x\in\leftidx{_L}{A}{_T^f}, x \text{ dies before giving birth or aging}|\mathcal{F}_{L,T}\right)\\
    & \phantom{\ge{}}\times\P\Big(\text{For every lateral time line, no birth from it}|\mathcal{F}_{L,T}\Big)\\
    &\geq \left(\frac{1}{1+\g+2d\l_{\infty}}\right)^{|\leftidx{_L}{A}{_T^f}|} \prod_{x:\|x\|_\infty=L} \left(\frac{e^{-4d\l_{\infty}}}{1+2d\l_{\infty}}\right)^{j_x}\\
&\geq \left(\frac{e^{-4d\l_\infty}}{1+\g+2d\l_{\infty}}\right)^k,
\end{align*}
on $\{N^f(L,T)+|\leftidx{_L}{A}{_T^f}|\leq k\}$.
As before, we deduce that
\[\limsup_{j\rightarrow\infty}\P\left(N^f(L_j,T_j)+|\leftidx{_{L_j}}{A}{_{T_j}^f}|\leq k\right)=\P\left(\exists s,~\xi_s^f\equiv 0\right).\]
The random variables $N^f(L_j,T_j)$ and $|\leftidx{_{L_j}}{A}{_{T_j}^f}|$ are increasing functions of the configurations on $[-L_j,L_j]^d\times[0,T_j]$. Therefore, by the spatiotemporal positive correlations of the measure $\P$, the FKG inequality applies and for every $M,N$ we have
\begin{align*}
\P\left(N^f(L_j,T_j)+|\leftidx{_{L_j}}{A}{_{T_j}^f}|\leq M+N\right)
&\geq\P\left(N^f(L_j,T_j)\leq M\text{ and }|\leftidx{_{L_j}}{A}{_{T_j}^f}|\leq N\right)\\
&\geq\P\left(N^f(L_j,T_j)\leq M\right)\P\left(|\leftidx{_{L_j}}{A}{_{T_j}^f}|\leq N\right).
\end{align*}
We thus obtain the announced result.
\end{proof}

\textbf{Fifth step:} We control the number of living points in an orthant of the lateral sides of a large space-time box according to the total number of living points in its lateral sides.

Let $B_+(L,T)=\{(x,t)\in\Z^d\times[0,T]\text{ such that }x_1=+L, x_i\geq 0\text{ for }2\leq i\leq d\}$; $B_+(L,T)$ is one of the $2d2^{d-1}$ orthants of the lateral sides of the box. And let $N_+^f(L,T)$ be the maximal number of points in $S_+(L,T)\cap\leftidx{_L}{A}{^f}$  with the following property: if $(x,t)$ and $(x,s)$ are any distinct points with the same spatial coordinate, then $|t-s|\geq 1$.
\begin{lemma}
\label{FKG2}
For $L,M,T$ and $f$ acceptable with $\supp f \in (-L,L)^d$,
 \[\P\left(N_+^f(L,T)\leq M\right)^{d2^d}\leq \P\left(N^f(L,T)\leq Md2^d\right).\]
\end{lemma}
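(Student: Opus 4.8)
The plan is to mimic exactly the FKG argument used in Lemma~\ref{FKG1}, but now carried out on the lateral sides of the space-time box rather than on the top face. Recall that $S(L,T)$ is the union of the lateral sides of $[-L,L]^d\times[0,T]$. First I would observe that these lateral sides decompose into $2d$ faces (one for each choice of a coordinate $i$ and a sign), and each such face, being a copy of $[-L,L]^{d-1}\times[0,T]$, splits further into $2^{d-1}$ spatial orthants; this gives a partition of $S(L,T)$ into $2d\cdot 2^{d-1}=d2^d$ congruent pieces, one of which is exactly $B_+(L,T)$ (up to the slight care needed for sites lying on the intersection of two faces, i.e.\ where two coordinates have absolute value $L$ — these can be assigned arbitrarily, say by a fixed tie-breaking rule, without affecting the inequality).

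Next I would introduce the random variables $Z_1,\dots,Z_{d2^d}$, where $Z_k$ is the analogue of $N_+^f(L,T)$ computed on the $k$-th piece of the partition: namely the maximal number of points of $\leftidx{_L}{A}{^f}$ lying in that piece such that any two points sharing a spatial coordinate are separated in time by at least $1$. One of these, say $Z_1$, is $N_+^f(L,T)$. The key combinatorial fact is the superadditivity-type bound
\[
N^f(L,T)\le Z_1+\cdots+Z_{d2^d},
\]
which holds because any admissible family of well-separated points on $S(L,T)$ restricts, on each piece of the partition, to an admissible family for that piece (the separation property is inherited). Hence
\[
\P\left(N^f(L,T)\le Md2^d\right)\ge \P\left(Z_1+\cdots+Z_{d2^d}\le Md2^d\right)\ge \P\left(\forall k,\ Z_k\le M\right).
\]

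Finally I would invoke the hypothesis that $f$ is acceptable together with the symmetry of the box and of the graphical construction: the $d2^d$ pieces of $S(L,T)$ are images of one another under reflections in the coordinate hyperplanes, and $\P_{\Lambda,\gamma}$ is invariant under these reflections (the Poisson structure and the coupling of the birth processes are reflection-invariant, and acceptability of $f$ guarantees the initial configuration is too), so $Z_1,\dots,Z_{d2^d}$ are identically distributed. Moreover each $Z_k$ is an increasing function of the graphical-representation configuration in $[-L,L]^d\times[0,T]$ — adding arrows, removing crosses, or adding circles can only enlarge the collection of living space-time points — so by the spatiotemporal FKG inequality \eqref{FKG} for the measure $\P$ we get
\[
\P\left(\forall k,\ Z_k\le M\right)\ge\prod_{k=1}^{d2^d}\P\left(Z_k\le M\right)=\P\left(N_+^f(L,T)\le M\right)^{d2^d},
\]
which combined with the previous display yields the claim. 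The only mildly delicate point is the bookkeeping for sites on the edges where two faces meet and, relatedly, making sure the decreasing events $\{Z_k\le M\}$ are genuinely decreasing functions of the configuration so FKG applies in the correct direction; neither presents a real obstacle, and the argument is otherwise a routine repetition of Lemma~\ref{FKG1}.
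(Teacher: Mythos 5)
Your proposal is correct and follows essentially the same route as the paper: decompose the lateral boundary into the $d2^d$ orthant pieces, bound $N^f(L,T)$ by the sum of the orthant counts, use acceptability of $f$ and reflection symmetry to see the counts are identically distributed increasing functionals, and apply the spatiotemporal FKG inequality exactly as in Lemma~\ref{FKG1}. The paper's proof is just a terser version of this argument (it even elides the superadditivity display), so no further comparison is needed.
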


\begin{proof}
Let $X_1,\ldots,X_{d2^d}$ be the number of living points in the different orthants of all lateral faces of the space-time box. For example  $X_1=N_+^f(L,T)$. These variables are identically distributed and are increasing functions of the configurations on $[-L,L]^d\times[0,t]$. Therefore, by the spatiotemporal correlations of the measure $\P$, the FKG inequality applies and we obtain the results in the same way as in Lemma~\ref{FKG1}.
\end{proof}

\textbf{Sixth step:} Now we are ready to construct good events on a finite space-time box, under assumption of supercriticality.

\begin{theorem}
\label{C1etC2}
Assume that $(\xi_t)$ is supercritical. Let $(f_n)_n$ be an increasing sequence of acceptable functions which converges to $f:\Z^d\to\N$ not identically null and periodic. For every $\epsilon >0$, there exist choices of $n,L,T$ such that
\begin{align*}
\label{C1}\tag{C1}
 \P\left(\exists x\in [0,L)^d \text{ such that } \leftidx{_{L+2n}}{\xi}{_{T+1}^{f_n}} \ge T_{x}\circ f_n \right)> (1-\epsilon)\text{ and}
 \end{align*}
 \[ \P\left(\begin{array}{c}        
\exists x\in \{L+n\}\times[0,L)^{d-1} \text{ and } t\in[0,T)\\
   \label{C2}\tag{C2}
  \text{ such that } \leftidx{_{L+2n}}{\xi}{_{t+1}^{f_n}} \ge T_{x}\circ f_n
            \end{array}\right)> (1-\epsilon)   .
 \]
\end{theorem}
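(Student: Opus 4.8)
The plan is to combine the five preceding lemmas (Lemmas~\ref{survie}, \ref{etape1}, \ref{FKG1}, \ref{etape2}, \ref{FKG2}) so as to force, with probability close to one, a large number of well-placed living particles both on the top face and on the lateral faces of a finite space-time box, and then upgrade "many particles somewhere in an orthant" to "a full translated copy of $f_n$ somewhere in an orthant". First I would fix $\epsilon>0$ and apply Lemma~\ref{survie} to choose $f_n$ with $\P(\forall t>0,~\xi_t^{f_n}\not\equiv0)>1-\epsilon'$ for a small $\epsilon'$ depending on $\epsilon$. Then, using Lemma~\ref{etape1}, pick $T$ then $L$ large so that $\P(|\leftidx{_L}{A}{_T^{f_n}}|\geq N)>1-\epsilon'$ for a target $N$ to be chosen below; Lemma~\ref{FKG1} then yields $\P(|\leftidx{_L}{A}{_T^{f_n}}\cap[0,L)^d|\geq N/2^d)$ also close to one. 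For the lateral sides, Lemma~\ref{etape2} forces (after passing to the complementary event and taking $T,L$ large along suitable sequences) $\P(N^{f_n}(L,T)\geq M)$ close to one, and Lemma~\ref{FKG2} pushes this down to a single orthant $B_+(L,T)$, giving $\P(N_+^{f_n}(L,T)\geq M/(d2^d))$ close to one. Intersecting these finitely many high-probability events, I get a single event of probability $>1-\epsilon$ on which there are at least $N/2^d$ living particles in $[0,L)^d\times\{T\}$ and at least $M/(d2^d)$ suitably-spaced living particles in $B_+(L,T)$.

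The second half of the argument converts the particle-count statements into the stated "translated copy of $f_n$" statements. Here I would choose $n$ so that $\supp f_n\subset[-n,n]^d$, and choose $N$ (resp. $M$) large enough — using a pigeonhole/covering argument — that among $N/2^d$ (resp. $M/(d2^d)$) living points inside a box of side $L$, at least one point $x\in[0,L)^d$ (resp. $x\in\{L+n\}\times[0,L)^{d-1}$) must be surrounded in the sense needed: because there are only $O(L^d)$ possible positions but $f_n$ has bounded support, one forces, after enlarging $L$, a living point whose $n$-neighbourhood in space (and, for the lateral case, whose time-slab of width $1$) contains the whole would-be image of $\supp f_n$. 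However merely having \emph{one} living point is not enough to guarantee $\leftidx{_{L+2n}}{\xi}{_{T+1}^{f_n}}\geq T_x\circ f_n$: one needs \emph{every} site of $x+\supp f_n$ alive with age at least $f_n$. To get this, I would run the process an extra unit of time: on the additional interval $[T,T+1]$, with probability bounded below (uniformly, since it involves only finitely many Poisson marks in a bounded region) the single living particle at the chosen site gives birth to all sites of $x+\supp f_n$ and these then undergo enough maturations (recall maturation rate $\gamma>0$) and no deaths, so that $\leftidx{_{L+2n}}{\xi}{_{T+1}^{f_n}}(z)\geq f_n(z-x)$ for all $z$. This is exactly why the box in the conclusion is $[-(L+2n),L+2n]^d\times[0,T+1]$ rather than $[-L,L]^d\times[0,T]$: the births during $[T,T+1]$ use arrows whose origins lie within distance $n$ of $[-L,L]^d$.

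Concretely, for (C1) I would argue: on the good event, fix one living point $x_0\in[0,L)^d$; shift so that $x:=x_0$ has its full $\supp f_n$-translate inside $[0,L)^d$ (enlarging $L$ by $n$ and relabelling absorbs the boundary loss), then on $[T,T+1]$ demand the local configuration $\omega$ realise: no death at any site of $x+\supp f_n$, a birth arrow from $x_0$ to each such site, and at least $\max_z f_n(z)$ maturations at each of them — an event of probability $p_0>0$ depending only on $n,\gamma,\Lambda,f_n$. By the Markov property at time $T$ and the FKG/positive correlation to reincorporate the "survival-type" good event, $\P(\textrm{C1 event})\geq (1-\epsilon')p_0$, and choosing $\epsilon'$ small (hence $N$ large, hence $L,T$ large) makes this $>1-\epsilon$. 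The argument for (C2) is identical, working with the orthant $B_+(L,T)$ of the lateral faces, a living point $x\in\{L+n\}\times[0,L)^{d-1}$ found at some time $t\in[0,T)$, and the same one-unit local-birth-and-maturation event, giving $\leftidx{_{L+2n}}{\xi}{_{t+1}^{f_n}}\geq T_x\circ f_n$.

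The main obstacle is the pigeonhole/geometry bookkeeping in passing from "$N'$ living points in a box of side $L$" to "a living point admitting a clean $n$-neighbourhood / time-slab for the copy of $f_n$" while keeping all the constants coherent: one has to be careful that the number of "bad" positions near the boundary of the orthant (where a translate of $\supp f_n$ would stick out) is negligible compared with $N'$, which is arranged by first fixing $n$, then choosing the count thresholds $N,M$ as functions of $n$ and $\epsilon$, and only then letting $L,T\to\infty$ along the sequences supplied by Lemmas~\ref{etape1} and~\ref{etape2}; and that the extra unit of time, which changes the truncation level from $L$ to $L+2n$ and the time horizon from $T$ to $T+1$, is harmless because it only invokes Poisson data in a bounded space-time region and the Markov property cleanly separates it from the earlier survival event.
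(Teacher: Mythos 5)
Your first half (combining Lemmas~\ref{survie}, \ref{etape1}, \ref{FKG1}, \ref{etape2}, \ref{FKG2} to force many living particles on an orthant of the top face and of a lateral face) matches the paper. But the second half has a genuine gap: the mechanism that turns ``many living points'' into ``with probability close to one, some point spawns a copy of $f_n$'' is missing. You locate a \emph{single} well-placed living point and let it attempt, during one unit of time, to give birth to and mature all of $x+\supp f_n$; this succeeds only with some fixed probability $p_0>0$ depending on $n,\gamma,\Lambda$ but \emph{not} on $\epsilon'$, $N$, $L$ or $T$. Your final bound $(1-\epsilon')p_0$ is therefore bounded away from $1$ by $p_0<1$, and no choice of ``$\epsilon'$ small, hence $N$ large'' can push it above $1-\epsilon$: in your argument $N$ is only used for pigeonhole positioning, so enlarging it does not improve the spawning probability at all.

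The paper's proof uses the large particle count in an essentially different way, and this is the step you need. One chooses $N'$ so that $1-(1-\alpha)^{N'}\geq 1-\epsilon$, where $\alpha>0$ is a uniform lower bound (over the position and the age $k$ of the point) on the probability that a single living point spawns $T_x\circ f_n$ within one unit of time; then one chooses $N$ so large that \emph{any} $N$ points of $\Z^d$ contain $N'$ points pairwise separated by distance at least $2n+1$. Because the spawning attempts of these $N'$ points use disjoint parts of the graphical representation (disjoint space boxes over the time interval of length one, which is also disjoint from $[0,T]$, so independent of the counting event), the attempts are independent, and the probability that at least one succeeds is at least $1-(1-\alpha)^{N'}\geq 1-\epsilon$. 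It is precisely this amplification over many independent trials -- with the thresholds $N,M$ fixed as functions of $\epsilon$ and $n$ \emph{before} invoking Lemmas~\ref{etape1} and~\ref{etape2} to choose $L,T$ -- that makes the probabilities in \eqref{C1} and \eqref{C2} tend to one; the same remark applies verbatim to your treatment of the lateral orthant, where the spacing built into the definition of $N_+^{f_n}(L,T)$ (time separation at least $1$ on a given spatial line, spatial separation at least $2n+1$ after thinning) is what guarantees the independence of the trials. Your appeal to FKG to ``reincorporate'' the good event is also unnecessary once one notes that the spawning attempts use Poisson marks after time $T$ (respectively after the hitting time $t$), disjoint from the data defining the counting events.
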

Without lost of generality, we assume that $\supp f_n=[-n,n]^d$. In order to get an idea, one may think of $f_n$ as $\1_{[-n,n]^d}$.

\begin{proof} 
Combining the previous propositions, we want to construct a large space-time box with enough living points on an orthant of its boundary. If there are enough living points, then at least one of them will spawn a good configuration, within one unit of time, which allows us to restart the process.

\begin{enumerate}
 \item Choice of $n$: We want to start with a good enough configuration ($f_n$) to survive with high probability. Let $\epsilon>0$; using Lemma~\ref{survie} we can choose $n$ large enough so that
\[\P(\forall t>0,~ \xi_t^{f_n}\not\equiv 0)>1-\epsilon^2.\]

\item Choice of $N$: We want the process to contain enough particles ($N$) on an orthant of the top of a space-time box so that at least one of them spawn $f_n$ in one unit of time. Let $x\in\Z^d$ with age $k$; the probability that $x$ succeeds in spawning $f_n$ in one unit of time is at least $\P(T_x\circ(\leftidx{_{n+1}}{\xi}{_1^{k\d_0}} \ge f_n))$. This quantity is bounded from below by $\alpha>0$, independently of $x$ and $k$. We thus choose $N'$ large enough so that $N'$ independent trials will contain at least one success with probability at least $1-\epsilon$, i.e:
\[N' \text{ such that }1-(1-\alpha)^{N'}\geq \epsilon.\] 
Next we choose $N$ large enough to ensure that any $N$ points in $\Z^d$ contain a subset with at least $N'$ points separated from each other by a distance of at least $2n+1$. In this way, the trials will be independent.

\item Choice of $L_j,T_j$: We want at least $N$ living particles on the orthant $\{T\}\times [0,L)^d$ of the top face of the box so we want to choose $L,T$ such that 
\[\P\left(| \leftidx{_{L}}{A}{_T^{f_n}}\cap [0,L)^d|>N\right)\geq 1-\epsilon.\] To use Lemma~\ref{FKG1}, we have to control the number of living particles in the whole top face. By Lemma~\ref{etape1} there exist sequences $(L_j)$, $(T_j)$ increasing to infinity such that, for each $j$
\[\P\left(|\leftidx{_{L_j}}{A}{_{T_j}^{f_n}}|>2^dN\right)=1-\epsilon.\]

\item[(2bis)] Choice of $M$: To satisfy \eqref{C2} we want enough living points on an orthant of the sides of a space-time box so that at least one of them spawns $f_n$ in one unit of time. We thus choose $M'$ large enough so that $M'$ independent trials of the event $\left\{T_x\circ \left(\leftidx{_{n+1}}{\xi}{_1^{k\d_0}} \ge T_{-ne_1} \circ f_n\right)\right\}$, whose probability is bounded from below by $\beta>0$ independent of $x$ and $k$, will contain at least one success with probability at least $1-\epsilon$:
\[M' \text{ such that }1-(1-\beta)^{M'}\geq \epsilon,\]
where $\leftidx{_{n+1}}{\xi}$ designates here the process restricted to the box $[0,2n]\times[-n,n]^d$.
Next we choose $M$ large enough so that any $M$ points in $\Z^d$ will contain a subset with at least $M'$ points separated from each other by a distance of at least $2n+1$. In this way, the trials will be independent.

\item[(3bis)] Choice of $L,T$: We want at least $M$ living points on the orthant so we want to choose $L,T$ such that $\P\left( N_+^f(L,T)>M\right)\geq 1-\epsilon$. To use Lemma~\ref{FKG2}, we have to control the number of living points in all the lateral faces. Using Lemma~\ref{etape2} we derive that for some $j$, one has
\[\P\left(N^{f_n} (L_j,T_j)> Md2^d\right)> 1-\epsilon.\]
Letting $L=L_j,T=T_j$ for that choice of $j$, we apply Lemmas~\ref{FKG1} and~\ref{FKG2} and get that
\begin{align*}
\P\left(|\leftidx{_{L}}{A}{_{T}^{f_n}}\cap \left[\right. 0,L \left.\right)^d|>N\right)&\geq 1-\epsilon^{2^{-d}}~~\text{  and }\\
\P\left(N_+^{f_n} (L,T)> M\right)&\geq 1-\epsilon^{2^{-d}/d}.
\end{align*}
\item Using the independence of Poisson processes on disjoint space-time regions, we obtain:
\begin{align*}
 \P&\left(\exists x\in [0,L)^d \leftidx{_{L+2n}}{\xi}{_{T+1}^{f_n}} \supset T_x\circ f_n\right)\\
 &\geq\P\left(|\leftidx{_{L}}{A}{_{T}^{f_n}}\cap [0,L[^d|>N\text{ and one of these points spawns }f_n\right)\\
 &\geq (1-\epsilon^{2^{-d}})(1-\epsilon),
\end{align*}
and
\begin{align*}
   \P&\left(\exists x\in \{L+n\}\times[0,L)^{d-1} \text{ and } t\in[0,T) \text{ such that }\leftidx{_{L+2n}}{\xi}{_{t+1}^{f_n}} \supset T_x\circ f_n \right)\\
   &\geq\P\left(N_+^{f_n} (L,T)> M \text{ and one of these points spawns }f_n\right)\\
 &\geq (1-\epsilon^{2^{-d}/d})(1-\epsilon).
\end{align*}
\end{enumerate}
Choosing initially $\epsilon^{2^d}$ instead of $\epsilon$, we obtain the announced result.
\end{proof}

\subsection{Finite space-time condition implies supercritical}
We have worked to construct ``good events'' on large finite space-time boxes. But why are they called good? Because these events satisfy the reverse proposition: if the events occur with high probability, the process is supercritical. So the supercriticality can be characterized by looking at the Poisson processes in a finite space-time box, without the need to know the entire process. In this part, we suppose that the finite space time conditions \eqref{C1} and \eqref{C2} hold and we prove the supercriticality of the process by embedding a supercritical percolation process in the CPA. Before starting this construction, we obtain other equivalent conditions. 

First, we combine the conditions \eqref{C1} and \eqref{C2} to obtain a block event guaranteeing the existence of a well-oriented open path. 

\begin{lemma}
\label{lemC}
Let $(f_n)_n$ be an increasing sequence of acceptable functions from $\Z^d$ to $\N$ which converges to $f:\Z^d\to\N$ not identically null and periodic. Assume that for every $\epsilon>0$ there exist $n,L,T\in\N^*$ such that conditions \eqref{C1} and \eqref{C2} are satisfied. Then 
  \[
  \P\left(\begin{array}{c}
           \exists x\in [L+n,2L+n]\times[0,2L)^{d-1} \text{ and } t\in[T,2T)\\
              \label{C}\tag{C}
  \text{ such that }\leftidx{_{2L+3n}}{\xi}{_{t}^{f_n}} \ge T_x\circ f_n
  \end{array}
  \right)> (1-\epsilon).
  \]
\end{lemma}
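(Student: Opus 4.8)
The plan is to combine the two finite space-time conditions \eqref{C1} and \eqref{C2} by chaining them: first use \eqref{C1} to move the configuration ``up'' in time (from $f_n$ at time $0$ to a translate $T_x\circ f_n$ at time $T+1$, with $x$ in the box $[0,L)^d$), and then use a translated-in-time-and-space copy of \eqref{C2} to move ``sideways'' (from that translate to a further translate $T_{x'}\circ f_n$ located near the face $\{L+n\}\times[0,L)^{d-1}$, within one more unit of time plus at most $T$ more units). Concatenating an open path realizing \eqref{C1} with an open path realizing the time-space shifted \eqref{C2} produces, by attractivity and the trajectorial semigroup property, an open path from $f_n$ at time $0$ to some $T_{x'}\circ f_n$ with spatial location in a box of the stated size and time in $[T, 2T)$ roughly (one checks the arithmetic: $T+1$ from the first step, plus up to $T$ from the second, lands in an interval one can enlarge to $[T,2T)$ after adjusting constants, i.e.\ choosing the $n,L,T$ slightly larger). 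The spatial confinement to $[L+n,2L+n]\times[0,2L)^{d-1}$ comes from adding the two displacement vectors (each coordinate a sum of a $[0,L)$ contribution and the $\{L+n\}$ or $[0,L)$ contributions from \eqref{C2}), and the truncation level $2L+3n$ is chosen large enough to contain both truncated boxes $[-L-2n,L+2n]^d$ used in \eqref{C1} and \eqref{C2} after the relevant translation.

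More precisely, I would argue as follows. Fix $\epsilon>0$ and apply Theorem~\ref{C1etC2} with $\epsilon/2$ (or $\epsilon^2$, to be safe with the union bound below) to get $n,L,T$ such that both \eqref{C1} and \eqref{C2} hold. On the event in \eqref{C1}, let $x\in[0,L)^d$ be (a measurable choice of) a site with $\leftidx{_{L+2n}}{\xi}{_{T+1}^{f_n}}\ge T_x\circ f_n$. Now invoke the Markov property at time $T+1$: conditionally on $\mathcal{F}_{T+1}$, the process $\leftidx{}{\xi}{_{\,\cdot\,}^{T_x\circ f_n}}\circ\theta_{T+1}$ restricted to the appropriate box is distributed as a fresh copy started from $T_x\circ f_n$, and by spatial translation invariance this is $T_x$ applied to a fresh copy started from $f_n$; the event \eqref{C2} for this fresh copy, translated by $T_x$ in space, says that there exist $x''\in\{L+n\}\times[0,L)^{d-1}$ and $t''\in[0,T)$ with a living translate $T_{x+x''}\circ f_n$ at time $T+1+t''+1$, inside the box $x+[-L-2n,L+2n]^d$. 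Since $x\in[0,L)^d$ and $x''\in\{L+n\}\times[0,L)^{d-1}$, the sum $x+x''$ lies in $[L+n,2L+n]\times[0,2L)^{d-1}$, and $T+1+t''+1\in[T+2,2T+1)$, which I can arrange to lie in $[T,2T)$ by taking $T$ large (e.g.\ replacing $T$ by $T+2$ from the start and enlarging $L$ correspondingly so the inclusions still hold). For the truncation, $x+[-L-2n,L+2n]^d\subset[-L-2n,2L+3n]^d\times\cdots$, whence the level $2L+3n$ suffices. By attractivity, stacking the \eqref{C1}-witness on top of the \eqref{C2}-witness gives $\leftidx{_{2L+3n}}{\xi}{_{t}^{f_n}}\ge T_{x+x''}\circ f_n$ for the relevant $t$, which is exactly the event in \eqref{C}.

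The probability bound then follows from a union bound / conditioning estimate: the event in \eqref{C} contains the intersection of the \eqref{C1}-event with the (conditionally independent, by disjointness of the Poisson processes used after time $T+1$ in the translated box) \eqref{C2}-event for the restarted process, so its probability is at least $(1-\epsilon/2)(1-\epsilon/2)>1-\epsilon$; choosing $\epsilon^2$ in place of $\epsilon$ at the outset cleans up the exponents exactly as at the end of the proof of Theorem~\ref{C1etC2}. The main obstacle I anticipate is purely bookkeeping rather than conceptual: one must be careful that the two truncated processes $\leftidx{_{L+2n}}{\xi}{}$ appearing in \eqref{C1} and \eqref{C2} use Poisson marks on \emph{disjoint} space-time regions once we restart at time $T+1$ inside a spatially shifted box, so that the Markov/FKG-free conditional independence really applies and the truncation level $2L+3n$ is genuinely large enough to contain the shifted box $x+[-L-2n,L+2n]^d$ for every admissible $x\in[0,L)^d$; and one must track the arithmetic on the time interval (the ``$+1+1$'' and the shift of $T$) so that the final time lands in $[T,2T)$ rather than, say, $[T+2,2T+1)$ — this is handled by enlarging $n,L,T$ at the very beginning, exactly the kind of harmless constant-adjustment used throughout this section.
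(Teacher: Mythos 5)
Your proof is correct and is essentially the paper's own argument: chain the two finite-box events via a Markov restart, attractivity/monotonicity in the truncation box, and spatial translation invariance, getting the product bound $(1-\epsilon/2)^2$, the same spatial window $[L+n,2L+n]\times[0,2L)^{d-1}$ (the sum of the two displacements), a time window of the form $[T,2T)$, and the truncation level $2L+3n$. The only inessential differences are the order of composition — the paper applies \eqref{C2} first and restarts at the stopping time $T_2$ via the strong Markov property, then applies \eqref{C1}, so the deterministic increment $T$ comes last and the time lands in $[T,2T)$ without adjustment, whereas you apply \eqref{C1} first and restart at a deterministic time (ordinary Markov property suffices) — and your ``$+1$'' time bookkeeping, which the paper itself glosses over (its proof of the lemma drops the $+1$'s appearing in Theorem~\ref{C1etC2}) and which is harmlessly absorbed by shifting the time window, as you note.
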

\begin{proof}
The idea is to use condition \eqref{C2} to spawn (with high probability) $f_n$ shifted in space but not necessarily in time, and use condition \eqref{C1} with the Markov property on the restarted process to spawn $f_n$ shifted in time but not necessarily in space.

For $n,L,T\in\N^*$, we define $E_1^{n,L,T}$ and $E_2^{n,L,T}$ as the events appearing in conditions \eqref{C1} and \eqref{C2}: 
\begin{align*}
E_1^{n,L,T}&=\left\{\exists x\in [0,L)^d \text{ such that } \leftidx{_{L+2n}}{\xi}{_{T}^{f_n}} \ge T_x\circ f_n \right\}\\
E_2^{n,L,T}&=\left\{\exists x\in \{L+n\}\times[0,L)^{d-1} \text{ and } t\in[0,T)\text{ such that }\leftidx{_{L+2n}}{\xi}{_{t}^{f_n}} \ge T_x\circ f_n\right\}.
\end{align*}
If $E_2^{n,L,T}$ occurs, then let $X_2$ be the first point (according to some fixed order in space) and $T_2$ be the first time such that $\leftidx{_{L+2n}}{\xi}{_{T_2}^{f_n}} \ge T_{X_2}\circ f_n$. Let $\epsilon>0$ and $n,L,T$ given by Theorem~\ref{C1etC2} such that $\P(E_1^{n,L,T})>1-\frac{\epsilon}{2}$ and $\P(E_2^{n,L,T})>1-\frac{\epsilon}{2}$.
For $n,L,T\in\N^*$, we define $E^{n,L,T}$ the event appearing in condition \eqref{C} as
\[
E^{n,L,T}=\left\{\begin{array}{c} \exists x\in [L+n,2L+n]\times[0,2L)^{d-1} \text{ and } t\in[T,2T)\\
\text{ such that }\leftidx{_{2L+3n}}{\xi}{_{t}^{f_n}} \ge T_x\circ f_n\end{array}\right\}.
\]
We have
\begin{align*}
\P(E^{n,L,T})&\geq \P\left(\left(\xi_s^{f_{n}}\right)_{s\geq 0} \in E_2^{n,L,T} \text{ and } \left(\xi_{T_2+s}^{f_{n}}\right)_{s\geq 0} \in E_1^{n,L,T}\right)\\
\label{un}\tag{*}
&\geq \P\left(\left(\xi_s^{f_n}\right)_{s\geq 0} \in E_2^{n,L,T}\right) \P\left(\Big(\xi_{s}^{g_n}\Big)_{s\geq 0} \in E_1^{n,L,T}\right)\\
&\geq \left(1-\frac{\epsilon}{2}\right)^2.
\end{align*}
In \eqref{un}, $g_n$ is the random state of the CPA at the stopping time $T_2$ where all the particles in the box $X_2+[-n,n]^d$ are alive (conditionally to $E_2$). The inequality \eqref{un} follows from the strong Markov property applied at the stopping time $T_2$, from the monotonicity and from the spatial invariance of the model. 
\end{proof}

The purpose is now to use the previous lemma in order to construct an active path with \textit{good direction}, and with the same \textit{uncertainty} at the start and finish.
To establish Theorem~\ref{FST}, we now take for every $n\ge 0$, $f_n=\1_{[-n,n]^d}$.
\begin{lemma} 
\label{Cbloc}
Suppose that  $\eqref{C}$ is satisfied. Then for every $\epsilon>0$, there exist $n,a,b\in\N^*$ with $n<a$ such that if $(x,s)\in[-a,a]^d\times[0,b]$ then
\begin{align*}
\tag{B1}
\P\left( \begin{array}{c}
          \exists(y,t)\in[a,3a]\times[-a,a]^{d-1}\times[5b,6b]\text{ and open paths staying}\\
           \text{in }[-5a,5a]^d\times[0,6b]\text{ and going from }(x,s)+[-n,n]^d\times\{0\}\\
           \text{ to every point in } (y,t)+[-n,n]^d\times\{0\}
         \end{array}
\right)> 1-\epsilon.
\end{align*}
\end{lemma}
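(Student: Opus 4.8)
The plan is to turn the single-block statement \eqref{C} into the required $(B1)$-type statement by iterating the block event along a well-chosen polygonal space-time route. Condition \eqref{C} gives us, from the initial box $[-n,n]^d\times\{0\}$, a new fully-occupied box $T_x\circ f_n$ sitting at a space-time point $(x,t)$ with $x$ in the spatial region $[L+n,2L+n]\times[0,2L)^{d-1}$ and $t\in[T,2T)$. By spatial symmetry of the model (reflections in the coordinate hyperplanes) and translation invariance, the same statement holds with $x_1$-direction replaced by any of the $2d$ signed coordinate directions; and by monotonicity (attractivity), once we know a box $T_x\circ f_n$ is occupied at time $t$, we may restart \eqref{C} from there using the strong Markov property at the (stopping) time at which the block is completed. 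So the first step is to record these three stability properties of the block event: restartability via the strong Markov property at the completion time, invariance under the reflection symmetries, and the freedom to aim in $\pm e_i$ for any $i$.

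Next I would fix the geometry. We want to reach a target box located in the orthant $[a,3a]\times[-a,a]^{d-1}\times[5b,6b]$, i.e. a net spatial displacement of order $a$ purely in the $+e_1$ direction and a time displacement of order $b$. The idea is to choose $a$ and $b$ as suitable multiples of $L$ and $T$ respectively, and to build the path as a concatenation of a bounded number (depending only on $d$, not on $\epsilon$) of \eqref{C}-blocks: some blocks step in the $+e_1$ direction (to build up the required $x_1$-displacement of order $a$), and the remaining blocks step in directions $\pm e_2,\dots,\pm e_d$ in a way that cancels out, so that the final transverse coordinates land in $[-a,a]^{d-1}$, while each block also advances time by an amount in $[T,2T)$. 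Since each block's time increment lies in a fixed interval, after $k$ blocks the total elapsed time lies in $[kT,2kT)$; choosing the number of blocks and the relation $b\asymp kT$, $a\asymp kL$ appropriately forces the endpoint into the prescribed space-time window $[a,3a]\times[-a,a]^{d-1}\times[5b,6b]$. One must also check the intermediate boxes stay inside $[-5a,5a]^d\times[0,6b]$: since each block of \eqref{C} is confined to a region of diameter $O(L)$ around its starting box and there are $O(1)$ of them, this is automatic once the constants are chosen with enough slack (hence the generous factor $5$).

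Then I would run the union bound over the blocks. Applying \eqref{C} (with probability $>1-\epsilon'$) at each of the $k$ successive restart times, the probability that the whole concatenation succeeds is at least $(1-\epsilon')^k$; choosing $\epsilon'$ small enough that $(1-\epsilon')^k>1-\epsilon$ (valid because $k$ depends only on $d$) and then taking $n,L,T$ from \eqref{C} for this $\epsilon'$, and finally setting $a,b$ to the chosen multiples of $L,T$, gives the claimed lower bound $1-\epsilon$. The last point is to translate "$T_y\circ f_n$ is occupied at the endpoint" into "open paths from every point of $(x,s)+[-n,n]^d\times\{0\}$ to every point of $(y,t)+[-n,n]^d\times\{0\}$": the former says precisely that $\leftidx{_{\cdot}}{\xi}{}$ dominates $\1$ on the whole box $y+[-n,n]^d$, which, unwound through the definition of the truncated process and the open-path definition, is exactly the existence of such paths confined to the relevant (truncation) box; one also uses that the argument is unchanged when the starting box is shifted by any $(x,s)\in[-a,a]^d\times[0,b]$, by translation invariance, which is why the statement is uniform in $(x,s)$.

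The main obstacle I anticipate is bookkeeping rather than conceptual: one has to choose the direction sequence of the $O(1)$ blocks so that the transverse coordinates return to the window $[-a,a]^{d-1}$ while the $e_1$-coordinate is pushed to order $a$ \emph{and} the cumulative time increments — each only controlled to lie in $[T,2T)$, hence with genuine slack — still land in the narrow band $[5b,6b]$; getting all of $[a,3a]$, $[-a,a]^{d-1}$ and $[5b,6b]$ simultaneously satisfied forces a careful calibration of how $a,b$ depend on $L,T$ and $k$, and a verification that the uncertainty intervals don't blow past the $[-5a,5a]^d\times[0,6b]$ confinement box. Everything else — restarting via the strong Markov property, symmetry, monotonicity, the final union bound — is routine given \eqref{C} and the tools already set up in the excerpt.
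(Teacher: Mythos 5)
There is a genuine gap, and it is exactly the point you dismiss at the end as ``bookkeeping rather than conceptual'': a concatenation of a \emph{fixed} number $k$ of \eqref{C}-blocks along a \emph{predetermined} direction sequence cannot land the arrival time in the band $[5b,6b]$. Each block only advances time by an amount in $[T,2T)$, so after $k$ blocks the total elapsed time is only known to lie in $[kT,2kT)$, an interval of width $kT$. For your scheme to work you would need $[kT,2kT)\subseteq[5b,6b]$, i.e.\ $5b\le kT$ and $2kT\le 6b$, which forces $2kT\le 6b\le \tfrac{6}{5}kT$ --- impossible, for every choice of $b$ and $k$. (The purely spatial part could indeed be calibrated non-adaptively: the first-coordinate increments lie in $[L+n,2L+n]$ and the transverse ones in $[0,2L)$ with choosable signs, and those windows can be fitted inside $[a,3a]\times[-a,a]^{d-1}$ for suitable $a\asymp kL$. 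It is the time coordinate that kills the fixed-route argument.) Consequently the construction must be \emph{adaptive}: the choice of the next block's direction (and, in effect, how many ``productive'' steps versus right--left pairs that roughly cancel spatially are used) has to depend on the realized positions and times of the previous blocks, the path being continued until the accumulated time first crosses $5b$ --- since each increment is $<2T\le b$, that first crossing automatically falls in $[5b,6b)$ --- while reflected steps steer the spatial coordinate into $[a,3a]\times[-a,a]^{d-1}$. This steering, applied via the strong Markov property at each completion time and with the blocks confined to prescribed space-time boxes so that successive stages use disjoint Poisson regions (which is what keeps the conditional success probability $\ge 1-\epsilon'$ at every stage even though the choices are adaptive), is precisely the ``somewhat tricky'' repeated use of the previous lemma that the paper invokes; the paper itself does not spell it out but refers to Bezuidenhout--Grimmett and to Proposition 2.22 of Liggett, where this dynamic guidance is carried out.

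Your other ingredients (restart at the stopping time when a block is completed, monotonicity, reflection and translation invariance, the product bound $(1-\epsilon')^k$ with $k$ bounded, and the identification of ``$T_y\circ f_n$ occupied'' with the required open paths) are correct and are indeed the same tools the cited proof uses; but as written the route-planning step fails, so the proposal does not yet constitute a proof of \eqref{B1}.
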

\begin{proof} We refer the reader to the proof of Proposition 2.22 in~\cite{lig99} or directly to the initial proof in~\cite{bezgrim}. The main idea is to use repeatedly (and in a somewhat tricky way) the previous lemma in order to guide our path. The essential arguments are the strong Markov property satisfied by $\left(\xi_t\right)$ and the fact that we have forced the open path to stay in large boxes so as to ensure that the Poisson processes involved in the procedure use disjoint time intervals. \end{proof}
\begin{figure}[!ht]
\begin{center}
\begin{tikzpicture}[scale=0.8]
 \draw [->] (-8,0)--(8,0);
 \draw (8,0) node[below right] {$x$};
 \draw [->,gray] (0,-1)--(0,7);
 \draw (0,7) node[left] {$t$};
  \draw (0,0) node[below left] {$0$};
\draw (-1.5,0) node[below] {$-a$};
\draw (1.5,0) node[below] {$+a$};
\draw (0,1) node{$-$} node[above left] {$b$};
\draw (4.5,0) node[below] {$3a$};
\draw (7.5,0) node[below] {$5a$};
\draw (0,6) node{$-$} node[above left] {$6b$};
\draw (0,5) node{$-$} node[left] {$5b$};
\draw[dashed] (1.5,0)--(1.5,6);
\draw[dashed] (4.5,0)--(4.5,6);
\fill[gray,opacity=0.5] (-1.5,0)--(-1.5,1)--(1.5,1)--(1.5,0)--cycle;
\fill[gray,opacity=0.2] (-7.5,0)--(-7.5,6)--(7.5,6)--(7.5,0)--cycle;
\fill[gray,opacity=0.5] (1.5,5)--(1.5,6)--(4.5,6)--(4.5,5)--(1.5,5)--cycle;
\draw [ultra thick] (-0.2,0.7)--(1.1,0.7);
\draw (0.55,0.7) node[below] {$(x,s)$};
\draw [ultra thick] (1.8,5.1)--(3.1,5.1);
\draw (2.35,5.1) node[above] {$(y,t)$};
\draw[->,>=latex] (0.55,0.7) to[out=90,in=-135] (6.35,2.8);
\draw[->,>=latex] (6.35,2.8) to[out=45,in=-90] (2.35,5.1);
\end{tikzpicture}
\end{center}
\caption{The block event of Lemma~\ref{Cbloc} (condition \eqref{B1}}
\label{dessinbloc} 
\end{figure}
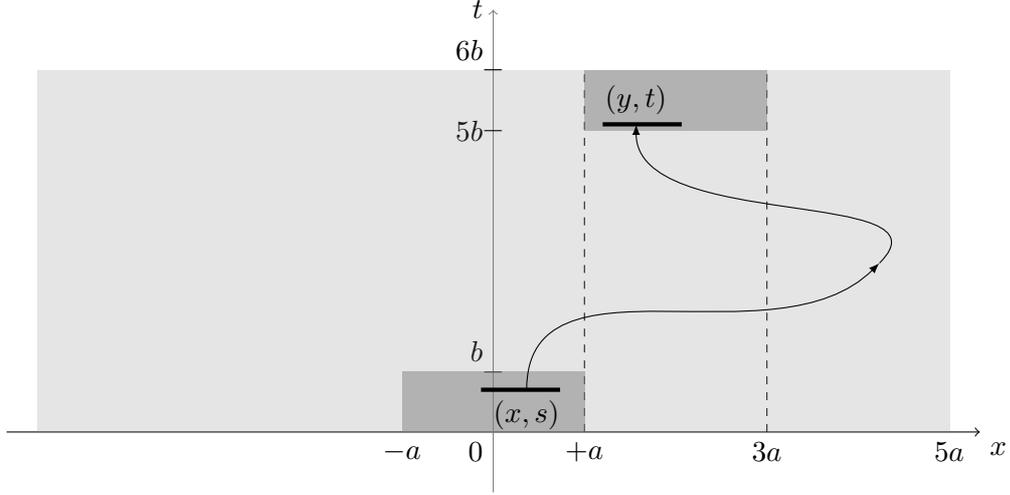

Using Lemmas~\ref{C1etC2}, \ref{lemC} and~\ref{Cbloc}, we obtain a part of Theorem~\ref{FST}. This will be the elementary brick for building a macroscopic oriented percolation process.

If the event in Figure~\ref{dessinbloc} occurs, we will say that the edge from the first gray cube to the second gray cube is open. But this event depends on the initial point $x$ in the first cube, and the next step of the construction depends on the location of the new departure, that is the center of the second cube. For this reason, we have to perform a dynamical renormalization: we thus define an oriented percolation by induction.

Let $\mathcal{L}=\left\{(j,k)\in\Z\times\N\text{ such that }j+k\text{ even}\right\}$ be the sites of the macroscopic grid of our renormalization process and $\mathcal{E}=\left\{(j,k)\rightarrow (j',k+1)\text{ with }|j-j'|=1\right\}$ its edges.

\begin{theorem}\label{construction} Let $\epsilon>0$. Suppose that the condition~\eqref{B1} is satisfied and let $n,a,b$ given by Theorem~\ref{FST}.
Then, there exists a random field $(W^k_e)_{e\in\mathcal{E},k\ge 1}$ taking values in $\{0,1\}$ and a filtration $(\mathcal{G}_k)_{k\ge 1}$ such that: 
\begin{itemize}
\item  $\forall k\ge 1,\forall e \in \mathcal{E}\quad W^k_e\in\mathcal{G}_k$;
\item $\forall k \ge 0,\forall e \in \mathcal{E}\quad \P[W^{k+1}_e=1|\mathcal{G}_k\vee \sigma(W^{k+1}_f, d(e,e')\ge 2)]\ge 1-\epsilon$;
\item if there exists an open path from $(0,0)$ to $(j,k)$ in $W$, then $\xi_{30bk}^{f_n}\not\equiv 0$;
\end{itemize}
where $\sigma(W^{k+1}_f, \; d(e,e')\ge 2)$ is the $\sigma$-field generated by the random variables $W^{k+1}_{e'}$, with $d(e,e')\ge 2$; and $f_n=\1_{[-n,n]^d}$.
\end{theorem}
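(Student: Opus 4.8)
The plan is to carry out the dynamical renormalization of Bezuidenhout and Grimmett, exactly in the spirit of \cite{bezgrim} and \cite[Prop.~2.22 and after]{lig99}; I will only describe how the block event \eqref{B1} is chained, and refer to those works for the remaining bookkeeping. One builds the field $(W^k_e)_{e,k}$, the filtration $(\mathcal{G}_k)_k$, and an auxiliary \emph{genealogy} by induction on $k$, keeping throughout the following inductive invariant: for every macroscopic site $(j,k)\in\mathcal{L}$ that is joined to $(0,0)$ by an open path of $W$ using only levels $\le k$, there is a $\mathcal{G}_k$-measurable space-time point $(Z_{j,k},S_{j,k})$ with $Z_{j,k}-3aj\,e_1\in[-a,a]^d$ and $S_{j,k}$ in a bounded range around $30bk$, such that $T_{Z_{j,k}}\circ f_n\le\xi_{S_{j,k}}^{f_n}$ (a ``seed'' of type $f_n=\1_{[-n,n]^d}$ sits at $Z_{j,k}$ at time $S_{j,k}$). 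The shape $(x,s)+[-n,n]^d\times\{0\}$ of both the source and the target of \eqref{B1} is precisely what makes the block event self-reproducing, which is what allows the chaining.

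For the inductive step, to each edge $e=\big((j,k)\to(j',k+1)\big)\in\mathcal{E}$ one attaches a space-time box $B_e$ of the type $(3aj\,e_1,30bk)+[-5a,5a]^d\times[0,6b]$, translated so that, when $(j,k)$ is wet, its bottom face is centred at the current seed $(Z_{j,k},S_{j,k})$, and translated by a fixed default otherwise. One sets $W^{k+1}_e=1$ when $(j,k)$ is wet \emph{and} the event of Lemma~\ref{Cbloc}, read inside $B_e$ with starting point the seed $(Z_{j,k},S_{j,k})$, occurs with a landing point $(Z_{j',k+1},S_{j',k+1})$ located in the half-space coding the neighbour $(j',k+1)$ (one uses \eqref{B1} for a ``right'' step and its mirror image for a ``left'' step). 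On success one records that landing point as the new seed, which, after reindexing physical time by the fixed per-level amount, again satisfies the invariant; this propagates the induction. Taking $\mathcal{G}_{k+1}=\mathcal{G}_k\vee\sigma\big(\text{Poisson marks in }\bigcup_e B_e\big)$ over the level-$(k+1)$ boxes makes $(\mathcal{G}_k)_k$ a filtration with $W^{k+1}_e\in\mathcal{G}_{k+1}$, which is the first bullet.

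The second bullet is obtained by combining two facts. First, conditionally on $\mathcal{G}_k$ all the boxes $B_e$ at level $k+1$ are located, and each $W^{k+1}_e$ is a function of $\mathcal{G}_k$ and of the Poisson marks inside $B_e$ only; for $d(e,e')\ge 2$ the portions of the Poisson field relevant to $e$ and to $e'$ are disjoint — this is exactly why \eqref{B1} forces the open paths to stay inside the enlarged boxes — so, conditionally on $\mathcal{G}_k$, $W^{k+1}_e$ is independent of $\sigma(W^{k+1}_{e'}:\ d(e,e')\ge2)$. Second, conditionally on $\mathcal{G}_k$, on the event that $(j,k)$ is wet, the strong Markov property of $(\xi_t)$ applied at the stopping time $S_{j,k}$, together with $T_{Z_{j,k}}\circ f_n\le\xi_{S_{j,k}}^{f_n}$, attractivity, and the spatial invariance of $\P$, identifies $\P[W^{k+1}_e=1\mid\mathcal{G}_k]$ from below with the probability in \eqref{B1}, hence $\ge1-\epsilon$; when $(j,k)$ is not wet one runs the same experiment in the default box so the bound persists. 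The two facts give $\P[W^{k+1}_e=1\mid\mathcal{G}_k\vee\sigma(W^{k+1}_{e'}:\ d(e,e')\ge2)]\ge1-\epsilon$. The third bullet is then immediate from the invariant: an open path of $W$ from $(0,0)$ to $(j,k)$ produces a seed $T_{Z_{j,k}}\circ f_n$ present at a time which — once the per-level physical increment has been fixed to $30b$ and, at a cost absorbed into $\epsilon$, the seed box is required to carry no death mark on the relevant interval — may be taken to equal $30bk$ exactly, so that $\xi_{30bk}^{f_n}\not\equiv0$.

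The main obstacle is the dynamical renormalization bookkeeping: one must define $W^{k+1}_e$ for \emph{every} edge, wet or not, so that it is measurable with respect to pairwise disjoint parcels of the Poisson field for edges at macroscopic distance $\ge 2$ while being genuinely driven, on wet sites, by the seed produced at level $k$. Getting this partial-order–respecting construction right — so that the conditional estimate of the second bullet holds simultaneously for all edges of a given level, with the precise geometry of the macroscopic lattice and the precise per-level time increment — is the delicate part, and it is exactly the content of the constructions of \cite{bezgrim} and \cite{lig99}, to which we refer for the details we have suppressed.
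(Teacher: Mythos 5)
There is a genuine gap, and it sits exactly at the point you flag as ``delicate'' and defer to \cite{bezgrim} and \cite{lig99}: the geometry you actually write down does not deliver the distance-$2$ conditional independence nor the $30b$ time increment of the statement. You attach to each macroscopic edge a \emph{single} block of the type of Lemma~\ref{Cbloc}, i.e.\ a box of spatial half-width $5a$ and temporal height $6b$, with macroscopic sites spaced of order $3a$ apart. Then two edges at macroscopic distance $2$ have boxes whose centres are of order $6a$ apart while each box has width $10a$, so the ``portions of the Poisson field relevant to $e$ and to $e'$'' are \emph{not} disjoint, and the claimed conditional independence given $\mathcal{G}_k$ fails. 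This is not bookkeeping that can be outsourced: the whole point of the paper's proof of Theorem~\ref{construction} is to resolve this tension between the reach of one block event (a horizontal displacement of only $[a,3a]$) and the spacing needed for disjointness. The paper does it by composing six elementary blocks along a diagonal, setting $\mathcal{S}^6_{j,k}=\mathcal{S}_{6j,6k}$ and $\mathcal{A}^6_{j,k}=\bigcup_{i=0}^5\mathcal{A}_{6j+i,6k+i}$ (and the mirrored $\tilde{\mathcal{A}}^6_{j,k}$), so that macroscopic sites are $12a$ apart, the tubes explored by edges at distance $\ge 2$ are disjoint, and each macroscopic level consumes $6\times 5b=30b$ units of time --- which is precisely where the $30bk$ in the third bullet comes from, via the strengthened block estimate \eqref{finalbloc} obtained by iterating \eqref{B1} with the strong Markov property. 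Your proposal contains no substitute for this composition step, and with one block per level your increment is at most $6b$, not $30b$.

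Two further points. Your fix for the third bullet --- forcing the seed to persist to time $30bk$ exactly by requiring ``no death mark on the relevant interval, at a cost absorbed into $\epsilon$'' --- does not work: that event has probability of order $e^{-c\,n^d b}$ for the already-fixed $n,b$, which is in no way close to $1$ and cannot be absorbed into $\epsilon$. It is also unnecessary: an open path in the graphical representation from $f_n\times\{0\}$ to a seed located at a time $\ge 30bk$ forces some site to be alive at every intermediate time, in particular at $30bk$, which is all the third bullet asserts. Finally, for edges whose starting site carries no seed, the paper simply sets $W^{k+1}_e$ equal to an auxiliary independent Bernoulli$(1-\epsilon)$ variable $B^{k+1}_e$ rather than running a ``default box'' experiment; your default-box variant would only aggravate the overlap problem above. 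In short, the inductive seed-tracking skeleton ($Y_{j,k}$ versus your $(Z_{j,k},S_{j,k})$, the cemetery state, the filtration) matches the paper, but the concrete renormalization scale --- the six-block diagonal composite that simultaneously produces the $30b$ increment and the disjointness behind the second bullet --- is missing, and the quantitative choices you do make are inconsistent with the properties you claim.
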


\begin{proof}
We consider a contact process with aging $(\xi_t)$ with initial configuration $f_n=\1_{[-n,n]^d}$.

An open edge in the macroscopic grid will match with the existence of a good open path in the microscopic grid in such a way that the survival of a percolation in $\mathcal{L}$ leads to the survival of the corresponding CPA.
An open path between two fixed points occurs with small probability; consequently, the vertices of the macroscopic grid will not be points but zones of the microscopic grid.

For $(j,k)\in\mathcal{L}$, the event block of Lemma~\ref{Cbloc} leads us to introduce
\[\mathcal{S}_{j,k}=[a(2j-1),a(2j+1)]\times [-a,a]^{d-1}\times[b5k,b(5k+1)],\] the shaded area corresponding to the starting and ending zones of paths, and  \[\mathcal{A}_{j,k}=[a(2j-5),a(2j+5)]\times [-a,a]^{d-1}\times[b5k,b5(k+1)],\] the area where we look for paths from $\mathcal{S}_{j,k}$ to $\mathcal{S}_{j+1,k+1}$. 

 In order to control the dependence between the explored zones we will rather consider a diagonal block composed by $6$ elementary blocks. Thus, our new block event will be the following: if we set for all $(j,k)\in\mathcal{L}$
 \[ \mathcal{S}^6_{j,k}=\mathcal{S}_{6j,6k},~ \mathcal{A}_{j,k}^6=\bigcup_{i=0}^5 \mathcal{A}_{6j+i,6k+i}\text{ and } \tilde{\mathcal{A}}_{j,k}^6=\bigcup_{i=0}^5 \mathcal{A}_{6j-i,6k+i},\]
then, for $(x,s)\in \mathcal{S}^6_{j,k}$ one has
\begin{align}
\label{finalbloc}\tag{B2}
\P\left( \begin{array}{c}
          \exists(y,t)\in\mathcal{S}^6_{j+1,k+1}\text{ and paths staying in }\mathcal{A}_{j,k}^6\text{ and going from}\\
          (x,s)+[-n,n]^d\times\{0\}\text{ to every point in } (y,t)+[-n,n]^d\times\{0\}
         \end{array}
\right)> 1-\epsilon.
\end{align}
and (event in the other direction)
\begin{align*}
\P\left( \begin{array}{c}
          \exists(y,t)\in\mathcal{S}^6_{j-1,k+1}\text{ and paths staying in }\tilde{\mathcal{A}}^6_{j,k}\text{ and going from}\\
           (x,s)+[-n,n]^d\times\{0\}\text{ to every point in } (y,t)+[-n,n]^d\times\{0\}
         \end{array}
\right)> 1-\epsilon.
\end{align*}

Now we want to define random variables $(W^k_e)_{k\geq1, e\in \mathcal{E}}$ encoding states (open or close) of oriented edges between macroscopic sites. To do so, we must keep track of points  $Y_{j,k}\in\Z^d\times\R^+$ in the microscopic grid such that if there exists an open path in $(W^k_e)_{k\geq1, e\in \mathcal{E}}$ from the origin to the site $(j,k)$, then we have an open path from the origin of microscopic grid to $Y_{j,k}$ and we start from $Y_{j,k}$ to keep looking for an infinite path.

As the macroscopic percolation lives in $\Z\times \N$, we will denote an edge $e$ by its extremity $j$ and its direction $+$ or $-$.
\begin{figure}[!ht]
\begin{center}
\begin{tikzpicture}[scale=0.8]
 \draw [->] (-7.5,5)--(-6.3,5);
 \draw (-6.3,5) node[below] {$\mathbb{Z}^d$};
 \draw [->] (-7.3,4.8)--(-7.3,5.8);
 \draw (-7.3,5.8) node[right] {$\mathbb{R}^+$};
\fill[gray,opacity=0.5] (-1,0)--(-1,1)--(1,1)--(1,0)--cycle;
\fill[gray,opacity=0.5] (5,0)--(5,1)--(7,1)--(7,0)--cycle;
\fill[gray,opacity=0.5] (-7,0)--(-7,1)--(-5,1)--(-5,0)--cycle;
\fill[gray,opacity=0.2] (-7.5,0)--(-7.5,6)--(7.5,6)--(7.5,0)--cycle;
\fill[gray,opacity=0.5] (2,5)--(2,6)--(4,6)--(4,5)--(2,5)--cycle;
\fill[gray,opacity=0.5] (-2,5)--(-2,6)--(-4,6)--(-4,5)--(-2,5)--cycle;
\draw [ultra thick] (0.1,0.7)--(0.8,0.7) node[midway] {$\bullet$} ;
\draw (-2,2) to[bend left] (0.44,0.75);
\draw (-2,2) node[left] {$Y_{j,k}$};
\draw [ultra thick] (6.3,0.9)--(7,0.9) ;
\draw[->,>=latex] (1.95,0) to[out=45,in=-90] (0.65,0.7);
\draw[->,>=latex] (0.65,0.7) to[out=90,in=-135] (6.35,2.8);
\draw[->,>=latex] (6.35,2.8) to[out=45,in=-90] (2.65,5.5);
\draw [ultra thick] (2.1,5.5)--(2.8,5.5) node[midway] {$\bullet$} ;
\draw[] (1.4,4.4) to[bend right] (2.44,5.45);
\draw (1.4,4.4) node[left] {$Y_{j+1,k+1}$};
\draw[->,>=latex,dashed] (6.65,0.9) to[out=60,in=-45] (3.35,3.1);
\draw[->,>=latex,dashed] (3.35,3.1) to[out=120,in=-90] (3.6,5.1) ;
\draw [ultra thick, dashed] (3.25,5.1)--(3.95,5.1);
\draw (-0.5,0.5) node {\bf \textcolor{red}{$\mathcal{S}^6_{j,k}$}};
\draw (6,0.5) node {\bf \textcolor{red}{$\mathcal{S}^6_{j+2,k}$}};
\draw (3.8,5.7) node {\bf \textcolor{red}{$\mathcal{S}^6_{j+1,k+1}$}};
\draw (-3,5.5) node {\bf \textcolor{red}{$\mathcal{S}^6_{j-1,k+1}$}};
\draw[red] (-6,0.5) node {\bf $\mathcal{S}^6_{j-2,k}$};
\end{tikzpicture}
\begin{tikzpicture}[scale=0.8]
\draw [->] (7.2,5)--(8.4,5);
 \draw (8.4,5) node[below] {$\mathbb{Z}$};
 \draw [->] (7.4,4.8)--(7.4,5.8);
 \draw (7.4,5.8) node[right] {$\mathbb{N}$};
\draw (9,0) node {$\bullet$};
\draw (15,0) node {$\bullet$};
\draw (21,0) node {$\bullet$};
\draw (18,5) node {$\bullet$};
\draw (12,5) node {$\bullet$};
\draw (9,0) node[below] {$(j-2,k)$};
\draw (15,0) node[below] {$(j,k)$};
\draw (21,0) node[below] {$(j+2,k)$};
\draw (18,5) node[above] {$(j+1,k+1)$};
\draw (12,5) node[above] {$(j-1,k+1)$};
\draw[->,>=latex] (15.1,0.1) -- (17.9,4.9) node[midway,above,sloped] {$W^{k+1}_{j,+}$};
\draw[->,>=latex,dashed] (20.9,0.1) -- (18.1,4.9) node[midway,above,sloped] {$W^{k+1}_{j+2,-}$};
\draw[->,>=latex,gray,opacity=0.5] (9.1,0.1) to (11.9,4.9);
\draw[->,>=latex,gray,opacity=0.5] (14.9,0.1) to (12.1,4.9);
\end{tikzpicture}
\end{center}
\caption{Percolation background process}
\label{backproc}
\end{figure}
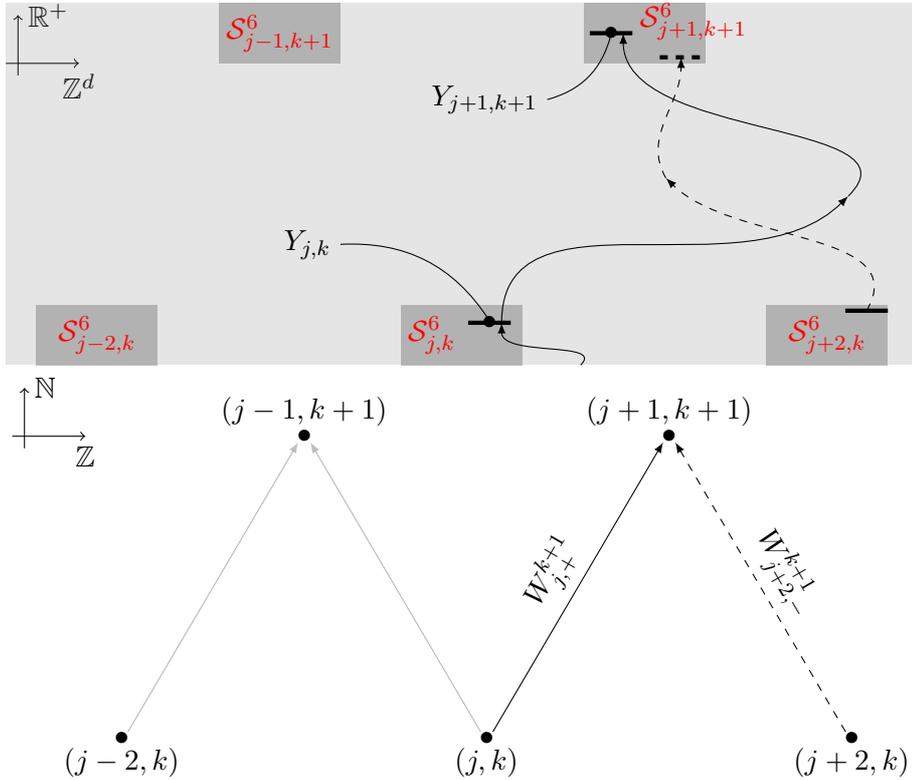

We want to define $Z_k(j)=(W^k_{j,+},W^k_{j+2,-},Y_{j,k})$ for $(j,k)\in\mathcal{L}$, with $W^k_{j,u}$ taking values in $\{0,1\}$ and $Y_{j,k}$ taking values in  $\Z^d\times\R^+\cup \{\dagger\}$ where $\dagger$ represents a cemetery point. We will need $(B^k_{j,u})_{k\geq1, j\in \Z,u=\pm}$ independent Bernoulli random variables of parameter $1-\epsilon$ (and independent of the rest of the construction).

We define the process by induction on $k\in\N$ (the construction is illustrated by Figure~\ref{backproc}):
\begin{itemize}
 \item $k=0$ : let $Y_{0,0}=(0,0)$ and $Y_{i,0}=\dagger$ for $i\neq 0$. This expresses the fact that every point in $[ -n,n]^d$ is alive at time $0$ (and only them).
  \item $k\Rightarrow k+1$ : suppose that the process $(Z_m(j))_{j,m}$ is defined for $j\in\Z$ and $0\leq m\leq k$, and let us define $(Z_{k+1}(j))_{j\in\Z}$. Let $j\in\Z$.
  \begin{itemize}
  \item If $Y_{j,k}=Y_{j+2,k}=\dagger$, then $W^{k+1}_{j,+}=B^{k+1}_{j,+}$, $W^{k+1}_{j+2,-}=B^{k+1}_{j+2,-}$ and $Y_{j+1,k+1}=\dagger$.
   \item Suppose $Y_{j,k}\neq \dagger$; if there exists $X\in \mathcal{S}^6_{j+1,k+1}$ such that there are paths from $Y_{j,k}+[-n,n]^d$ to every point in $X+[-n,n]^d\times\{0\}$ staying in $\mathcal{A}_{j,k}^6$, then $W^{k+1}_{j,+}=1$. $X$ is not necessarily unique so we choose $X_{j+1,k+1}$ as the first one in time and according to some fixed order in space. Otherwise, $W^{k+1}_{j,+}=0$ and $X_{j+1,k+1}=0^{d+1}$.
 \item  Suppose $Y_{j+2,k}\neq \dagger$; if there exists $X\in \mathcal{S}^6_{j+1,k+1}$ such that there are paths from $Y_{j+2,k}+[-n,n]^d$ to every point in $X+[-n,n]^d\times\{0\}$ staying in $\tilde{\mathcal{A}}_{j+2,k}^6$, then $W^{k+1}_{j+2,-}=1$. $X$ is not necessarily unique so we choose $\tilde{X}_{j+1,k+1}$ as the first one in time and according to some fixed order in space. Otherwise, $W^{k+1}_{j+2,-}=0$ and $\tilde{X}_{j+1,k+1}=0^{d+1}$.
 \item Finally \[ Y_{j+1,k+1}=\left\{\begin{array}{cc}
                                       \dagger\phantom{aaaaaaaaa}               & \text{ if }X_{j+1,k+1}=\tilde{X}_{j+1,k+1}=0^{d+1}\\
                                       \max\{ X_{j+1,k+1},\tilde{X}_{j+1,k+1}\} & \text{ else,}
                                      \end{array}\right.
\]
where the maximum is taken according to the time coordinate. The first case reflects the fact that there is no open path from the origin to the zone $\mathcal{S}^6_{j+1,k+1}$; the second case reflects the fact that there is at least one such path and we choose the farthest reached point to restart in the next step of construction.
 \end{itemize}
\end{itemize}
As intended, if there exists an infinite open path in the percolation diagram defined by the $(W^{k}_e)_{k\geq 1,e\in\mathcal{E}}$ then $(\xi_t^{f_n})$ survives in $\Z\times[-5a,5a]^d\times\R^+$; it is easy to see that the $(B^k_e)_{k,e}$ do not impact. Now we prove that this event has positive probability. 
Let $\mathcal{G}_k=\sigma\big(Z_l(i),i\in\Z,0\leq l \leq k\big)$; $\mathcal{G}_k$ contains the discovered area to construct the $k$ first steps of the percolation, so $\mathcal{F}_{30kb}\subset\mathcal{G}_k\subset \mathcal{F}_{(30k+1)b}$. For $(j,k)\in\mathcal{L}$, one has
\begin{align*}
&\P\left( W^{k+1}_{j,+}=1|\mathcal{G}_k\right)=\1_{\{Y_{j,k}=\dagger\}}\P(B^{k+1}_{j,+}=1)\\
&\phantom{\P}+\1_{\{Y_{j,k}\neq \dagger\}}\P\left( \begin{array}{c}
                                                   \exists X\in\mathcal{S}^6_{j+1,k+1}\text{ and paths staying in }\mathcal{A}_{j,k}^6\text{ and going}\\
                                                    \text{ from }Y_{j,k}+[-n,n]^d\times\{0\}\text{ to every point in } X+[-n,n]^d\times\{0\}
                                                    \end{array}
\right).
\end{align*}
So, using the block events \eqref{finalbloc} it holds that 
\[\P(W^{k+1}_{j,+}=1|\mathcal{G}_k)>(1-\epsilon).\]
We have that, conditioned on $\mathcal{G}_k$, the collection of variables $\{W^{k}_e,k\ge 1,e\in\mathcal{E}\}$ is locally dependent because the constructed paths stay in the $(\mathcal{A}_{k,i}^6)_{i\in\Z}$ or the $(\tilde{\mathcal{A}}_{k,i}^6)_{i\in\Z}$. This achieves the proof of Theorem~\ref{construction}.
\end{proof}
Now that the percolation process is constructed, we conclude this section by the converse in Theorem~\ref{FST}:
\begin{corollary}
Suppose that the condition \eqref{B1} is satisfied. Then $(\xi_t)$ is supercritical.
\end{corollary}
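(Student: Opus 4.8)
The plan is to derive the supercriticality of $(\xi_t)$ from the macroscopic field $(W^k_e)$ built in Theorem~\ref{construction}, by comparing that field with a genuine supercritical oriented percolation and then transferring survival back to the microscopic contact process.

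First I would fix $\epsilon>0$ (to be chosen small at the end) and apply Theorem~\ref{construction}: since \eqref{B1} is assumed, with the parameters $n,a,b$ it provides we obtain the $\{0,1\}$-valued field $(W^k_e)_{e\in\mathcal{E},k\ge1}$, the filtration $(\mathcal{G}_k)_k$, and the three listed properties. The decisive one is the conditional lower bound
\[\P\big(W^{k+1}_e=1\,\big|\,\mathcal{G}_k\vee\sigma(W^{k+1}_f:\ d(e,f)\ge 2)\big)\ge 1-\epsilon,\]
which displays $(W^k_e)$ as a finite-range-dependent field of density at least $1-\epsilon$. I would then invoke the Liggett–Schonmann–Stacey domination theorem (in the form recalled in~\cite{lig99}): such a field stochastically dominates a Bernoulli product field on $\mathcal{E}$ of parameter $p(\epsilon)$, with $p(\epsilon)\to1$ as $\epsilon\to0$.

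Next I would pick $\epsilon$ small enough that $p(\epsilon)>\vec{p}_c$, the critical threshold for oriented percolation on the lattice $\mathcal{L}$. For that $\epsilon$, the dominated product percolation contains an infinite open path from the origin with positive probability, hence so does $(W^k_e)$; and on that event the third property of Theorem~\ref{construction} furnishes, for every $k\ge0$, a macroscopic vertex $(j,k)$ joined to $(0,0)$, so that $\xi_{30bk}^{f_n}\not\equiv0$ where $f_n=\1_{[-n,n]^d}$. Since $\{\xi_t\equiv0\}$ is absorbing for the CPA (no birth is possible once every site is dead, as $\lambda_0=0$), non-emptiness at all the times $30bk$ forces $\xi_t^{f_n}\not\equiv0$ for every $t>0$; thus $\P(\forall t>0,\ \xi_t^{f_n}\not\equiv0)>0$. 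The last step is to drop the dependence on the initial configuration: because survival does not depend on the finite initial function (Section~\ref{Ssurvival}), survival from $f_n$ with positive probability yields survival from $\delta_0$ with positive probability, i.e.\ $(\xi_t)$ is supercritical.

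I expect the main obstacle to be the domination step: one must check that the dependence structure coming out of the dynamical renormalization—conditionally on $\mathcal{G}_k$, the variables $\{W^{k+1}_e\}_e$ have range of dependence below $2$ in the metric $d$, and each has conditional probability at least $1-\epsilon$ of being $1$—is exactly the one handled by the comparison theorem, and that the guaranteed density $p(\epsilon)$ can really be raised above $\vec{p}_c$ by shrinking $\epsilon$. The remaining ingredients (existence of the infinite open path, the absorbing-state argument turning discrete-time non-emptiness into $\xi_t\not\equiv0$ for all $t$, and the transfer to $\delta_0$) are routine given the results already established.
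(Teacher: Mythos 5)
Your argument is correct and follows essentially the same route as the paper: the paper also dominates the field $(W^k_e)$ by an independent supercritical oriented percolation (citing Lemma~2.4 of~\cite{DOP} for the locally dependent class $\mathcal{C}_d(M,q)$, which plays exactly the role of the Liggett--Schonmann--Stacey comparison you invoke) and then transfers survival back to the CPA through the coupling. The extra details you supply (the absorbing-state argument at the times $30bk$ and the transfer from $f_n$ to $\delta_0$ via the independence of survival from the finite initial configuration) are left implicit in the paper but are exactly the intended steps.
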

\begin{proof}
In the previous construction, we have coupled $(\xi_t)$ with $(W^{k}_e)$. The process $(W^{k}_e)$ is in the class $\mathcal{C}_d(M,q)$ (with $d=1,M=2,q=1-\epsilon$) introduced by Garet and Marchand in~\cite{DOP}.
We conclude by using Lemma 2.4 in the latter reference to couple our process with an independent oriented percolation process of parameter $g(1-\epsilon)$, where $g$ is a function from $[0,1]$ to $[0,1]$ with $\lim_{x\rightarrow 1} g(x)=1$. If we choose $1-\epsilon$ large enough, the independent percolation underlying survives, so our process $(\xi_t)$ survives as well. \end{proof}

\section{Consequences}\label{Scontrols}
\subsection{A \textit{dies out}}
A first easy consequence of the construction of Section~\ref{Sbezgrim} is the following result.
\begin{theorem} The two-stage contact process of Krone (\cite{krone}) dies out, that is for fixed $\gamma$, $\P_{\lambda_c(\gamma)}(\forall t>0,~\xi_t^{0(1)}\not\equiv0)=0$. \end{theorem}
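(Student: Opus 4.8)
The plan is to use the two-way characterisation of supercriticality by the \emph{finite} space-time condition \eqref{B1} (Theorem~\ref{FST}): the point is that \eqref{B1}, and more precisely the intermediate conditions \eqref{C1}--\eqref{C2}, only involve the graphical construction inside a \emph{compact} space-time box and are \emph{increasing} events in the infection rate, so the probabilities appearing in them cannot jump upward across the critical value. For a fixed $\gamma$ and a parameter $\lambda$, Krone's two-stage contact process is realised as a contact process with aging whose only $\lambda$-dependent ingredient in the graphical construction is the family of birth arrows, which have rate $\lambda$; by the proposition of Section~\ref{Ssurvival} (survival is independent of the non-empty finite initial configuration), survival from $0(1)$ and survival from $0(2)$ are equivalent, so that each amounts to the supercriticality of this CPA. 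I will argue by contradiction, assuming that at $\lambda=\lambda_c(\gamma)$ the process survives with positive probability.

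First I would run the ``supercritical $\Rightarrow$ finite space-time condition'' half of Section~\ref{Sbezgrim} at the parameter $\lambda_c(\gamma)$: fixing $\epsilon>0$ and applying Theorem~\ref{C1etC2} with $f_n=\1_{[-n,n]^d}$ produces integers $n,L,T$ such that $\P_{\lambda_c(\gamma),\gamma}(E_1^{n,L,T})>1-\epsilon$ and $\P_{\lambda_c(\gamma),\gamma}(E_2^{n,L,T})>1-\epsilon$, where $E_1^{n,L,T}$ and $E_2^{n,L,T}$ are the events of \eqref{C1} and \eqref{C2}. Both of these are increasing events that are measurable with respect to the Poisson marks of the graphical representation lying in the compact box $[-(L+2n),L+2n]^d\times[0,T+1]$, and they depend on $\lambda$ only through the birth arrows of rate $\lambda$.

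The key step is then to show that $\lambda\mapsto\P_{\lambda,\gamma}(E_i^{n,L,T})$ is lower semicontinuous from the left at $\lambda_c(\gamma)$, for $i=1,2$. I would couple all of Krone's processes with parameters $\lambda\le\bar\lambda$, for a fixed $\bar\lambda>\lambda_c(\gamma)$, on a single probability space: keep the death processes (rate $1$) and maturation processes (rate $\gamma$) common, put on each edge a rate-$\bar\lambda$ Poisson process with i.i.d.\ uniform marks, and declare the birth arrows of the $\lambda$-process to be the points whose mark is at most $\lambda/\bar\lambda$, exactly as in the thinning construction of the model. Under this coupling the birth-arrow configuration inside the box is nondecreasing in $\lambda$, hence so are the events $E_i^{n,L,T}$. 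Since the box carries almost surely finitely many potential arrows and, almost surely, none of their marks equals $\lambda_c(\gamma)/\bar\lambda$, any configuration realising $E_i^{n,L,T}$ at $\lambda_c(\gamma)$ already realises it at every $\lambda$ sufficiently close to $\lambda_c(\gamma)$ from below; by monotone convergence $\P_{\lambda,\gamma}(E_i^{n,L,T})\uparrow\P_{\lambda_c(\gamma),\gamma}(E_i^{n,L,T})$ as $\lambda\uparrow\lambda_c(\gamma)$. Consequently there is some $\lambda<\lambda_c(\gamma)$ for which \eqref{C1} and \eqref{C2} both hold (each holds on a left-neighbourhood of $\lambda_c(\gamma)$, so a common $\lambda$ exists).

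Finally I would close the loop using the rest of Section~\ref{Sbezgrim}: Lemma~\ref{lemC} converts \eqref{C1}--\eqref{C2} into \eqref{C}, Lemma~\ref{Cbloc} converts \eqref{C} into \eqref{B1}, and the converse half of Theorem~\ref{FST} converts \eqref{B1} into supercriticality — all at the parameter $\lambda<\lambda_c(\gamma)$. By the proposition of Section~\ref{Ssurvival} this gives $\P_{\lambda,\gamma}(\forall t>0,~\xi_t^{0(2)}\not\equiv 0)>0$ with $\lambda<\lambda_c(\gamma)$, contradicting the definition of $\lambda_c(\gamma)$ as the infimum of the $\lambda$ for which this probability is positive; hence $\P_{\lambda_c(\gamma)}(\forall t>0,~\xi_t^{0(1)}\not\equiv 0)=0$. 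I expect the only delicate point to be the left lower semicontinuity step: one must make sure that $E_1^{n,L,T}$ and $E_2^{n,L,T}$ really are determined by finitely many Poisson points of a compact region — so that the ``mark exactly equal to the threshold'' event is null — and that they are monotone under the thinning coupling; the remainder is a mechanical concatenation of results already proved together with the definition of $\lambda_c(\gamma)$.
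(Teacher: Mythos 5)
Your proof is correct and takes essentially the same route as the paper: the paper's two-line argument likewise rests on Theorem~\ref{FST}, noting that the finite space-time condition is determined by finitely many Poisson points in a compact box and hence depends continuously on $\lambda$, so the supercritical region is open and cannot contain $\lambda_c(\gamma)$. Your thinning-coupling argument is just the detailed justification of that continuity (together with the reduction from $0(1)$ to $0(2)$ via the initial-configuration proposition), which the paper leaves implicit.
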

\begin{proof} By Theorem~\ref{FST}, the supercriticality is characterized by a finite space-time condition which depends continuously on $\lambda$ and $\gamma$. The supercritical region is an open subset of $\R^{2}$ and for a critical value $\lambda_c(\gamma)$ the process can not survive.
\end{proof}

\subsection{At most linear growth}
We start this section by an independent but fundamental result. We show that the growth of the CPA is at most linear thanks to a comparison with Richardson model.

The Richardson model $(\eta_t)_{t\ge0}$ of parameter $\lambda$ is the continuous-time Markov process taking its values in $\mathcal{P}(\Z^d)$, such that:
\begin{itemize} 
 \item a dead site $z$ becomes alive at rate $\lambda\sum_{\|z-z'\|_1=1} \eta_t(z')$,
 \item a living site never dies.
\end{itemize}
Thanks to the graphical construction, we can build a coupling between the contact process with aging of parameters $(\Lambda,\gamma)$ and the Richardson model with parameter $\lambda_{\infty}$ such that at any time $t$, the set of living points in the contact process with aging is contained in the set of living points in the Richardson model. For this, the Richardson process does not see the deaths and the maturations and can use all the arrows. This leads to the following result.

\begin{lemma}\label{richardson}
Let $H_t^f=\cup_{s\le t} A_s^f$.
 There exist $A,B,M>0$ such that for every $f:\Z^d\to\N$ and every $t>0$
 \begin{align*}
\P(H_t^f\nsubseteq B_{Mt})\leq \P(\eta_t^{\supp f}\nsubseteq B_{Mt})\leq A\exp(-Bt).
 \end{align*}
\end{lemma}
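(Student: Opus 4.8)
The plan is to deduce everything from the coupling with the Richardson model stated just above, together with a standard linear-growth estimate for the latter. For the first inequality I would note that in that coupling $A_s^f\subseteq\eta_s^{\supp f}$ for every $s\ge 0$, and since a Richardson particle never dies the map $s\mapsto\eta_s^{\supp f}$ is non-decreasing; hence $H_t^f=\bigcup_{s\le t}A_s^f\subseteq\eta_t^{\supp f}$, so that $\{H_t^f\nsubseteq B_{Mt}\}\subseteq\{\eta_t^{\supp f}\nsubseteq B_{Mt}\}$. It then remains to prove the second inequality, and for this I would first reduce to a single starting site: by additivity of the Richardson model $\eta_t^{\supp f}=\bigcup_{x\in\supp f}\eta_t^{\{x\}}$, and by spatial translation invariance $\eta_t^{\{x\}}$ has the law of $x+\eta_t^{\{0\}}$, so a union bound over the finitely many points of $\supp f$ reduces the claim to an estimate of the form $\P(\eta_t^{\{0\}}\nsubseteq B_{Mt})\le A\exp(-Bt)$, where I would take $B_r$ to be the $\ell^1$-ball of radius $r$ (any fixed norm works up to adjusting $M$).

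The heart of the matter is a stochastic domination of the Richardson passage times. I would realise the Richardson model of rate $\lambda_\infty$ by attaching to each oriented edge $(y,z)$ of $\Z^d$ an independent Poisson process of intensity $\lambda_\infty$, a ring of the $(y,z)$-clock turning $z$ alive as soon as $y$ is alive (this produces exactly the Richardson dynamics, since a site with $k$ living neighbours is then infected at rate $k\lambda_\infty$). Writing $T(x)$ for the infection time of $x$ and fixing a shortest lattice path $0=x_0,x_1,\dots,x_{\|x\|_1}=x$, the strong Markov property applied at the successive stopping times $T(x_i)$ together with the lack of memory of the exponential clocks shows that $T(x)$ is stochastically dominated by a sum $S_{\|x\|_1}$ of $\|x\|_1$ i.i.d.\ $\mathrm{Exp}(\lambda_\infty)$ variables, i.e.\ $\P(T(x)\le t)\le\P(S_{\|x\|_1}\le t)=\P(\mathrm{Poisson}(\lambda_\infty t)\ge\|x\|_1)$. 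Since $\{\eta_t^{\{0\}}\nsubseteq B_{Mt}\}=\{\exists x,\ \|x\|_1>Mt,\ T(x)\le t\}$, a union bound gives
\[
\P\!\left(\eta_t^{\{0\}}\nsubseteq B_{Mt}\right)\le\sum_{n>Mt}\Card\{x:\|x\|_1=n\}\;\P\big(\mathrm{Poisson}(\lambda_\infty t)\ge n\big)\le\sum_{n>Mt}Cn^{d-1}\,\P\big(\mathrm{Poisson}(\lambda_\infty t)\ge n\big),
\]
and a Chernoff bound on the Poisson tail shows that, once $M$ is chosen large enough compared with $\lambda_\infty$, $\P(\mathrm{Poisson}(\lambda_\infty t)\ge n)\le e^{-cn}$ for all $n>Mt$ and some $c=c(M,\lambda_\infty)>0$; summing $\sum_{n>Mt}Cn^{d-1}e^{-cn}$ then yields the wanted bound $A\exp(-Bt)$.

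I expect the only genuinely delicate step to be making the domination $T(x)\preceq S_{\|x\|_1}$ fully rigorous: because of the ``number of living neighbours'' factor the Richardson model is not literally an i.i.d.\ first-passage percolation, so one must carefully justify — exploring a fixed shortest path step by step and invoking the strong Markov property and memorylessness at each $T(x_i)$ — that the successive increments are dominated by independent exponentials. Everything else (the coupling inclusion, the reduction to one site, the union bound, and the Chernoff estimate) is routine. A minor bookkeeping point is to fix the convention for $B_{Mt}$ when $\supp f$ is large: either centre the ball at $\supp f$, or absorb the bounded displacement of the starting points into the constant $M$.
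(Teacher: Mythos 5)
Your first inequality (coupling plus the monotonicity of the Richardson model) is fine, and note that for the second inequality the paper itself does not give an argument: it simply quotes~\cite{richardson} and~\cite{GMPCRE}. So you are attempting more than the paper, but the key step of your attempt is wrong as stated. The domination you can actually prove by exploring a fixed shortest path $0=x_0,\dots,x_{\|x\|_1}=x$ with the strong Markov property is $T(x)\preceq S_{\|x\|_1}$: once $x_i$ is infected, $x_{i+1}$ is infected within an $\mathrm{Exp}(\lambda_\infty)$ time, so $\P\big(T(x)>t\big)\le\P\big(S_{\|x\|_1}>t\big)$. This is the \emph{at least} linear growth direction, and it gives $\P\big(T(x)\le t\big)\ge\P\big(S_{\|x\|_1}\le t\big)$ — the reverse of the inequality you feed into your union bound. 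The inequality you need, $\P(T(x)\le t)\le\P(S_{\|x\|_1}\le t)$, is false: $T(x)$ is the minimum over exponentially many paths of sums of i.i.d.\ exponentials, and it is typically much smaller than the passage time of any single fixed path (the first-passage time constant for $\mathrm{Exp}(\lambda_\infty)$ edge weights is strictly smaller than $1/\lambda_\infty$ per step); for $t$ slightly below $\|x\|_1/\lambda_\infty$ your right-hand side is exponentially small while the left-hand side tends to $1$. Consequently the sum over endpoints with the polynomial factor $Cn^{d-1}$ cannot work: the entropy of paths has been dropped.

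The standard repair keeps your framework but does the union bound over \emph{paths} rather than endpoints. If $\eta_t^{\{0\}}\nsubseteq B_{Mt}$, there is a self-avoiding path of some length $n>Mt$ starting at $0$ along which the successive edge clocks ring in order within total time $t$; for a fixed path of length $n$ this has probability at most $\P(S_n\le t)=\P\big(\mathrm{Poisson}(\lambda_\infty t)\ge n\big)$, and there are at most $(2d)^n$ such paths, so $\P\big(\eta_t^{\{0\}}\nsubseteq B_{Mt}\big)\le\sum_{n>Mt}(2d)^n\,\P\big(\mathrm{Poisson}(\lambda_\infty t)\ge n\big)$. A Chernoff bound with parameter $\theta>1+\log(2d)$ makes each summand at most $e^{-c n}$ once $M$ is large compared with $d$ and $\lambda_\infty$, giving $A\exp(-Bt)$. (Your closing remark about centering is also more than bookkeeping if you insist on constants uniform in $f$: the union bound over $\supp f$ introduces a factor $\Card(\supp f)$, so one should state the estimate for a ball centred at the starting set, which is exactly how the lemma is used later, e.g.\ in the proof of Lemma~\ref{controleboite}.) Alternatively, you could simply invoke the reference, as the paper does.
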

The Richardson model is described in~\cite{richardson} and the proof of the second inequality of the lemma is detailed in~\cite{GMPCRE}.

\subsection{At least linear growth}
From now, we fix $\Lambda,\gamma$ such that $(\xi_t)$ survives with positive probability, so the condition~\eqref{B1} is satisfied. We fix an initial configuration $f:\Z^d\to\N$ with finite support. Let $\epsilon$ large enough, let $n,a,b$ be the parameters given by Theorem~\ref{FST} and denote by $W$ the percolation process (surviving) constructed in Theorem~\ref{construction} of Section~\ref{Sbezgrim}. We set, for $n\in\N$ and $x,y\in\Z$
\begin{align*}
\eta_n^W&=\{y\in\Z,~(0,0)\rightarrow (y,n)\text{ in }W\};\\
\tau^{W}&=\inf\{n\in\N,~ \eta_n^W=\emptyset\}, \text{ the extinction time of }W;\\
H_n^W&= \bigcup_{0\leq k\leq n} \eta_{k}, \text{ the set of all points touched before time }n;\\
\gamma'(\theta,x,y)&=\inf\left\{n\in\N: \forall k\ge n~\Card \{i\in\{0,\ldots,k\}, (x,0)\to(y,i)\}\ge\theta k\right\}.
\end{align*}

We recall here the properties of the percolation $(W_e^k)$ we built in Theorem~\ref{construction}. The proofs can be found in~\cite{DOP}. We will use our coupling to prove the sought exponential decays (Theorems~\ref{petitsamas} and~\ref{au-lineaire}).

\begin{corollary}\label{DOPexpo} Let $\alpha>0$. There exists $\beta>0$ such that \[\E[\1_{\{\tau^{W}<+\infty\}} \exp(\beta\tau^{W})]\leq \alpha.\]
\end{corollary}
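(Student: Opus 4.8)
The plan is to deduce the bound from two properties of the supercritical percolation $W$ recalled above from~\cite{DOP}, both of which improve as the block construction is made more supercritical. Since the finite space-time condition~\eqref{B1} holds for \emph{every} $\epsilon>0$, we may (and do) take $\epsilon$ as small as we please, and we use that for $\epsilon$ small enough: (i) the extinction probability $\P(\tau^W<\infty)$ of the percolation $W\in\mathcal{C}_d(M,1-\epsilon)$ is smaller than any prescribed quantity; and (ii) there are constants $A,B>0$ with $\P(n\le\tau^W<\infty)\le A\exp(-Bn)$ for every $n\ge1$. Given $\alpha>0$, I would first fix $\epsilon$ so small that $\P(\tau^W<\infty)<\alpha$, and then exhibit the required $\beta$.

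To exhibit $\beta$, observe that (ii) makes $h(\beta):=\E[\1_{\{\tau^W<\infty\}}\exp(\beta\tau^W)]=\sum_{n\ge1}e^{\beta n}\P(\tau^W=n)$ finite for every $\beta\in[0,B)$, since $h(\beta)\le A\sum_{n\ge1}e^{-(B-\beta)n}<\infty$; thus $h$ is a convergent power series in $e^\beta$ on $[0,B)$, in particular continuous there, and $h(0)=\P(\tau^W<\infty)<\alpha$. Continuity of $h$ at $0$ then produces a $\beta>0$ with $h(\beta)\le\alpha$, as wanted. One can also make this quantitative: with $p_n:=\P(n\le\tau^W<\infty)$ and $\tau^W\ge1$, Abel summation gives
\[ h(\beta)=e^{\beta}\,\P(\tau^W<\infty)+(1-e^{-\beta})\sum_{n\ge2}e^{\beta n}p_n\le e^{\beta}\,\P(\tau^W<\infty)+(1-e^{-\beta})\,\frac{A\,e^{-2(B-\beta)}}{1-e^{-(B-\beta)}}, \]
and the right-hand side tends to $\P(\tau^W<\infty)<\alpha$ as $\beta\downarrow0$, so it is $\le\alpha$ for all small $\beta>0$.

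The real content is in (i) and (ii), which I would simply quote from~\cite{DOP}; there they come from a Peierls-type contour argument for the class $\mathcal{C}_d(M,q)$. On $\{n\le\tau^W<\infty\}$ the open cluster of the origin is finite yet reaches level at least $n$, which forces a dual contour of size growing with $n$; the $M$-local dependence of $W$ allows one to extract from any contour of size $\ell$ a fixed positive fraction of edges that are closed and conditionally almost independent, so such a contour has probability at most $(C(1-q))^{c\ell}$, and summing over the at most $(C')^{\ell}$ contours of size $\ell$ gives the exponential bound (ii) with rate $B$ as large as desired---hence with $\P(\tau^W<\infty)$ as small as desired, which is (i)---as soon as $q=1-\epsilon$ is close enough to $1$. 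I do not expect any obstacle beyond transcribing these statements from~\cite{DOP} accurately; the rest is the elementary summation above.
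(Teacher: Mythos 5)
Your proposal is correct, and in substance it ends where the paper does: the paper offers no proof of Corollary~\ref{DOPexpo} at all, but quotes it (together with Corollaries~\ref{DOPcroissance} and~\ref{DOPretouche}) from~\cite{DOP}, where such estimates are proved for percolation fields in the class $\mathcal{C}_d(M,q)$ with $q$ close to $1$; you likewise delegate the percolation input to~\cite{DOP}. What you do differently is to split that input into the two primitive estimates (i) $\P(\tau^W<\infty)$ small and (ii) $\P(n\le\tau^W<\infty)\le A e^{-Bn}$, and then deduce the corollary by the continuity/Abel-summation computation; this computation is correct (note $\tau^W\ge1$ and integer-valued, the series converges for $\beta<B$, and your bound tends to $\P(\tau^W<\infty)$ as $\beta\downarrow 0$). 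Your reformulation also makes explicit a point the paper leaves implicit: for a fixed field $W$ the left-hand side decreases to $\P(\tau^W<\infty)>0$ as $\beta\downarrow0$, so the conclusion for an arbitrary $\alpha>0$ is only available because the parameter $\epsilon$ of the block construction (hence $q=1-\epsilon$, via condition~\eqref{B1} and Theorem~\ref{construction}) is chosen depending on $\alpha$; this is indeed how the statement is meant and used (the choice of $\epsilon$ before the corollaries and the choice of $\beta_1,\beta_2$ in Corollary~\ref{controls}). One caution on your sketch of (ii): it cannot be obtained from plain stochastic domination of $W$ over an independent oriented percolation, because $\{n\le\tau^W<\infty\}$ is not a monotone event; one must run the contour/restart argument directly on the locally dependent field, using the conditional bound $\P(W^{k+1}_e=1\mid\cdot)\ge 1-\epsilon$ and the $M$-local dependence, which is exactly what~\cite{DOP} does and what your Peierls sketch indicates, so there is no gap, but that is the step where the dependence structure genuinely enters.
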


\begin{corollary}\label{DOPcroissance}
There exist $A,B,C>0$ such that for every $L,n>0$, one has
\[\P(\tau^{W}=+\infty,[-L,L]\not\subset H^w_{CL+n})\leq Ae^{-Bn}.\]
\end{corollary}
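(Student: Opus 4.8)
The plan is to argue within the framework of locally dependent supercritical oriented percolation developed in~\cite{DOP}, of which $W$ is an instance (it belongs to $\mathcal{C}_d(M,q)$ with $q=1-\epsilon$ close to $1$ by the corollary ending Section~\ref{Sbezgrim}): once $W$ survives it spreads at a linear rate, and deep inside the associated space--time cone every site gets wetted after a waiting time with exponential tails. The first step is to control the two edges. Writing $r_k=\sup\{j:(0,0)\to(j,k)\text{ in }W\}$ and $l_k=\inf\{j:(0,0)\to(j,k)\text{ in }W\}$ (with the slice empty iff $r_k<l_k$), the analysis of~\cite{DOP} yields a speed $\alpha>0$ and constants $A,B>0$ with
\[
\P\big(\tau^W=+\infty,\ \exists k\ge m:\ r_k<\alpha k\ \text{ or }\ l_k>-\alpha k\big)\le A e^{-Bm}.
\]
Fixing $C_1>1/\alpha$ and applying this with $m=\lceil C_1 L\rceil$, outside an event of probability $\le A e^{-BC_1 L}$ the whole segment $[-L,L]$ lies strictly between $l_k$ and $r_k$ for every level $k\ge C_1 L$ (in particular $W$ is still alive at all such levels).

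Next I would handle the wetting of a single site $y\in[-L,L]$ once it is deep inside the cone. Set $m_0=\lceil C_1 L\rceil$. Conditionally on $\mathcal{G}_{m_0}$, on the survival event and inside the cone the wet configuration observed in a bounded spatial window around $y$ is comparable to the stationary (upper invariant) one, so there is a probability bounded below, uniformly in $y$, of an open path reaching $(y,k)$ for some $k$ in a window of bounded length. Retrying this at the geometrically spaced levels $m_0,m_0+T,m_0+2T,\dots$, each attempt using only open paths confined to a box disjoint from the boxes of the previous attempts — so that, conditionally on the past, these attempts dominate independent Bernoulli trials, exactly as in the renewal/restart construction of~\cite{DOP} and Corollary~\ref{DOPexpo} — one obtains $A',B'>0$ independent of $y$ and $L$ such that, after adding the exceptional set of the first step,
\[
\P\big(\tau^W=+\infty,\ y\notin H^W_{m_0+p}\big)\le A' e^{-B'p}\qquad (p\ge 0).
\]

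Finally I would take $C>C_1$, apply this bound with $p=CL+n-m_0\ge (C-C_1)L+n$, and sum over the at most $2L+1$ sites $y\in[-L,L]$:
\[
\P\big(\tau^W=+\infty,\ [-L,L]\not\subset H^W_{CL+n}\big)\le A e^{-BC_1 L}+(2L+1)\,A' e^{-B'((C-C_1)L+n)}.
\]
Since $L\mapsto L e^{-cL}$ is bounded for every $c>0$, both terms are at most $A'' e^{-B'' n}$ for suitable $A'',B''>0$, which is the announced estimate.

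The hard part will be the middle step: because we only condition on $\{\tau^W=+\infty\}$ and not on any favourable event, the process issued from the origin may a priori behave erratically, so $y$ cannot be declared wet with probability close to $1$ at any prescribed level — the exponential gain has to be extracted by stacking uniformly positive chances over many levels, which is precisely where one needs a genuine renewal structure for $W$ (exponential tails of the relevant return times, confinement of open paths to disjoint boxes). Making all constants uniform in $L$ and $y$, and choosing the time budget $CL$ large enough to absorb the polynomial loss of the union bound, is the bookkeeping that brings the statement to the clean uniform form above; the full details are those of the corresponding statement in~\cite{DOP}.
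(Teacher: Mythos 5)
The paper does not actually prove this corollary: it is stated as one of the imported properties of the macroscopic percolation (which lies in the class $\mathcal{C}_d(M,q)$ of locally dependent oriented percolations), with the explicit remark that ``the proofs can be found in~\cite{DOP}''. Your outline --- edge-speed large deviations on the survival event, a restart/renewal argument giving uniform exponential tails for the hitting time of each site deep inside the cone, and a union bound over the $O(L)$ sites absorbed by choosing $C$ large --- is the standard route behind that reference, and since you, like the paper, defer the genuinely hard renewal step to~\cite{DOP}, your treatment is correct and essentially matches the paper's.
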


\begin{corollary}
 \label{DOPretouche}
There exist positive constants $A',B',\theta,\beta$  such that for all $x,y\in\Z$, $n\ge 0$, one has \[\P(\beta |x-y|+n<\gamma'(\theta,x,y)<+\infty)\le A'e^{-B'n}.\]
\end{corollary}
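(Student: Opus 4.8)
The key observation is that $\{\gamma'(\theta,x,y)<+\infty\}$ is contained in the event that the forward cluster of $(x,0)$ in $W$ is infinite, since $\theta>0$ forces $(x,0)\to(y,i)$ for infinitely many $i$. The plan is therefore to work on this survival event and to split the threshold $\beta|x-y|+n$ into one part that pays for the cluster to \emph{reach} a fresh vertex near $y$, and one part that pays for the density of the heights $i$ with $(x,0)\to(y,i)$ to stabilise above $\theta$. This is exactly the estimate proved for the percolation of Theorem~\ref{construction} in~\cite{DOP}; I only indicate the main steps and the inputs they use.

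\emph{Reaching $y$.} Applying Corollary~\ref{DOPcroissance} to a spatial translate of $W$, one gets that, on the event that the cluster of $(x,0)$ survives, the set $H^W$ of visited vertices contains $\{y-1,y,y+1\}$ at every height $\ge C|x-y|+m$, except on an event of probability $\le Ae^{-Bm}$. Combining this with Corollary~\ref{DOPexpo}, which discards the local sub-clusters that die out before spawning a surviving descendant, produces a (random) height $\Sigma$ and a vertex $(z,\Sigma)$ with $|z-y|\le\kappa$ for a fixed $\kappa$, such that $(x,0)\to(z,\Sigma)$, the forward cluster of $(z,\Sigma)$ is infinite, and $\P(\Sigma>\beta|x-y|+m)\le Ae^{-Bm}$. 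By the conditional local-dependence property recalled in Theorem~\ref{construction}, conditionally on $\Sigma$ the percolation above level $\Sigma$ started from $(z,\Sigma)$ still belongs to the class for which Corollaries~\ref{DOPexpo} and~\ref{DOPcroissance} hold, so the problem is reduced, uniformly over starting vertices, to a fixed-starting-point statement.

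\emph{Density of connections.} It then suffices to show that there are $\theta,A,B>0$ such that, for every $w$ with $|w|\le\kappa$,
\[\P\Big(\tau^{W}=+\infty,~\exists k\ge m:~\Card\{i\le k:(0,0)\to(w,i)\}<\theta k\Big)\le Ae^{-Bm}.\]
Here one invokes the standard geometry of a supercritical oriented percolation cluster conditioned to survive: the leftmost and rightmost occupied points at height $k$ grow linearly in $k$ with exponentially small fluctuations, and inside the resulting cone the cluster of $(0,0)$ coincides with the cluster of $\Z\times\{0\}$, whose section at height $i$ contains a given vertex with probability bounded below by the strictly positive density $\rho$ of the upper invariant measure. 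Thus for all large $k$ the events $\{(0,0)\to(w,i)\}$, $i\le k$, become stationary events of density $\ge\rho$, and a large-deviation bound for the number of such $i$ — once more through the exponential estimates of Corollaries~\ref{DOPexpo} and~\ref{DOPcroissance} — gives the displayed inequality for any $\theta<\rho$. This cone-coupling step, where the exponential controls are really needed, is the only delicate point; full details are in~\cite{durrettperco} and~\cite{DOP}.

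\emph{Conclusion.} Up to enlarging $\beta$, on the event $\{\beta|x-y|+n<\gamma'(\theta,x,y)<+\infty\}$ one has either $\Sigma>\beta|x-y|+n$, or else $\Sigma\le\beta|x-y|+n$ and the surviving cluster restarted at $(z,\Sigma)$ violates the density estimate of the second step at some height $\ge n$ (applied with $w=y-z$, so that $|w|\le\kappa$); both events have probability $\le A'e^{-B'n}$ by the first and second steps, which yields the corollary.
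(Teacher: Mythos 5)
The paper itself offers no proof of this corollary: together with Corollaries~\ref{DOPexpo} and~\ref{DOPcroissance}, it is quoted verbatim from~\cite{DOP} as a property of any percolation field in the class $\mathcal{C}_d(M,q)$ of locally dependent fields to which the process $(W^k_e)$ of Theorem~\ref{construction} belongs. Your outline follows the same general strategy as the cited source (restart to find a surviving vertex near $y$, then a density estimate for the hit times of a nearby column), and you ultimately defer the hard steps to~\cite{DOP} and~\cite{durrettperco} just as the paper does; in that sense you are on the paper's route.

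However, read as a standalone argument your sketch has two genuine soft spots, and they are exactly the points the citation is meant to cover. First, in the ``density of connections'' step you invoke the cone coupling with the cluster of $\Z\times\{0\}$ and the positive density of the upper invariant measure; these facts, and the accompanying large deviation bounds, are established for \emph{independent} supercritical oriented percolation, whereas $W$ is only $1$-dependent with conditional probabilities bounded below. The stochastic domination by an independent field of parameter $g(1-\epsilon)$ (Lemma 2.4 of~\cite{DOP}) does not transfer the required inequality, because the event $\{\tau^W=+\infty\}\cap\{\exists k\ge m:\ \Card\{i\le k:(0,0)\to(w,i)\}<\theta k\}$ is not monotone in the configuration; handling this for the dependent class is the substance of~\cite{DOP}. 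Second, in the ``reaching $y$'' step, Corollary~\ref{DOPcroissance} only guarantees that the column of $y$ is \emph{touched} by height $C|x-y|+m$; after the local sub-cluster at that touch point dies (Corollary~\ref{DOPexpo}), the nearest surviving point of the cluster of $(x,0)$ may be at distance of order the current height from $y$, so a naive iteration of ``reach, die, restart'' compounds the spatial displacement and does not by itself give a tail for $\Sigma$ that is exponential in $m$ \emph{uniformly in} $|x-y|$. Both issues are fixable (and are fixed in~\cite{DOP}), but as written they are gaps rather than details.
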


Note that the CPA can survive without the background percolation process does. To be able to adapt the exponential controls from the percolation to the CPA, we have to build a \textit{restart argument} (see for instance Theorem 2.5 of~\cite{DOP} and Theorem 2.30 of~\cite{lig99} for similar constructions).

We recall that for all $t\in\R^+$, $A_t^f=\supp\xi_t^f$.

\begin{lemma}[Restart argument]
 There exist random variables $\sigma$, $Y$ such that
\begin{itemize}
\item $\sigma$ takes values in $\R^{+}$ and $Y$ takes values in $\Z^d\cup\{\dagger\}$.
 \item On the event $\{\tau^f<\infty\}$, one has $\sigma>\tau^f$ and $Y=\dagger$.
 \item On the event $\{\tau^f=\infty\}$, it holds that $Y$ lives in the slab $\Z\times[-a,a]^{d-1}$. Moreover, $Y+[-n,n]^d\subset A_\sigma^f$ and a macroscopic percolation $(W_e^{n})\circ\theta_\sigma\circ T_Y$ which almost surely survives starts from $Y+[-n,n]^d$ at time $\sigma$.
\end{itemize}
\end{lemma}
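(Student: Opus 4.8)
The plan is to run the construction of Theorem~\ref{construction} not just once from the origin, but repeatedly, each time ``restarting'' from a fresh copy of the initial block whenever the current attempt fails, and to stop at the first successful attempt. Since by \eqref{survival} the CPA starting from any nonempty configuration survives with probability at least $\rho>0$, and since (by Theorem~\ref{FST}, condition \eqref{B1}, together with Theorem~\ref{construction}) on the survival event a surviving macroscopic percolation $W$ can be grown provided the initial block $f_n=\1_{[-n,n]^d}$ is entirely alive, the scheme will terminate after a geometric number of trials.

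First I would fix the parameters $n,a,b$ and the construction of $W$ as in Theorem~\ref{construction}. Starting from $(\xi^f_t)$, on the event $\{\tau^f=\infty\}$ the number $|A^f_t|$ of living particles tends to infinity (this is exactly the martingale argument inside the proof of Lemma~\ref{etape1}); hence almost surely on survival there is a finite time at which the process contains, somewhere in space, a translate of the full block $[-n,n]^d$ all of whose sites are simultaneously alive --- indeed with enough living particles around, one of them will, within one unit of time, spawn $f_n$ with the uniformly positive probability $\alpha$ used in Theorem~\ref{C1etC2}. Define $\sigma_1$ to be the first such time and $Y_1$ the (suitably ordered first) center of such a block; by the strong Markov property at $\sigma_1$ applied to $(\xi_t)$ — which is Feller — the process $(\xi_{\sigma_1+s}^f)_{s\ge0}$ restarted from $Y_1+[-n,n]^d$ dominates $(\xi^{f_n}_s)\circ\theta_{\sigma_1}\circ T_{Y_1}$ by attractivity, and on it we build the percolation $(W^n_e)\circ\theta_{\sigma_1}\circ T_{Y_1}$. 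If this $W$ survives we set $\sigma=\sigma_1$, $Y=Y_1$ and stop; otherwise $\tau^W\circ\theta_{\sigma_1}\circ T_{Y_1}<\infty$, we wait until the block-spawning event recurs (which again happens a.s.\ on $\{\tau^f=\infty\}$ by the same recurrence of large $|A_t^f|$), obtain $(\sigma_2,Y_2)$, and iterate. On $\{\tau^f<\infty\}$ we simply set $\sigma=\tau^f+1$ (so $\sigma>\tau^f$) and $Y=\dagger$.

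The key point is that the procedure stops after finitely many rounds almost surely on $\{\tau^f=\infty\}$: conditionally on the history up to the $k$-th restart time $\sigma_k$, the freshly grown percolation $W$ survives with a probability bounded below by a constant $p_0>0$ depending only on $n,a,b,\epsilon$ (this is the content of the surviving-percolation statement in Theorem~\ref{construction} together with the comparison to independent oriented percolation in its Corollary), \emph{independently} of whether the earlier rounds succeeded. Hence the number of needed rounds is stochastically dominated by a geometric random variable, so it is a.s.\ finite, each $\sigma_k$ is a.s.\ finite on $\{\tau^f=\infty\}$, and therefore $\sigma<\infty$ and $Y\in\Z\times[-a,a]^{d-1}$ on that event; the slab confinement of $Y$ comes from the fact that the starting zone $\mathcal S^6_{0,0}$ of Theorem~\ref{construction} lies in $\Z\times[-a,a]^{d-1}$.

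The main obstacle is measurability/stopping-time bookkeeping: one must check that each $\sigma_k$ is a genuine stopping time for $(\mathcal F_t)$, that ``a translate of $[-n,n]^d$ is fully alive and one of its particles spawns $f_n$ within one unit of time'' is an $\mathcal F_t$-measurable event, and that applying the strong Markov property at $\sigma_k$ correctly decouples the percolation grown after $\sigma_k$ from the past --- here attractivity ($f\ge$ block $\Rightarrow \xi^f\ge\xi^{\text{block}}$) and additivity of the graphical construction are what let us ignore the rest of the living configuration and run the construction of Theorem~\ref{construction} verbatim. Once this is set up, the geometric bound on the number of restarts is routine, and the stated properties of $\sigma$ and $Y$ follow immediately.
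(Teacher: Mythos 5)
Your scheme on the survival event is essentially the paper's restart procedure: wait until a translate of the block $[-n,n]^d$ is fully occupied, couple the process restarted from that block with the macroscopic percolation of Theorem~\ref{construction}, retry when the percolation dies, and use the uniform lower bound on the percolation's survival probability to get a geometric number of rounds. Two points, however, are genuine defects. First, and most importantly, your treatment of the extinction event (``on $\{\tau^f<\infty\}$ we simply set $\sigma=\tau^f+1$'') satisfies the letter of the statement but destroys the content the construction is meant to deliver. The paper's $\sigma$ is \emph{not} defined by cases on survival versus extinction: it is $\sigma=\sum_{i=1}^L U_i$ with $U_i=N_i+1+30bM_i$, where $N_i$ is the waiting time for either extinction to be observed or a block to be spawned (and satisfies the sub-geometric bound $\P(N_1=m)\ge\alpha\,\P(N_1\ge m)$), $M_i$ is the extinction time of the coupled percolation (exponentially integrable by Corollary~\ref{DOPexpo}), and $L$ is geometric. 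This is exactly the structure Corollary~\ref{controls} exploits to prove $\P(\sigma>t)\le Ae^{-Bt}$, from which Theorem~\ref{petitsamas} follows via $\P(t<\tau^f<\infty)\le\P(\sigma>t)$. With your definition, bounding the tail of $\sigma$ on $\{\tau^f<\infty\}$ would require a bound on $\P(t<\tau^f<\infty)$ — precisely the statement that is later deduced from this lemma — so the downstream argument becomes circular. Relatedly, defining $\sigma_1$ as ``the first time a full block is alive'' and justifying its finiteness only through $|A^f_t|\to\infty$ (Lemma~\ref{etape1}) gives no quantitative control on that waiting time; you need the $\alpha$-trial argument at every unit time step, i.e.\ the sub-geometric $N_1$, not just almost-sure finiteness.

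Second, the slab constraint on $Y$ does not come from the fact that $\mathcal{S}^6_{0,0}$ sits in $\Z\times[-a,a]^{d-1}$; it must be imposed in the search itself, as in the paper's definition of $N_1$, where one only looks for occupied blocks $x+[-n,n]^d$ with $x\in\Z\times[-a,a]^{d-1}$. As you describe it (``somewhere in space''), your $Y_1$ could lie outside the slab, and then the percolation of Theorem~\ref{construction}, which lives in the slab around its starting block, would not have the geometry needed later (in particular in the proof of Theorem~\ref{au-lineaire} and in the bound on $\|Y\|$ in Corollary~\ref{controls}). Both defects are repairable, but the repair is exactly the paper's construction: run the procedure unconditionally, let each round end either at an observed extinction, at a surviving percolation, or at a percolation failure with the process still alive, and define $\sigma$ as the accumulated time of the rounds.
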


\begin{proof}[Construction of $Y,\sigma$:]

The first step is to control the time when we have enough living points to start a coupling with a percolation process. Then, once the coupling is started, the percolation can die out. If the process dies out too, then we have a link between the extinction times; otherwise, we restart the procedure. The quantity of interest is the sum of these two times (starting+extinction) over each iteration of the procedure.
 
To construct the starting time, we use the simple following property: there exists $\alpha>0$ such that \begin{multline*}
\P\left(\exists x\in\Z\times[-a,a]^{d-1}, A_1^f\supset x+[-n,n]^d\right)\\\geq\P\left(\exists x\in\Z\times[-a,a]^{d-1}, A_1^{\delta_0}\supset x+[-n,n]^d\right)\ge \alpha.                                                                                             \end{multline*}
Procedure (illustrated by Figure~\ref{dessinrestartproc}):\\
We first look for a time when a whole cube $[-n,n]^d$ is occupied by living particles. Let 
 \[N_1=\inf\left\{m: A^f_{m+1}=\emptyset\text{ or }\exists x\in\Z\times[-a,a]^{d-1} \text{ such that } A_{m+1}^f\supset x+[-n,n]^d\right\}.\]
 For every $m\in\N$, we have
 \begin{multline*}
  \P(N_1=m)\\
  =\P\left(\{A_{m+1}^f=\emptyset \text{ or }\exists x\in\Z\times[-a,a]^{d-1} \text{ such that } A_{m+1}^f\supset x+[-n,n]^d\}\cap\{N_1\geq m\}\right)\\
  = \P\left(A_{m+1}^f=\emptyset \text{ or }\exists x\in\Z\times[-a,a]^{d-1} \text{ such that } A_{m+1}^f\supset x+[-n,n]^d| N_1\geq m\right)\\
  \times\P\left(N_1\geq m\right)\\
  \geq \P\left(\exists x\in\Z\times[-a,a]^{d-1} \text{ such that } A_{1}^{\delta_0}\supset x+[-n,n]^d\right)\P\left(N_1\geq m\right)\\
  \geq \alpha~ \P\left(N_1\geq m\right).
 \end{multline*}

Thus $N_1$ is a sub-geometric random variable such that at time $N_1$, one has either
\[\{A_{N_1+1}^f=\emptyset\}\text{ or }\{\exists x\in\Z\times[-a,a]^{d-1}\text{ such that }A_{N_1+1}^f\supset x+[-n,n]^d\}.\] 
\begin{itemize}
 \item If $A_{N_1+1}^f=\emptyset$, then we take $M_1=0$ and $Y=\dagger$.
 \item If there exists $x\in\Z\times[-a,a]^{d-1}$ such that $A_{N_1+1}^f\supset x+[-n,n]^f$, then we denote by $X_1$ the first such $x$ (according to some fixed order). We couple the process starting at time $N_1+1$ with initial set $\xi_{N_1+1}^f$ with a percolation process starting from this cube, as previously thanks to \eqref{B1}. If the percolation survives then the CPA survives too and we stop the procedure taking $M_1=0$ and $Y=X_1$. If the percolation does not survive, then let $M_1$ be its extinction time. We look at the CPA at time $U_1=N_1+1+b30M_1$ that is the time in the microscopic grid corresponding to the extinction of the macroscopic process. At time $U_1$:
 \begin{itemize}
  \item if $A^f_{U_1}=\emptyset$, then we stop the procedure and set $Y=\dagger$,
  \item otherwise, we begin the whole procedure again with a CPA with initial set $\xi_{U_1}^f$ at time $U_1$. 
 \end{itemize}
\end{itemize}
We obtain sequences of independent random variables $(N_i)$ with the same distribution as $N_1$, independent random variables $(M_i)$ with the same distribution as $M_1$ conditioned to be finite, and a geometric random variable $L$ independent of the $N_i's$ and $M_i$'s which is the number of times the $(N,M)$ procedure is carried out. Let
\begin{align*}
 \sigma=\sum_{i=1}^L U_i\text{ with }U_i=N_i+1+b30M_i.
\end{align*}

By construction, at time $\sigma$, one has either $A_\sigma^f=\emptyset$ or $\tau^f=\infty$. It implies that
\begin{itemize}
 \item on the event $\{\tau^f<\infty\}$, $\sigma>\tau^f$ and $Y=\dagger$;
 \item on the event $\{\tau^f=\infty\}$, $Y$ lives in the slab $\Z\times[-a,a]^{d-1}$, $Y+[-n,n]^d\subset \xi_\sigma^f$ and a macroscopic percolation $(W_e^{n})\circ\theta_\sigma$ which almost surely survives starts from $Y+[-n,n]^d$ at time $\sigma$.\qedhere
\end{itemize}
\end{proof}
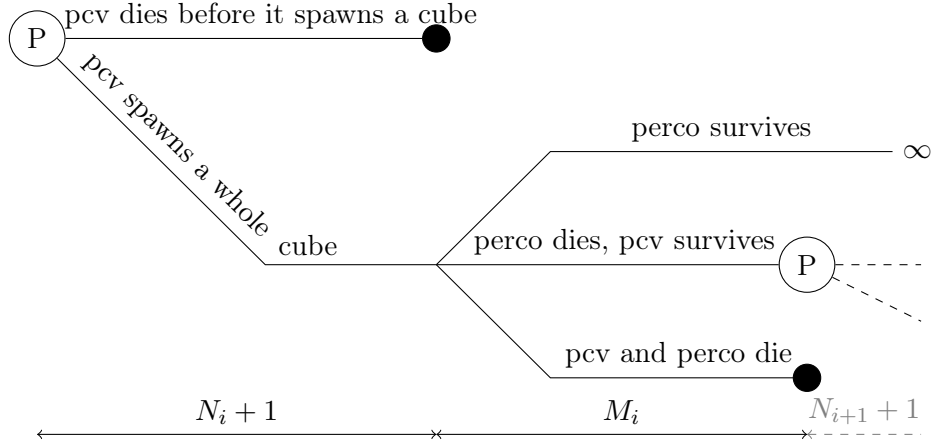
\begin{figure}
 \begin{center}
\begin{tikzpicture}[scale=0.75]
\node[draw,circle] (P) at (0,0){P};
\node[draw,circle] (R) at (13.5,-4){P};
\node[draw,circle,fill=black] (F) at (7,0){};
\node[draw,circle,fill=black] at (13.5,-6){};
\draw (P)--(F);
\draw (4.1,0) node[right,above]{pcv dies before it spawns a cube};
\draw (P)--(4,-4)node[midway,above,sloped]{pcv spawns a whole}--(7,-4)node[near start,above]{cube}--(R);
\draw (10.3,-4) node[right, above]{perco dies, pcv survives};
\draw (7,-4)--(9,-2)--(15,-2)node[right]{$\infty$} node[midway, above]{perco survives};
\draw (7,-4)--(9,-6)--(13.5,-6)node[midway,above]{pcv and perco die};
\draw[dashed](R)--++(2,0);
\draw[dashed](R)--++(2,-1);
\draw[<->](0,-7)--(7,-7) node[midway,above]{$N_i+1$};
\draw[<->](7,-7)--(13.5,-7) node[midway,above]{$M_i$};
\draw[<-,dashed,gray](13.5,-7)--(15.5,-7) node[midway,above,gray]{$N_{i+1}+1$};
\end{tikzpicture}
 \end{center}
 \caption{Restart procedure at step $i$}
 \label{dessinrestartproc}
\end{figure}

\begin{corollary}\label{controls}
Let $\sigma$ and $Y$ be the random variables constructed in the restart argument. Then there exist positive constants $A,B,A_2,B_2$ such that for every $t>0$:
\begin{align*}
 \P\left(\sigma>t\right)&\leq A\exp(-B t),\\
\P\big( \|Y\|>t\big)&\leq A_2\exp(-B_2 t),
\end{align*}
\end{corollary}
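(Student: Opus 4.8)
The plan is to derive both exponential tails from the structure of the restart procedure: $\sigma = \sum_{i=1}^L U_i$ with $U_i = N_i + 1 + b30 M_i$, where $L$ is geometric, the $N_i$ are i.i.d.\ sub-geometric, and the $M_i$ are i.i.d.\ distributed as $M_1$ conditioned to be finite. First I would record the three independence facts already established in the restart argument: $L$ is independent of the families $(N_i)$ and $(M_i)$, and within a step $N_i$ and $M_i$ come from disjoint portions of the graphical representation. The tail of $N_1$ is sub-geometric by the bound $\P(N_1 = m) \geq \alpha\,\P(N_1 \geq m)$ proved above, hence $\E[\exp(\beta_0 N_1)] < \infty$ for some $\beta_0 > 0$. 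For $M_1$ conditioned to be finite, the key input is Corollary~\ref{DOPexpo}: taking $\alpha$ there small enough (equivalently $1-\epsilon$ close to $1$, which we may assume since we already fixed $\epsilon$ large in the construction), $\E[\1_{\{\tau^W < \infty\}} \exp(\beta \tau^W)] \leq \alpha$, and since $M_1$ has the law of $\tau^W$ this gives $\E[\exp(\beta M_1) \mid M_1 < \infty] \leq \alpha / \P(M_1 < \infty)$, which is finite. Consequently there is $c > 0$ with $\E[\exp(c\,U_1)] =: \rho < \infty$ for each step.

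Next, for the tail of $\sigma$, I would compute the moment generating function by conditioning on $L$: since $L$ is geometric of some parameter $p \in (0,1)$ and independent of the $U_i$,
\[
\E[\exp(c'\sigma)] = \sum_{\ell \geq 1} \P(L = \ell)\, \E[\exp(c' U_1)]^\ell = \sum_{\ell \geq 1} p(1-p)^{\ell-1} \rho(c')^\ell,
\]
where $\rho(c') = \E[\exp(c' U_1)] \to \P(\cdot) \leq 1$... more precisely $\rho(c') \downarrow$ something $\leq 1$ as $c' \downarrow 0$ by dominated convergence (using $\rho(c) < \infty$), so for $c'$ small enough $(1-p)\rho(c') < 1$ and the geometric series converges. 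Then Markov's inequality gives $\P(\sigma > t) \leq \E[\exp(c'\sigma)] \exp(-c' t)$, which is the first claim with $A = \E[\exp(c'\sigma)]$ and $B = c'$.

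For the tail of $\|Y\|$, the point is that $Y$, when not $\dagger$, lies in the slab $\Z \times [-a,a]^{d-1}$ and is produced at time $\sigma$, so its first coordinate is controlled by how far the living set of the CPA can have spread by time $\sigma$. I would invoke the at-most-linear growth Lemma~\ref{richardson}: $Y + [-n,n]^d \subset A_\sigma^f \subset H_\sigma^f$, so on the event $\{\|Y\| > t\}$ one has $H_\sigma^f \nsubseteq B_{Mt/2}$ for $t$ large (absorbing the fixed $n$ and the slab width $a$ into the constant). Splitting on whether $\sigma \leq t/(2M)$ or not,
\[
\P(\|Y\| > t) \leq \P\big(H_{t/(2M)}^f \nsubseteq B_{t/2}\big) + \P\big(\sigma > t/(2M)\big) \leq A e^{-B t/(2M)} + A e^{-B t/(2M)},
\]
using Lemma~\ref{richardson} for the first term (with $M$ the Richardson speed) and the just-proved $\sigma$-tail for the second. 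This yields the second claim with suitably relabelled constants $A_2, B_2$.

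The main obstacle I anticipate is purely bookkeeping rather than conceptual: making sure the constant $\alpha$ in Corollary~\ref{DOPexpo} can legitimately be taken small, which requires that the percolation parameter $1-\epsilon$ was fixed close enough to $1$ — this is exactly the regime in which the construction of Section~\ref{Sbezgrim} was carried out, so it is available, but it must be cited carefully. A secondary point of care is that $U_1$ is a genuine random variable (not constant), so one cannot naively write $\E[\exp(c\sigma)] = \E[\rho^L]$ without first checking $\rho = \E[\exp(cU_1)] < \infty$; this is where the sub-geometric tail of $N_1$ and the exponential moment of $M_1 \mid \{M_1 < \infty\}$ both get used, and one must verify that $(1-p)\rho(c') < 1$ for small $c'$ by continuity of $c' \mapsto \rho(c')$ at $0$.
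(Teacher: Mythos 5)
Your proposal is correct and follows essentially the same route as the paper: an exponential moment for $\sigma=\sum_{i=1}^L U_i$ obtained by combining the sub-geometric tail of $N_1$, the exponential moment of $M_1$ conditioned to be finite (Corollary~\ref{DOPexpo}), and the independent geometric $L$, followed by Markov's inequality; and for $\|Y\|$ the same split on $\{\sigma\le ct\}$ combined with the Richardson comparison of Lemma~\ref{richardson}. Your only superfluous worry is the size of $\alpha$ in Corollary~\ref{DOPexpo}: any finite value of $\alpha$ already gives the needed exponential moment of $M_1$, so no further tuning of $1-\epsilon$ is required.
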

\begin{proof}
We start the process with some $f:\Z^d\to\N$. We use the previous construction to extend the well-known exponential control of extinction time of the percolation setting to our process. The random variables $L$ and $(N_i)$ have both exponentially decaying tail probabilities. The Corollary~\ref{DOPexpo}, say that the same is true for $M_i$. So, we take $\beta_1>0$ such that $\E[ e^ {\beta_1 L}]<\infty$, and then $\beta_2>0$ such that $\E[ e^{\beta_2 (N_1+1+30bM_1)}]\leq e^{\beta_1}$. Then 
\begin{align*}
\E\left[e^{\beta_2 \sigma}\right] &=\E\left[\E\left[e^{\beta_2\sigma}|L\right]\right]\\
&=\E\left[\E\left[e^{\beta_2\sum_{i=1}^L U_i}|L\right]\right]\\
&=\E\left[\E\left[e^{\beta_2(N_1+1+30bM_1)}\right]^L\right]\leq \E\left[e^{\beta_1 L}\right]<\infty.
\end{align*}
It follows that $\sigma$ has exponentially decaying tail probabilities, which is the first inequality.
Using the fact that the growth is at most linear and that $Y\in A_\sigma^f$, we can obtain the second inequality. For every $t\in\R^+$:
\begin{align*}
 \P\left(\|Y\|>t\right)&\leq  \P\left( \{\|Y\|>t \}\cap \{\sigma>ct\}\right)+ \P\left( \{\|Y\|>t\} \cap\{ \sigma\leq ct\}\right)\\
 &\leq \P\left( \sigma>ct\right)+\P\left( \exists~ Y\in H_{ct} \text{ with }\|Y\|>t\right)\\
 &\leq 2Ae^{-\frac{B}{M}t},
\end{align*}
taking $c=\frac{1}{M}$ where $M$ is the constant appearing in Lemma~\ref{richardson}.
\end{proof}
We deduce from Corollary~\ref{controls}:
\thmpetitsamas*
\begin{proof}
By construction, on the event $\{\tau^f<\infty\}$, $\sigma>\tau^f$. Therefore,
\[\P(t<\tau^f<\infty)\leq \P(\sigma>t)\]
and the expected result follows. 
\end{proof}
Now, we handle the second fundamental estimate.
\thmaumlineaire*
\begin{proof}The natural idea is to use the corresponding result about the percolation process and to adapt it to the CPA as we did it in the previous corollaries. But here Corollary~\ref{DOPcroissance} is not sufficient because when we hit a macroscopic point with $W$ we do not hit the whole associated zone in the microscopic grid, but only a part of it. To ensure that we hit every point in this zone we have to repeat the hit. For this reason, we will need Corollary~\ref{DOPretouche}. 

In order to use Corollary~\ref{DOPretouche}, we have to use the percolation construction again. However, we recall that our construction was made in a slab $\Z\times[-a,a]^{d-1}$. First, we explain how to deal with the case where $x$ is in the slab and then we will explain the general case. 

The restart argument gives the existence of random variables $Y\in\Z\times[-a,a]^{d-1}$, $\sigma\in \R^+$, as well as a macroscopic percolation $W$ surviving from $\overline{0}$ (which corresponds to $Y+\mathcal{S}_{0,0}^{6}$ at time $\sigma$), and living in a macroscopic grid $\mathcal{L}\circ\theta_\sigma \circ T_Y$.

Let $x\in\Z^d$ and $x_1$ be its first coordinate. Then there exists a unique pair $(j,r)\in\Z\times[0,2a[$ such that $x_1-Y_1=(2j-1)a+r$; we put $\overline{x}=j$ the macroscopic site nearest to $x$. 

Let $\overline{R}^a_0(\overline{x})=0$ and for $i\geq 1$, let
\[ \overline{R}^a_i(\overline{x})=\inf\{n> \overline{R}^a_{i-1} (\overline{x}): \text{ there exists a path from $0$ to $(\overline{x},n)$ in }W\}\]
be the $i$-th time when $W$ hits $\overline{x}$. In an analogous way, we set
\begin{align*}
 R^a_0(x)&=0 \\
 R^a_{i+1}(x)&=\inf\{t\ge R'_i:~ \exists y\in x+[-7a,7a]^d\text{ with }y\in A_t^f\}\text{ and }R'_i=R^a_i(x)+59b.
\end{align*}

$R_i^a(x)$ is the i-th time when we hit the box $ x+[-7a,7a]^d$ with the constraint that these times are  separated by at least $59b$.

By definition of $\gamma$, if we choose $k\geq \theta\gamma$, then the number of times when we hit $\overline{x}$ from $\overline{0}$ before time $\frac{k}{\theta}$ is at least $k$. So
\[
\overline{R}_k^a(\overline{x})\leq \frac{1}{\theta} \max\left( \gamma(\theta,\overline{0},\overline{x}),k\right).\]
We remark that if we hit $k$ times the point $\overline{x}$ in the macroscopic grid then we hit at least $k$ times the zone $x+[-7a,7a]^d$ in the microscopic grid, and these touches are separated from at least $59b$ (the height of two edges) so 
\[ R_k^a(x)\leq \sigma+\frac{31b}{\theta} \max\left( \gamma(\theta,\overline{0},\overline{x})\circ\theta_{\sigma},k\right).\]
Then, using Corollary~\ref{DOPretouche} on the points $\overline{0}$ and $\overline{x}$ we have
\begin{align}\label{DECgamma}\P\left(\gamma(\theta, \overline{0},\overline{x})\leq \beta |\overline{x}|+k\right)\geq 1-A'e^{-B'k}.
\end{align}
This leads us to
\begin{align*}
 \P\left(R_k^a(x)\leq k+\frac{31b}{\theta}\left(\frac{\beta}{2a}\left(\|x\|+k\right)+k\right)\right)&\geq \P\left(\{\gamma(\theta,\overline{0},\overline{x})\leq \frac{\beta}{2a} \left(\|x\|+k\right)+k\} \cap \{\sigma\leq k\}\right)\\
 &\geq \P\left(\{\gamma(\theta,\overline{0},\overline{x})\leq \beta |\overline{x}|+k\}\cap \{|\overline{0}|\leq k\} \cap \{\sigma\leq k\}\right)\\
 &\geq  \P\left(\gamma(\theta,\overline{0},\overline{x})\leq \beta |\overline{x}|+k\right)\\
 &\phantom{\geq}+\P\left(|\overline{0}|\leq k\right) +\P\left(\sigma\leq k\right)-2\\
 \label{etoile}\tag{*}
 &\geq 1-3Ae^{-Bk}
\end{align*}
applying Corollary~\ref{controls} and Equation~\eqref{DECgamma}. Taking $C_1=\frac{31b\beta}{2a\theta}$ and $C_2=\frac{31b\beta}{\theta}+2$ we obtain that
\[\P\left(R_k^a(x)\geq C_1\|x\|+C_2k\right)\leq Ae^{-Bk}.\]

Now that we have succeeded in controlling hitting times of points that are not far from $x$, we try to reach $x$ from these points.

Consider the event $B=\{\forall y\in x+[-7a,7a]^d,~ \exists t\le 58b \text{ such that } x\in A_t^{\delta_y}\}$
and also, for $n\ge 1$
\[A_n={\cap}_{i=0}^n\{R_i^a(x)<+\infty, \theta^{-{R_i^a(x)}}(B^c)\}.\]
Note that by construction, $\theta^{-R_i^a(x)}(B)$ is $\mathcal{F}_{R_{i+1}^a(x)}$-measurable, so, by the Markov property
\[\P(A_n|\mathcal{F}_{R_n^a(x)})=\1_{A_{n-1}\cap \{R_n^a(x)<+\infty\}}\P(B^c).\]
We can see that $\P(B)\geq c>0$ so for each $n\geq 1$, $\P(A_n)\leq (1-c)^n$. Finally, using \eqref{etoile} we obtain
\begin{align*}
\P\left(\tau^f=+\infty, t(x)\geq C_1\|x\|+C_2n\right)&\leq \P\left(\tau^f=+\infty, R_n^a(x)\geq C_1\|x\|+C_2n\right)\\
&\phantom{\leq}+\P\left(\tau^f=+\infty, R_n^a(x)\leq C_1\|x\|+C_2n\leq t(x)\right)\\
&\leq Ae^{-Bn}+(1-c)^{n}. 
\end{align*}
Finally, if $x$ is not in the slab, we repeat the previous construction through the path $(x_1,0,\ldots,0)\rightarrow(x_1,x_2,0,\ldots,0)\rightarrow\cdots\rightarrow(x_1,\ldots,x_d)$. The hitting time between two successive points satisfies the inequality above so the same result follows for the total hitting time.
\end{proof}

\section{Ergodicity}\label{Sergo}
The previous exponential controls are the ingredients required to show the ergodicity of the dynamical system  $(\Omega, \mathcal{F},\overline{\P},\tilde{\theta}_x)$. First we establish properties of $K(x)$ that will be very useful in Section~\ref{Sssadd}.
\begin{lemma}
 \label{Kgeom}
 \begin{itemize}
  \item For all $x\in\Z^d$ and $k\in\N$, $\P(K(x)>k)\le(1-\rho)^k$.
  \item Almost surely, for every $x\in\Z^d$, $(K(x)=k\text{ and }\tau^0=+\infty) \Longleftrightarrow(u_k(x)<+\infty\text{ and } v_k(x)=+\infty)$.
 \end{itemize}
\end{lemma}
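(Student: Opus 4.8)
The plan is to establish the two assertions separately. The first is a soft consequence of the strong Markov property at the stopping times $u_k(x)$; the second needs, in addition, the ``at least linear growth'' estimate of Theorem~\ref{au-lineaire}, which — letting the extra time tend to infinity — shows that for every nonempty $g\colon\Z^d\to\N$ with finite support one has $\P(\tau^g=+\infty,\ t^g(x)=+\infty)=0$ (here I use that $(\xi^g_t)$ is supercritical, which holds for every nonempty finite $g$ since survival does not depend on the finite initial configuration, while for the empty configuration $\tau^g=0$ makes the claim trivial).

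For the tail bound, I would first observe, directly from the recursion defining $u_k$ and $v_k$, that $\{K(x)>k-1\}=\{u_k(x)<+\infty\}$: the finiteness of $u_k(x)$ forces that of $v_{k-1}(x)$, hence of $u_{k-1}(x)$, and so on down to $u_1(x)$, which is exactly the requirement $K(x)\ge k$; in particular this event lies in $\mathcal F_{u_k(x)}$. On $\{u_k(x)<+\infty\}$ one has $v_k(x)<+\infty$ if and only if $\tau^x\circ\theta_{u_k(x)}<+\infty$, so conditioning on $\mathcal F_{u_k(x)}$ and using the strong Markov property together with the spatial translation invariance of $\P$,
\[
\P\big(v_k(x)<+\infty\mid\mathcal F_{u_k(x)}\big)=\P(\tau^x<+\infty)=1-\rho
\qquad\text{on }\{u_k(x)<+\infty\}.
\]
Since $\{K(x)>k\}\subset\{u_k(x)<+\infty\}\cap\{v_k(x)<+\infty\}$, taking expectations yields $\P(K(x)>k)\le(1-\rho)\P(K(x)>k-1)$, and $\P(K(x)>k)\le(1-\rho)^k$ follows by induction, the base case being $\P(K(x)>0)\le1$.

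For the equivalence, the implication ``$\Leftarrow$'' is elementary: if $u_k(x)<+\infty$ then $K(x)\ge k$, while $v_k(x)=+\infty$ gives $K(x)\le k$ by definition of $K$, so $K(x)=k$; moreover $v_k(x)=+\infty$ with $u_k(x)<+\infty$ means $\tau^x\circ\theta_{u_k(x)}=+\infty$, and since $x\in A^0_{u_k(x)}$, attractivity yields $\xi^0_{u_k(x)+t}\ge\xi^x_t\circ\theta_{u_k(x)}\not\equiv0$ for all $t$, whence $\tau^0=+\infty$. For ``$\Rightarrow$'', fix $k$ and work on $\{\tau^0=+\infty\}$: by the tail bound $K(x)<+\infty$ a.s., and $K(x)\ge1$ a.s. because $K(x)=0$ means $u_1(x)=t(x)=+\infty$, which has probability zero on $\{\tau^0=+\infty\}$ by Theorem~\ref{au-lineaire}. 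It remains to exclude the case $K(x)=k$, $v_k(x)<+\infty$, $u_{k+1}(x)=+\infty$. On $\{v_k(x)<+\infty\}$ the semigroup property identifies $\{\tau^0=+\infty\}$ with $\{\tau^g\circ\theta_{v_k(x)}=+\infty\}$ and $\{u_{k+1}(x)=+\infty\}$ with $\{t^g(x)\circ\theta_{v_k(x)}=+\infty\}$, where $g=\xi^0_{v_k(x)}$ is $\mathcal F_{v_k(x)}$-measurable; applying the strong Markov property at the stopping time $v_k(x)$ and then the vanishing of $\P(\tau^g=+\infty,\ t^g(x)=+\infty)$ shows this probability is zero. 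Hence on $\{\tau^0=+\infty,\ K(x)=k\}$ one has $v_k(x)=+\infty$, and $u_k(x)<+\infty$ since $K(x)=k\ge1$. Finally, $\Z^d$ being countable, the statement holds almost surely simultaneously for all $x$.

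The main obstacle is the ``$\Rightarrow$'' direction, and more precisely converting the annealed exponential bound of Theorem~\ref{au-lineaire} into the almost-sure fact that, on survival, the site $x$ is revisited after any given random (stopping) time — which is exactly what the strong Markov property at $v_k(x)$, applied in kernel form to the hitting/extinction probability, provides. Everything else is bookkeeping with the definitions of $u_k$, $v_k$ and $K$.
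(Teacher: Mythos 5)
Your proof is correct and follows essentially the same route as the paper: the geometric tail comes from the identity $\{K(x)>k\}=\{u_{k+1}(x)<\infty\}$ together with the strong Markov property at $u_{k+1}(x)$ and translation invariance ($\P(\tau^x<\infty)=1-\rho$), and the equivalence comes from the strong Markov property applied in kernel form at $v_k(x)$ combined with Theorem~\ref{au-lineaire}, which forces $\P(\tau^{g}=\infty,\,t^{g}(x)=\infty)=0$ for every nonempty finite initial configuration $g$. Your extra bookkeeping (the explicit ``$\Leftarrow$'' direction via attractivity and the $k=0$ case) only fills in details the paper leaves implicit.
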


\begin{proof}
We recall that $\rho$ is the constant given in \eqref{survival}. Let $x\in\Z^d$ and $k\in \N$. Applying the strong Markov property at time $u_{k+1}(x)$, we have:
\begin{align*}
\P(K(x)>k+1)& = \P(u_{k+2}(x)<+\infty) \\
& \le \P(u_{k+1}(x)<+\infty,v_{k+1}(x)<+\infty) \\
& \le \P(u_{k+1}(x)<+\infty,\tau^x \circ \theta_{u_{k+1}(x)}<+\infty) \\
 & \le \P(u_{k+1}(x)<+\infty)\P(\tau^x <+\infty)\\
& \le \P(u_{k+1}(x)<+\infty) (1-\rho)=\P(K(x)>k)(1-\rho),
\end{align*}
which proves the first point.
Suppose that $\{\tau^0=+\infty\}$. Let $x\in\Z^d$ and $k\in \N$. Applying the strong Markov property at time $v_k(x)$, we have
\begin{align*}
\P&(\tau^0=+\infty, v_k(x)<+\infty, u_{k+1}(x)=+\infty| \mathcal F_{v_k(x)})\\
&=  \1_{\{v_k(x)<+\infty\}}\P(\tau^{\centerdot}=+\infty, t^{\centerdot}(x)=+\infty) \circ \xi^0_{v_k(x)}.
\end{align*}
The last term is equal to zero due to the growth being at least linear (Theorem~\ref{au-lineaire}). We deduce the second point.
\end{proof}

It means that, on the event $\{\tau^{0}=\infty\}$, $K(x)$ is almost surely finite, $\sigma(x)$ is well defined and the procedure stops when we find a moment where $x$ has an infinite descent. Thanks to the previous equivalence and following the proofs of Lemmas 8 and 9 in~\cite{GMPCRE} we establish the \textit{law after the restart} and show the expected properties of invariance and independence. This is the content of the following result. 

\begin{corollary}
 \label{invariancePbarre}
Let $x$ and $y$ in $\Z^d$ and assume that $x \neq 0$.
\begin{itemize}
\item For $A$ in the $\sigma$-algebra  generated by $\sigma(x)$, and $B\in \mathcal F$ we have: $\Pbarre(A \cap (\tilde{\theta}_x)^{-1}(B))=\Pbarre(A)\Pbarre(B)$.
\item The probability measure $\Pbarre$ is invariant under the action of $\tilde \theta_x$.
\item Under $\Pbarre$, $\sigma(y)\circ\tilde{\theta}_x$ is independent from $\sigma(x)$ and it has the same law as $\sigma(y)$.
\item The random variables $(\sigma(x) \circ(\tilde\theta_{x})^j)_{j\ge0}$ are independent and identically distributed under $\Pbarre$.
\end{itemize}
\end{corollary}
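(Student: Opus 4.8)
The plan is to follow the scheme of Lemmas~8 and~9 of~\cite{GMPCRE}: prove the first bullet (the genuine ``law after the restart'') by a strong Markov argument at the stopping times $u_k(x)$, and then read off the other three bullets from it.

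\textbf{Reduction and decomposition over $K(x)$.} Fix $A$ in the $\sigma$-algebra generated by $\sigma(x)$ and $B\in\mathcal F$. Since $\Pbarre=\P(\,\cdot\mid\tau^0=+\infty)$ and $\rho=\P(\tau^0=+\infty)$, the first bullet is equivalent to
\[\P\big(A\cap\tilde\theta_x^{-1}(B)\cap\{\tau^0=+\infty\}\big)=\frac1\rho\,\P\big(A\cap\{\tau^0=+\infty\}\big)\,\P\big(B\cap\{\tau^0=+\infty\}\big).\]
On $\{\tau^0=+\infty\}$ the integer $K(x)$ is a.s.\ finite (Lemma~\ref{K}), $\sigma(x)=u_{K(x)}(x)<+\infty$ and $\tilde\theta_x=T_x\circ\theta_{\sigma(x)}$, so I would split the left-hand side according to the value $K(x)=k$. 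The key identity here, coming from the second point of Lemma~\ref{Kgeom} combined with attractivity (if the age-$1$ restart of the CPA from $x$ at time $u_k(x)$ survives then $\xi^0$ survives too, because $\xi^0_{u_k(x)}\ge\delta_x$), is
\[\{\tau^0=+\infty,\ K(x)=k\}=\{u_k(x)<+\infty\}\cap\theta_{u_k(x)}^{-1}\{\tau^{\delta_x}=+\infty\}.\]
This presents the event, cut at the \emph{stopping time} $u_k(x)$, as an $\mathcal F_{u_k(x)}$-measurable part intersected with a ``fresh'' event of the time-shifted environment.

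\textbf{Strong Markov step.} On $\{\tau^0=+\infty,K(x)=k\}$ one has $\sigma(x)=u_k(x)$, so $A$ coincides there with an event $A'_k\in\mathcal F_{u_k(x)}$ (the defining condition of $A$ with $\sigma(x)$ replaced by $u_k(x)$), and $\tilde\theta_x^{-1}(B)=\theta_{u_k(x)}^{-1}(T_x^{-1}B)$. Using the translation covariance of the graphical construction, $\{\tau^{\delta_x}=+\infty\}=T_x^{-1}\{\tau^0=+\infty\}$, the $k$-th slice becomes
\[A'_k\cap\{u_k(x)<+\infty\}\cap\theta_{u_k(x)}^{-1}\big(T_x^{-1}(B\cap\{\tau^0=+\infty\})\big).\]
Applying the strong Markov property at $u_k(x)$ (the Poisson configuration after $u_k(x)$, re-centered, is independent of $\mathcal F_{u_k(x)}$ with the same law) and then the $T_x$-invariance of $\P$, this probability equals $\P(B\cap\{\tau^0=+\infty\})\cdot\P(A'_k\cap\{u_k(x)<+\infty\})$. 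Summing over $k$,
\[\P\big(A\cap\tilde\theta_x^{-1}(B)\cap\{\tau^0=+\infty\}\big)=\P(B\cap\{\tau^0=+\infty\})\sum_{k\ge0}\P(A'_k\cap\{u_k(x)<+\infty\}).\]
Taking $B=\Omega$ (so $\tilde\theta_x^{-1}(B)=\Omega$) identifies $\sum_k\P(A'_k\cap\{u_k(x)<+\infty\})$ with $\P(A\cap\{\tau^0=+\infty\})/\rho$, and substituting back gives the displayed equivalent of the first bullet.

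\textbf{The remaining bullets.} Taking $A=\Omega$ gives $\Pbarre(\tilde\theta_x^{-1}B)=\Pbarre(B)$, i.e.\ $\tilde\theta_x$-invariance of $\Pbarre$. The first bullet also says that the $\sigma$-algebras $\sigma(\sigma(x))$ and $\tilde\theta_x^{-1}\mathcal F$ are $\Pbarre$-independent; applying this with $A=\{\sigma(x)\in I\}$, $B=\{\sigma(y)\in J\}$ and using invariance to see that $\sigma(y)\circ\tilde\theta_x$ has the law of $\sigma(y)$ yields the third bullet. The fourth bullet then follows by induction on $n$: writing $\prod_{j=0}^n\phi_j(\sigma(x)\circ\tilde\theta_x^j)=\phi_0(\sigma(x))\cdot\big(\prod_{j=0}^{n-1}\phi_{j+1}(\sigma(x)\circ\tilde\theta_x^j)\big)\circ\tilde\theta_x$, the first factor is $\sigma(\sigma(x))$-measurable and the second is $\tilde\theta_x^{-1}\mathcal F$-measurable, hence independent; combining with $\tilde\theta_x$-invariance and the induction hypothesis gives $\Ebarre\big[\prod_{j=0}^n\phi_j(\sigma(x)\circ\tilde\theta_x^j)\big]=\prod_{j=0}^n\Ebarre[\phi_j(\sigma(x))]$.

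\textbf{Main difficulty.} The one genuinely delicate point is the identity in the decomposition step: rewriting $\{\tau^0=+\infty,K(x)=k\}$ as ``$\mathcal F_{u_k(x)}$-measurable event $\cap$ fresh event of the shifted environment'', which is precisely what makes the strong Markov property at $u_k(x)$ legitimate. This is where the at-least-linear growth (Theorem~\ref{au-lineaire}, via Lemma~\ref{Kgeom}) and the additivity/attractivity of the CPA enter, and it is the part that must be executed in the spirit of~\cite{GMPCRE}; substituting $u_k(x)$ for $\sigma(x)$ inside $A$ also needs some care but is routine.
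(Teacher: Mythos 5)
Your argument is correct and is essentially the paper's own route: the paper simply invokes the equivalence of Lemma~\ref{Kgeom} and refers to Lemmas~8 and~9 of~\cite{GMPCRE}, whose proofs are exactly your decomposition over $\{K(x)=k\}$, the rewriting of $\{\tau^0=+\infty,K(x)=k\}$ as an $\mathcal F_{u_k(x)}$-measurable event intersected with a shifted survival event, and the strong Markov property at the stopping time $u_k(x)$, followed by the same derivation of invariance, independence and the i.i.d.\ property. Nothing essential is missing; you have just written out in detail what the paper delegates to the cited reference.
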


The expected Theorem~\ref{ergo} is contained in the previous result.
\begin{remark}
In Lemma 10 of~\cite{GMPCRE}, the authors obtain the ergodicity of the system. We can do exactly the same thing in our setting but we only need the properties of invariance and independence for our asymptotic shape theorem.
\end{remark}

\section{Almost subadditivity and difference between $\sigma$ and $t$}\label{Sssadd}
Here, we want to bound the quantities $\sigma(x+y)-[\sigma(x)+\sigma(y)\circ\tilde{\theta}_x]$ and $\sigma(x)-t(x)$. For this, we need to go back to the definition of the essential hitting time $\sigma$ and to deal with two types of sum of random
variables: sum of $v_i-u_i$ on one hand, which are quite simple to control thanks to Theorem~\ref{petitsamas}, and sum of $u_{i+1}-v_i$ on the other hand. The quantity $u_{i+1}-v_i$ depends on the whole configuration of the process at time $v_i$ and thus is more difficult to handle. For this control, we will need all the results about the growth that we have previously established.  

\subsection{Easy case: $v_i-u_i$}

\begin{lemma}\label{vimoinsui}
There exist $A,B>0$ such that for all $x\in\Z^d$ and $t>0$ one has
\[\overline{\P} \left(\exists i<K(x)\text{ such that }v_i(x)-u_i(x)>t\right)\leq A\exp(-Bt).\]
\end{lemma}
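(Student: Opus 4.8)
The plan is to decompose the event according to the value of $K(x)$ and then use a union bound combined with the strong Markov property at the stopping times $u_i(x)$. Recall that $v_i(x)-u_i(x) = \tau^x\circ\theta_{u_i(x)}$, and that on the event $\{i<K(x)\}$ this quantity is finite by definition of $K(x)$. So we are really controlling the event that some $i<K(x)$ has a \emph{finite but large} value of $\tau^x\circ\theta_{u_i(x)}$. The point is that Theorem~\ref{petitsamas} (applied with $f=\delta_x$, which is supercritical since the process is) gives constants $A_0,B_0>0$ with $\P(t<\tau^x<\infty)\le A_0\exp(-B_0 t)$, and this is exactly the tail we need for each individual increment. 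The subtlety is to insert the conditioning ``$i<K(x)$'' correctly, and to deal with the fact that we are working under $\overline{\P}$ rather than $\P$.

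First I would write, for fixed $x$ and $t>0$,
\[
\overline{\P}\big(\exists i<K(x): v_i(x)-u_i(x)>t\big)
\le \frac{1}{\rho}\,\P\big(\exists i<K(x): v_i(x)-u_i(x)>t\big),
\]
using that $\overline{\P}(\cdot)\le \P(\cdot)/\rho$ (here $\rho=\rho(\Lambda,\gamma)>0$ by \eqref{survival}; note $\overline{\P}$ is conditioning on the survival of $\xi_t=\xi_t^{\delta_0}$, an event of probability $\rho$). Then I would bound
\[
\P\big(\exists i<K(x): v_i(x)-u_i(x)>t\big)
\le \sum_{i\ge 0}\P\big(i<K(x),\ u_i(x)<\infty,\ t<\tau^x\circ\theta_{u_i(x)}<\infty\big).
\]
On $\{i<K(x)\}$ one has $u_i(x)<\infty$ and $v_i(x)<\infty$, i.e.\ $\tau^x\circ\theta_{u_i(x)}<\infty$; moreover $\{i<K(x)\}=\{u_i(x)<\infty,\ v_i(x)<\infty\}\cap\{i-1<K(x)\}\subset \{u_i(x)<\infty\}$, and crucially $\{i<K(x)\}$ depends only on the configuration up to time $u_i(x)$ together with possibly $\{v_i(x)<\infty\}$ — the cleanest route is to note $\{i\le K(x)\}=\{u_i(x)<\infty\}\in\mathcal{F}_{u_i(x)}$ and write the $i$-th summand as $\P(u_i(x)<\infty,\ t<\tau^x\circ\theta_{u_i(x)})$, then apply the strong Markov property at the stopping time $u_i(x)$:
\[
\P\big(u_i(x)<\infty,\ t<\tau^x\circ\theta_{u_i(x)}<\infty\big)
=\E\big[\1_{\{u_i(x)<\infty\}}\,\P(t<\tau^x<\infty)\big]
\le A_0 e^{-B_0 t}\,\P(u_i(x)<\infty).
\]
By Lemma~\ref{Kgeom} (or Lemma~\ref{K}), $\P(u_i(x)<\infty)=\P(i\le K(x))\le (1-\rho)^{i-1}$ for $i\ge 1$ (and is $1$ for $i=0$), so $\sum_{i\ge 0}\P(u_i(x)<\infty)\le 1+\rho^{-1}<\infty$. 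Summing over $i$ yields
\[
\P\big(\exists i<K(x): v_i(x)-u_i(x)>t\big)\le A_0\big(1+\tfrac1\rho\big)e^{-B_0 t},
\]
and dividing by $\rho$ gives the claim with $A=A_0(1+\rho)/\rho^2$ and $B=B_0$, uniformly in $x$ (the constants from Theorem~\ref{petitsamas} and from Lemma~\ref{Kgeom} do not depend on $x$).

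The main obstacle I anticipate is purely bookkeeping: making sure the measurability of $\{i<K(x)\}$ with respect to $\mathcal{F}_{u_i(x)}$ is handled honestly, since a priori $K(x)$ also involves the event $\{v_i(x)<\infty\}$ which is \emph{not} $\mathcal{F}_{u_i(x)}$-measurable. The fix is exactly the observation above — replace $\{i<K(x)\}$ by the larger $\{u_i(x)<\infty\}=\{i\le K(x)\}$ before applying the strong Markov property, absorbing the finiteness of $v_i(x)$ into the factor $\P(t<\tau^x<\infty)$ which already includes the constraint $\tau^x<\infty$. Everything else is a routine union bound plus the two exponential inputs (Theorem~\ref{petitsamas} and the geometric tail of $K(x)$), and the passage from $\P$ to $\overline{\P}$ costs only the harmless constant $1/\rho$.
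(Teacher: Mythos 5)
Your proposal is correct and follows essentially the same route as the paper: bound $\overline{\P}$ by $\P/\rho$, union bound over $i$, enlarge $\{i<K(x)\}$ to $\{u_i(x)<\infty\}\in\mathcal{F}_{u_i(x)}$, apply the strong Markov property at $u_i(x)$ together with Theorem~\ref{petitsamas}, and sum the geometric tail of $\P(u_i(x)<\infty)$ from Lemma~\ref{Kgeom}. The only differences are cosmetic (starting the sum at $i=0$ rather than $i=1$, slightly different constants).
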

\begin{proof}
Using \eqref{survival} and Markov property we have
\begin{align*}
\Pbarre(\exists i<K(x): v_i(x)-u_i(x) >t) & \le \frac{1}{\rho}\P\left(\bigcup_{i=1}^{+\infty}\{v_i(x)-u_i(x)>t\}\cap\{i<K(x)\}\right)\\ 
&\le \frac{1}{\rho}\sum_{i=1}^{+\infty}\P\left(\{t<v_i(x)-u_i(x)<+\infty\} \cap \{u_i(x)<\infty\}\right)\\
&\le \frac{1}{\rho}\sum_{i=1}^{+\infty}\P\left(\theta_{u_i(x)}^{-1}\left(\{t<\tau^x<\infty\}\right) \cap \{u_i(x)<\infty\}\right) \\
&\le \frac{1}{\rho}\P\left(t<\tau^x<\infty\right)\sum_{i=1}^{+\infty}\P\left( i-1<K(x)\right) \\
&\le \frac{1}{\rho^2}\P\left(t<\tau^x<\infty\right).
\end{align*}
We conclude using Theorem~\ref{petitsamas}.
\end{proof}

\subsection{Box of bad growth points}

To deal with the regeneration times $u_{i+1}(x)-v_i(x)$, the idea is to find a point close to $(x,u_i(x))$, descendant of $(0,0)$ and with infinite life time, which will quickly regenerate $x$. To this end, we look, around $x$, if there are bad growth points. For every $x\in\Z^d$, $L> 0$ and $t>0$, we introduce the event
\begin{align*}
E^y(x,t)  = &\{\omega_y[0, t/2]=0\} \cup\{H^y_{t}\not\subset y+B_{Mt}\} \cup \{ t/2<\tau^y<+\infty\} \\
\cup &\{\tau^y=+\infty, \inf\{s\ge2t: x\in A^y_{s}\}> \kappa t\}.
\end{align*}
traducing the fact that a point $y$ has bad growth with respect to the spatiotemporal point $(x,t)$ and we count the number of this points by
\[N_L(x,t)=\sum_{y\in x+B_{M t+2}} \int_0^L \1_{E^y(x,t)}\circ\theta_s \,d \left( \omega^1_y+\sum
_{e\in\E^d
: y\in e}\omega_e+\delta_0 \right)(s). \]

We recall that for every $y\in\Z^d$ and $e\in\E^d$, $\omega_y^1$ and $\omega_e$ are the Poisson processes giving respectively the possible death times at $y$ and the possible times of births through $e$. Applying Lemmas 13 and 15 of~\cite{GMPCRE} to the process $(A_t)$ we obtain the following result.

\begin{lemma}
\label{boite}
If $N_L(x,t)\circ\theta_{s}=0$ and $s+ t\le u_i(x)\le s+L$, then either $v_i(x)=+\infty$ or $u_{i+1}(x)-u_i(x)\le\kappa t$.\\
Besides, for any $t,s>0$ and $x \in\Z^d$, the following inclusion holds:
\begin{align*}
&\{\tau^0=+\infty\}\cap\{\exists u\in x+B_{Mt+2}, \tau_u\circ\theta_s=+\infty,u\in A^0_s\} \nonumber\\
&\quad{}\cap\left\{N_{K(x)\kappa t}(x,t)\circ\theta_{s}=0\right\} \\
&\quad{} \cap\bigcap_{1\le i<K(x)} \{v_i(x)-u_i(x)< t\}\\
&\qquad\subset\{\tau^0=+\infty\} \cap\{\sigma(x)\le s+K(x)\kappa t \}.
\end{align*}
\end{lemma}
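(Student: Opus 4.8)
The two assertions in Lemma~\ref{boite} are purely deterministic --- each holds for every configuration $\omega$ meeting its hypotheses --- and they are the transcriptions to the support process $(A_t)$ of Lemmas~13 and~15 of~\cite{GMPCRE}; so the plan is to reproduce the mechanism of those proofs, the only structural inputs being the additivity and attractivity of the graphical construction and the bare definitions of $u_i(x)$, $v_i(x)$, $E^y(x,t)$ and $N_L(x,t)$. The observation to record first is what $N_L(x,t)\circ\theta_s=0$ buys us: in the space-time slab $(x+B_{Mt+2})\times[s,s+L]$, every vertex $y$ is \emph{good} at each instant $r$ carried by $\omega^1_y+\sum_{e\ni y}\omega_e+\delta_0$, and unwinding the four clauses of $E^y(x,t)$ this means (i) the cluster issued from $(y,r)$ stays inside $y+B_{Mt}$ for a lapse $t$ (clause coming from the at-most-linear growth, cf.\ Lemma~\ref{richardson}); (ii) the dichotomy ``$\tau^y\circ\theta_r\le t/2$ or $\tau^y\circ\theta_r=+\infty$'' holds; (iii) whenever $\tau^y\circ\theta_r=+\infty$, the site $x$ is reached again from $(y,r)$ at some instant in $[r+2t,\,r+\kappa t]$.

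For the first assertion I would fix $i$, assume $v_i(x)<+\infty$, and work on $\{N_L(x,t)\circ\theta_s=0\}\cap\{s+t\le u_i(x)\le s+L\}$. Since $u_i(x)$ is a birth instant through an edge incident to $x$, it is one of the instants weighting the measure attached to $y=x$, and $s\le u_i(x)\le s+L$ forces $\theta_{u_i(x)}\omega\notin E^x(x,t)$; clause~(ii) then gives $v_i(x)-u_i(x)=\tau^x\circ\theta_{u_i(x)}\le t/2$. Next I would follow backwards an open path realising $x\in A^0_{u_i(x)}$ and, using clause~(i) to keep the last lapse $t$ of that path inside $x+B_{Mt+2}$, extract a vertex $w\in x+B_{Mt+2}$ occupied by the path at some instant $r_w\in[u_i(x)-t,u_i(x)]$ carried by one of the Poisson measures relevant to $w$. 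Goodness of $w$ at $r_w$ and clause~(ii) force $\tau^w\circ\theta_{r_w}=+\infty$ (it cannot have died within $t/2$, being still a source of $x$ at time $u_i(x)$), so clause~(iii) produces a fresh infection of $x$ at an instant in $[r_w+2t,\,r_w+\kappa t]\subset[u_i(x)+t,\,u_i(x)+\kappa t]$; this instant exceeds $v_i(x)$, whence $u_{i+1}(x)-u_i(x)\le\kappa t$, as wanted.

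For the second assertion, note that on the left-hand event $\tau^0=+\infty$, so by Lemma~\ref{Kgeom} one has $u_i(x),v_i(x)<+\infty$ for $i<K(x)$ while $v_{K(x)}(x)=+\infty$. The supplied vertex $u\in x+B_{Mt+2}$ with $u\in A^0_s$ and $\tau^u\circ\theta_s=+\infty$ is good (since $N_{K(x)\kappa t}(x,t)\circ\theta_s=0$), so clause~(iii) yields a re-infection of $x$ within time $\kappa t$ after $s$; then, using the bound $v_i(x)-u_i(x)<t$ in force for all $i<K(x)$ and applying the first assertion successively at $i=1,\dots,K(x)-1$ --- at each step the relevant instant $u_i(x)$ lands in $[s+t,\,s+K(x)\kappa t]$ thanks to the running estimate --- I would obtain $u_{i+1}(x)-u_i(x)\le\kappa t$ for each such $i$ and, summing, $\sigma(x)=u_{K(x)}(x)\le s+K(x)\kappa t$, which is exactly the claimed inclusion.

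The step I expect to be the real obstacle is the extraction, in the first assertion, of the good \emph{surviving} ancestor $w$ sitting simultaneously within $x+B_{Mt+2}$ and within time $t$ of $u_i(x)$: one must carefully combine the at-most-linear growth (to keep the relevant portion of the open path inside the ball, via a connectedness argument) with the fact that $E^y$ is tested only at Poisson instants, which is the technical heart of Lemma~13 of~\cite{GMPCRE} and where I would follow that reference closely; the remainder is bookkeeping on the indices $i$ and on the time windows.
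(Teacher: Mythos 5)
Your overall plan is the right one: the paper proves this lemma simply by applying Lemmas 13 and 15 of~\cite{GMPCRE} to the support process $(A_t)$, and your reconstruction of the first assertion follows that mechanism. However, your argument for the second assertion has a genuine gap. You apply the first assertion ``successively at $i=1,\dots,K(x)-1$'', claiming that at each step $u_i(x)$ lands in $[s+t,\,s+K(x)\kappa t]$. For the first indices this is simply not available: nothing in the hypotheses prevents $x$ from having been visited by the process long before time $s$, so $u_1(x)$ (and possibly many subsequent $u_i(x)$) can be smaller than $s+t$; the first assertion cannot be applied there and the corresponding increments $u_{i+1}(x)-u_i(x)$ are not controlled at all. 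The missing idea is to locate where in the sequence of visits the regeneration produced by $u$ falls: since the visit times of $x$ in $A^0$ are contained in $\bigcup_{i\ge 1}[u_i(x),v_i(x)]$, the time $r\in[s+2t,\,s+\kappa t]$ at which the progeny of $(u,s)$ re-infects $x$ satisfies $u_{i_0}(x)\le r\le v_{i_0}(x)$ for some $i_0\ge 1$; if $i_0=K(x)$ one concludes directly, and otherwise the hypothesis $v_{i_0}(x)-u_{i_0}(x)<t$ gives $u_{i_0}(x)> r-t\ge s+t$ while $u_{i_0}(x)\le s+\kappa t$, and only then can one iterate the first assertion from $i_0$ up to $K(x)$ (the count still yields $s+K(x)\kappa t$ because $i_0\ge1$). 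Without this re-indexing step the claimed inclusion is not proved.

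There is also a smaller but real gap in your first part: the opening claim that $u_i(x)$ is ``a birth instant through an edge incident to $x$'' is false in the case where $x$ has been re-infected from outside the relaunched process and is still alive when that process dies; then $u_i(x)=v_{i-1}(x)$, which is a death instant at the last surviving site of that process --- a site possibly outside $x+B_{Mt+2}$ --- so $u_i(x)$ need not be an atom of any measure entering $N_L(x,t)$, and the dichotomy clause cannot be invoked at $u_i(x)$ itself. One must instead exploit the first clause of $E^y(x,t)$ (absence of death marks during a lapse $t/2$ is a bad event), which forces the birth starting the current life of $x$ to be a test instant lying within $t/2$ of $u_i(x)$, and combine it with the dichotomy applied to the first birth of the relaunched process onto a neighbour of $x$ to recover $v_i(x)\le u_i(x)+t$; this bound on $v_i(x)-u_i(x)$ is what guarantees that the re-infection produced by the surviving ancestor $(w,r_w)$, occurring after $r_w+2t\ge u_i(x)+t$, indeed falls after $v_i(x)$. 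Relatedly, your window for $r_w$ should be $[u_i(x)-t,\,u_i(x)-t/2]$ rather than all of $[u_i(x)-t,u_i(x)]$, since for $r_w$ closer than $t/2$ to $u_i(x)$ the dichotomy does not exclude that the progeny of $(w,r_w)$ dies shortly after $u_i(x)$. These are exactly the points handled by the bookkeeping of Lemma 13 of~\cite{GMPCRE}, so your deferral is legitimate, but the delicate steps are not confined to the extraction of $w$ that you single out.
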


We now control the probability that a space-time box contains no bad growth point.
If we choose 
\begin{align}
 \label{kappa}
 \kappa=3M(1+C),
\end{align}
where $M$ and $C$ are the constants respectively given in Lemma~\ref{richardson} and Theorem~\ref{au-lineaire}, the next control follows.
\begin{lemma}
\label{controleboite}
There exist $A,B>0$ such that for all $L>0$, $x\in\Z^d$ and $t>0$ one has
\[\P\left(N_L(x,t)\ge1\right)\le A\exp(-Bt).\]
\end{lemma}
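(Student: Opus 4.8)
The plan is to bound $\P(N_L(x,t)\ge1)$ by $\E[N_L(x,t)]$ via Markov's inequality, and then to reduce the whole estimate to a uniform exponential bound on the probability of the event $E^y(x,t)$ as $y$ ranges over the (polynomially many) vertices of the box $x+B_{Mt+2}$, using the Campbell/Mecke formula for the Poisson marks carried by $\omega^1_y$ and the edges at $y$ together with the stationarity of $\P$ under the time-shift $\theta_s$.

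First I would discard the reference times $s$ that are death marks of $\omega^1_y$: at such an $s$ the process issued afresh from $(y,s)$ dies at once, so $H^y_t$ reduces to $\{y\}\subset y+B_{Mt}$, $\tau^y$ becomes $0$, and the reinfection time is infinite, which makes the second, third and fourth pieces of $E^y(x,t)$ fail; the first piece $\{\omega_y[0,t/2]=0\}$ is then handled by the same observation or, if need be, directly through its elementary value $e^{-ct/2}$. For the remaining marks, after Campbell's formula inserts an extra mark at the reference time (which only favours the complementary behaviour and does not spoil the bound), everything comes down to estimating $\P(E^y(x,t))$ uniformly in $y$ with $\|x-y\|\le Mt+2$.

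Now I would treat the three surviving pieces with the exponential controls already available. The piece $\{H^y_t\not\subset y+B_{Mt}\}$ is exactly bounded by the at-most-linear growth of the process, i.e.\ the Richardson comparison of Lemma~\ref{richardson}. The piece $\{t/2<\tau^y<\infty\}$ is Theorem~\ref{petitsamas} applied to the configuration $\delta_y$. The piece $\{\tau^y=\infty,\ \inf\{u\ge2t:x\in A^y_u\}>\kappa t\}$ is the delicate one: on $\{\tau^y=\infty\}$ the restart argument of Section~\ref{Scontrols} produces, after a time with exponential tail (Corollary~\ref{controls}), a surviving percolation issued near $y$, and then Theorem~\ref{au-lineaire} forces $x$ to be hit quickly and re-hit within a bounded further window; since $y\in x+B_{Mt+2}$ gives $\|x-y\|\le Mt+2$, and since $\kappa=3M(1+C)$ was chosen in~\eqref{kappa} precisely so that $C\|x-y\|+2t$ plus lower-order fluctuations stays below $\kappa t$ for $t$ large, the complement has probability at most $A\exp(-Bt)$. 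Assembling the three bounds and multiplying by $|x+B_{Mt+2}|$, which is polynomial in $t$ and is absorbed into the exponential, yields the claim.

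The hard part is the fourth event $E^y_4$: it is the only place where one needs a quantitative control of how fast a surviving cluster issued near $x$ re-reaches $x$, so it is there that both Theorem~\ref{au-lineaire} and the restart construction are genuinely used; moreover it is precisely this event that dictates the value of $\kappa$, since the estimate is tight only when hitting $x$ within time of order $Mt$ is the typical behaviour.
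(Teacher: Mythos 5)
Your outline follows the paper's proof quite closely: Markov's inequality, a Palm-type computation over the marks of $\omega^1_y+\sum_{e\ni y}\omega_e+\delta_0$ (the paper does this by applying the strong Markov property at the successive jump times $T^\infty_n\le L$ rather than quoting Mecke's formula, but it is the same mechanism), and then a uniform exponential bound on $\P(E^y(x,t))$ obtained from the explicit Poisson computation, Lemma~\ref{richardson}, Theorem~\ref{petitsamas} and Theorem~\ref{au-lineaire}, summed over the polynomially many $y\in x+B_{Mt+2}$. One real imprecision in your assembly: the mark computation does not only produce the factor $|x+B_{Mt+2}|$; it produces in addition a factor equal to the expected number of reference marks in $[0,L]$, namely $1+L(1+2d\lambda_\infty)$. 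The paper's own final display is $(2Mt+5)^d\left(1+L(2d\lambda_\infty+1)\right)\P\left(E^y(x,t)\right)$, and this linear dependence on $L$ cannot simply be ``absorbed into the exponential''; it is harmless only because in the applications $L$ is itself polynomial in the relevant scale (e.g.\ $L=t$ with time scale $\sqrt t$ in the proof of Theorem~\ref{presquesousadditif}). As written, your last sentence claims a bound uniform in $L$ that the computation does not give (and which fails for $L$ huge compared to $t$); you should keep the $L$-factor explicitly, as the paper does.

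On the fourth piece of $E^y(x,t)$, you reach for heavier machinery than needed and your distance bookkeeping is slightly off. The paper argues: on $\{\tau^y=\infty\}$ there is a survivor $z\in A^y_{2t}$ with $\tau^z\circ\theta_{2t}=\infty$; up to the Richardson event $\{A^y_{2t}\not\subset y+B_{2Mt}\}$ this $z$ lies in $y+B_{2Mt}$, hence $\|x-z\|\le 3Mt+2$ (not $Mt+2$ as in your sketch), and Theorem~\ref{au-lineaire} applied from $z$ after the shift $\theta_{2t}$, with a union bound over the polynomially many candidates $z$, gives the exponential bound; the choice $\kappa=3M(1+C)$ is calibrated to dominate $2Mt+C\|x-z\|$, i.e.\ to the displacement of $z$, not merely of $y$. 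Your restart-based variant can be made to work (it essentially re-runs the proof of Theorem~\ref{au-lineaire}), but you must guarantee that the hit of $x$ you produce occurs after time $2t$, which is exactly what the paper's shift by $\theta_{2t}$ from the survivor $z$ accomplishes. Finally, a minor point: a death mark exactly at the reference time does not kill the shifted process ``at once'' under the open-interval conventions for paths, so your preliminary discarding of those marks is not quite justified as stated; what makes the computation go through is simply that the marks strictly after the reference time form an independent copy of the environment, so the extra mark at relative time $0$ can only make the first piece of $E^y$ fail and never increases $\P(E^y(x,t)\circ\theta_s)$.
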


\begin{proof} Here again, the quantity $N_L(x,t)$ only considers the alive or dead nature of the points (and not their ages) so we work with $A_t=\supp\xi_t$. 
Let $x \in\Z^d$, $t>0$ and $y \in x+B_{M t+2}$. First, we control the probability of $E^y(x,t)$. There exist $A_1,B_1,A_2,B_2>0$ such that, for all $t>0$, one has
\begin{itemize}
 \item $\P\left(\omega_y([0, t/2])=0\right)= \exp(- t/2)$,
 \item $\P(H^y_{t}\not\subset y+B_{Mt})\le A_1\exp(-B_1t)$ by Lemma~\ref{richardson},
 \item $\P(t<\tau^x<\infty)\leq A_2\exp(-B_2t)$ by Theorem~\ref{petitsamas},
 \item If $\tau^y=+\infty$, there exists $z\in A^y_{2 t}$ with $\tau^z\circ\theta_{2t}=+\infty$.
Thus, the choice of $\kappa$ implies that
\begin{align*}
 & \left\{\tau^y=+\infty, \inf\{s\ge2t: x\in A^y_{s}\}> \kappa t\right\}\\
&\qquad \subset\{A^y_{2 t}\not\subset y+B_{2M t}\}
\cup\bigcup_{z\in y+B_{2M t}}\{ t^z(x)\circ\theta_{2t}>(\kappa-2M) t \}\\
&\qquad \subset \{A^y_{2t}\not\subset y+B_{2Mt}\}\cup\bigcup_{z\in y+B_{2M t}}\left\{t^z(x) \circ\theta_{2t}>C\|x-z\|+Mt-3C \right\}.
\end{align*}
Hence, with Lemma~\ref{richardson} and Theorem~\ref{au-lineaire},
\begin{align*}
 &\P(\tau^y=+\infty, \inf\{s\ge2 t: x\in A^y_{s}\}>\kappa t)\\
&\qquad\le A\exp(-2BM t)+(1+4Mt)^d A\exp\left(-B(Mt-3C)\right).
\end{align*}
\end{itemize}
So, we obtain the existence of $A_3,B_3>0$ such that for all $x\in\Z^d$, $t>0$ and for all $y\in x+B_{M t+2}$ we have
\[\P( E^y(x,t))\le A_3\exp(-B_3t).\]

For $y\in x+B_{Mt+2}$ we write $\beta_y=\omega_y^1+\sum_{e\in\E^d: y\in e}\omega_e$. $\beta_y$ is a Poisson point process with
intensity $1+2d\lambda_\infty$. We recall that $(T^\infty_n)_{n\ge1}$ is the increasing sequence of the times given by this process (with $T^\infty_0=0$). One has
\begin{align*}
 \P(N_L(x,t) \ge1) \le\E[N_L(x,t)] &= \sum_{y\in x+B_{Mt+2}}\E\left[\int_0^L \1_{E^y(x,t)}\circ\theta_s \,d(\beta_y+\delta_0)(s)\right]\\
 &= \sum_{y\in x+B_{Mt+2}}\E\left[\sum_{n=0}^{+\infty}\1_{\{T^\infty_n\le L\}}\1_{E^y(x,t)}\circ\theta_{T^\infty_n}\right]\\
  &= \sum_{y\in x+B_{Mt+2}}\sum_{n=0}^{+\infty}\E\left[\1_{\{T^\infty_n\le L\}}\right] \P(E^y(x,t))\\
   &\le \sum_{y\in x+B_{Mt+2}} \left(1+ \E[\beta_y([0,L])] \right) \P(E^y(x,t)) \\
   &\le (2Mt+5)^d \left(1+L(2d\lambda_\infty+1)\right)\P(E^y(x,t)).\qedhere
\end{align*}
\end{proof}

\subsection{Bound for the lack of subadditivity}

To bound $\sigma(x+y)-[\sigma(x)+\sigma(y)\circ\tilde{\theta}_x]$, we employ the procedure we have just mentioned around site $x+y$. To initiate the recursive process, one can benefit here from the existence of an infinite start at the specific point $(x+y, \sigma(x)+\sigma(y)\circ\tilde{\theta}_y)$.
We recall Theorem~\ref{presquesousadditif}:

\thmpresquesousadditif*

\begin{proof}Let $x,y \in\Z^d$ and $t>0$. We set $s=\sigma(x)+\sigma(y)\circ\tilde{\theta}_x$:
\begin{align*}
 \Pbarre\left( \sigma(x+y)>\sigma(x)+\sigma(y)\circ\tilde{\theta}_x+t\right)& \le  \Pbarre\left( K(x+y)> \frac{\sqrt t}{\kappa}\right)\\
&+\Pbarre\left(K(x+y) \le\frac{\sqrt t}{\kappa},\sigma(x+y)\ge s + t\right).
\end{align*}
Thanks to Lemma~\ref{K}, $\Pbarre\left( K(x+y)> \frac{\sqrt t}{\kappa}\right)\le \frac{1}{\rho}\exp\left(\frac{\sqrt{t}}{\kappa}\ln(1-\rho)\right)$.
For the second term we want to apply Lemma~\ref{boite}. Note that if $K(x+y) \le\frac{\sqrt t}{\kappa}$, then
$t \ge K(x+y) \kappa\sqrt t$ and $\tau^0=\infty$. Then 
\[\left\{N_{K(x+y) \kappa\sqrt t}\left(x+y,\sqrt t\right) \ge1\right\} \subset\left\{
N_{t}\left(x+y,\sqrt t\right) \ge1\right\}.\]
So using Lemma~\ref{boite} around $x+y$, on a scale $\sqrt t$, at initial time $s$ and with the source
point $u=x+y$,
\begin{align*}
\Pbarre & \left(\tau^0=+\infty, K(x+y) \le\frac{\sqrt t}{\kappa}, \sigma(x+y)\ge s + K(x+y)\kappa\sqrt t \right)\\
& \le \Pbarre\left(N_t\left(x+y, \sqrt t\right)\circ\theta_{s} \ge1\right)+\Pbarre\left(\exists i<K(x+y): v_i(x+y)-u_i(x+y) >\sqrt t\right)\\
& \le \Pbarre\left(N_t\left(x+y, \sqrt t\right)\circ\theta_{s} \ge1\right)+A\exp(-Bt)
\end{align*}
by Lemma~\ref{vimoinsui}.
Since $N_t(x+y,\sqrt{t})\circ{\theta_s}=N_t(0,\sqrt{t})\circ T_x\circ T_y\circ \theta_{\sigma(x)}\circ\theta_{\sigma(y)\circ\tilde{\theta}_x}$, we have
\[N_t\left(x+y,\sqrt t\right)\circ\theta_{s}=N_t\left(0,\sqrt{t}\right)\circ\tilde{\theta
}_y\circ\tilde{\theta}_x.\]

Thus
$ \Pbarre(N_t(x+y,\sqrt t)\circ\theta_s \ge1)= \Pbarre (N_t(0,\sqrt t)\ge1)$, which is controlled by Lemma~\ref{controleboite}.
\end{proof}

\begin{corollary}\label{momentsecart}
For $x,y \in\Z^d$, set $r(x,y)=(\sigma(x+y)-(\sigma(x)+\sigma(y)\circ\tilde{\theta}_x))^+$.
For any $p\ge1$, there exists $M_p>0$ such that for all $x,y\in\Z^d$, \[\E[r(x,y)^p]\le M_p.\]
\end{corollary}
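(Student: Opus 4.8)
The plan is to read off the moment bound directly from the stretched-exponential tail estimate of Theorem~\ref{presquesousadditif}, via the layer-cake (Fubini) representation of the $p$-th moment. Since $r(x,y)\ge 0$, one has under $\Pbarre$
\[
\E[r(x,y)^p]=\int_0^{+\infty}\Pbarre\big(r(x,y)^p>u\big)\,du=p\int_0^{+\infty}t^{p-1}\,\Pbarre\big(r(x,y)>t\big)\,dt,
\]
the second equality being the change of variable $u=t^p$.

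Next I would insert Theorem~\ref{presquesousadditif}, which provides constants $A,B>0$, \emph{independent of $x$ and $y$}, with $\Pbarre(r(x,y)\ge t)\le A\exp(-B\sqrt t)$ for all $t>0$. This yields
\[
\E[r(x,y)^p]\le pA\int_0^{+\infty}t^{p-1}\exp(-B\sqrt t)\,dt .
\]
The substitution $t=s^2$ transforms the right-hand side into $2pA\int_0^{+\infty}s^{2p-1}\exp(-Bs)\,ds=2pA\,\Gamma(2p)/B^{2p}$, a finite quantity depending only on $p$ (through the fixed constants $A$ and $B$). Setting $M_p=2pA\,\Gamma(2p)/B^{2p}$ gives the claimed bound, and since this quantity does not involve $x$ or $y$, the uniformity over $x,y\in\Z^d$ is automatic.

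There is essentially no obstacle in this argument: the only point worth stressing is that the constants $A,B$ furnished by Theorem~\ref{presquesousadditif} are genuinely uniform in $x,y$, which is exactly how that theorem is stated, so $M_p$ inherits the same uniformity. The remainder is a routine Gamma-integral computation, and any other rate of the form $A\exp(-Bt^{\alpha})$ with $\alpha>0$ would suffice equally well for the conclusion.
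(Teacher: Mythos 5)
Your argument is correct and is essentially the paper's own proof: the paper likewise writes $\overline{\E}[r(x,y)^p]=\int_0^{+\infty}pu^{p-1}\,\overline{\P}(r(x,y)>u)\,du$ and invokes Theorem~\ref{presquesousadditif}, whose constants are uniform in $x,y$. Your explicit Gamma-integral evaluation just makes the finiteness and uniformity of $M_p$ concrete.
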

\begin{proof}
We write $\overline{\E}[r(x,y)^p]=\int_0^{+\infty}pu^{p-1}\Pbarre(r(x,y)>u) \,du$ and use Theorem~\ref{presquesousadditif}.
\end{proof}
\subsection{Difference between $\sigma$ and $t$}
\begin{proposition}\label{difference}
There exist $A,B,\alpha>0$ such that for every $z>0$ and every $x\in\Z^d$, 
\[\Pbarre\left(\sigma(x)\ge t(x)+K(x)\left(\alpha\log(1+\|x\|) +z\right)\right)\le A\exp(-Bz).\]
\end{proposition}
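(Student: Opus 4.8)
The plan is to estimate $\sigma(x)-t(x)$ through the excursion decomposition of the essential hitting time. We work under $\Pbarre$, so $\tau^0=+\infty$ almost surely; then $u_1(x)=t(x)$, $\sigma(x)=u_{K(x)}(x)$, and, writing $r=\alpha\log(1+\|x\|)+z$,
\[
\{\sigma(x)\ge t(x)+K(x)r\}=\Big\{\sum_{i=1}^{K(x)-1}\big(u_{i+1}(x)-u_i(x)\big)\ge K(x)r\Big\}\subset\bigcup_{i\ge1}\big\{i<K(x),\ u_{i+1}(x)-u_i(x)\ge r\big\},
\]
the last inclusion holding because the sum has only $K(x)-1$ summands (if each were $<r$, the sum would be $<K(x)r$). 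It thus suffices to bound, uniformly in $i$, the probability of a single large inter-arrival gap $u_{i+1}(x)-u_i(x)\ge r$ on $\{i<K(x)\}$, and then to sum over $i$. If one prefers, one may additionally split $u_{i+1}-u_i=(v_i-u_i)+(u_{i+1}-v_i)$ and treat $\sum_{i<K(x)}(v_i-u_i)\le K(x)\max_{i<K(x)}(v_i-u_i)$ separately via Lemma~\ref{vimoinsui}, as in the proof of Theorem~\ref{presquesousadditif}; the argument below handles $u_{i+1}-u_i$ directly.

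For a fixed $i$, on $\{i<K(x)\}$ one has $v_i(x)<+\infty$ (Lemma~\ref{Kgeom}), so Lemma~\ref{boite} applies with scale $t:=r/\kappa$ (so that $\kappa t=r$), base time $s:=0$, and a window length $L$ to be chosen, yielding
\[
\{i<K(x),\ u_{i+1}(x)-u_i(x)>r\}\subset\{N_L(x,r/\kappa)\ge1\}\cup\{u_i(x)<r/\kappa\}\cup\{u_i(x)>L\}.
\]
The second event lies in $\{t(x)<r/\kappa\}$ because $u_i(x)\ge u_1(x)=t(x)$; by the finite speed of propagation (Lemma~\ref{richardson}) one has $t(x)\ge c\|x\|$ off an event of probability $\le Ae^{-B\|x\|}$, so this term is harmless provided $z$ is at most of order $\|x\|$, the complementary range being treated below. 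For the third event, note that once the conclusion $u_{j+1}-u_j\le r$ of Lemma~\ref{boite} is available for all $j<i$ (which only requires $u_j\le L$ for those $j$), induction gives $u_i(x)\le t(x)+(i-1)r$; hence with
\[
L:=(C\|x\|+z)+\|x\|^{10}\,r
\]
the event $\{u_i(x)>L\}$ is contained in $\{t(x)>C\|x\|+z\}\cup\{K(x)>\|x\|^{10}\}$, of probability $\le Ae^{-Bz}$ by Theorem~\ref{au-lineaire} and Lemma~\ref{Kgeom}.

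It remains to control $\{N_L(x,r/\kappa)\ge1\}$. By Lemma~\ref{controleboite}, $\P(N_L(x,r/\kappa)\ge1)\le A(r/\kappa)^d(1+L)e^{-Br/\kappa}$; since $L$ is polynomial in $\|x\|$ of a fixed degree while $e^{-Br/\kappa}\le(1+\|x\|)^{-B\alpha/\kappa}e^{-Bz/\kappa}$, choosing $\alpha$ large enough that $B\alpha/\kappa$ exceeds that degree (plus $d$) makes this $\le A'e^{-B'z}$. Finally the union over $i<K(x)\le\|x\|^{10}$ (on the good event) costs only a polynomial-in-$\|x\|$ factor, again absorbed by enlarging $\alpha$. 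Collecting the four contributions yields the claimed bound $Ae^{-Bz}$. This also shows exactly where the logarithm is needed: it is the price for the factor $1+L$ in Lemma~\ref{controleboite}, $L$ being forced to be of polynomial size in $\|x\|$.

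The main obstacle is the apparent circularity in the choice of $L$: the time window over which bad growth points must be excluded has to dominate $\max_{i<K(x)}u_i(x)$, that is, essentially $\sigma(x)$ itself, which is the quantity being estimated. This is broken by first discarding the exponentially rare events $\{K(x)>\|x\|^{10}\}$ and $\{t(x)>C\|x\|+z\}$, which pins $L$ down to a polynomial of fixed exponent, and then taking $\alpha$ large enough to kill that polynomial. A secondary technical point is that applying Lemma~\ref{boite} from $s=0$ requires $u_i(x)\ge r/\kappa$, which fails only when $z$ is of order $\|x\|$ or larger; in that range one reruns the same scheme with the scale $r/\kappa$ replaced by a small constant multiple of $\|x\|$ (legitimate since then $u_i(x)\ge t(x)\gtrsim\|x\|$), and the resulting bound $\sigma(x)\le(1+2\kappa)K(x)\cdot(\mathrm{const}\cdot\|x\|)$ is $\le t(x)+K(x)r$ precisely because there $r\gtrsim z\gtrsim\|x\|$. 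The finitely many $x$ with $\|x\|$ bounded are dispatched by a direct exponential-tail estimate for $\sigma(x)-t(x)$ at fixed $x$, obtained by combining the geometric tail of $K(x)$ with Lemma~\ref{vimoinsui} and the at-least-linear growth (Theorem~\ref{au-lineaire}).
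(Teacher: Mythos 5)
Your core mechanism is sound and is in the spirit of what the paper (deferring to Proposition 17 of~\cite{GMPCRE}) does: pigeonhole the increments $u_{i+1}(x)-u_i(x)$, apply Lemma~\ref{boite} at base time $s=0$ and scale $r/\kappa$ with $r=\alpha\log(1+\|x\|)+z$, and let the factor $\alpha\log(1+\|x\|)$ absorb, through Lemma~\ref{controleboite} (used, correctly, in the form with the $(1+L)$ factor coming from its proof), the polynomial size of the window $L$. In the regime where $r/\kappa$ is at most a small multiple of $\|x\|$ your bound closes, up to one small slip: the cut $K(x)\le\|x\|^{10}$ does not depend on $z$, so $\Pbarre(K(x)>\|x\|^{10})\le\rho^{-1}(1-\rho)^{\|x\|^{10}}$ is constant in $z$ and cannot be $\le A e^{-Bz}$ for fixed $x$ and $z\to\infty$; this is easily repaired by a cut growing with $z$ (say $K(x)\le\|x\|+z$), which keeps $L$ polynomial in $\|x\|$ and $z$ and still absorbed by $e^{-Br/\kappa}$.

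The genuine gap is the complementary regime $z\gtrsim\|x\|$, in particular fixed $x$ and arbitrarily large $z$, where the hypothesis $u_i(x)\ge s+t=r/\kappa$ of Lemma~\ref{boite} cannot be secured by $u_i(x)\ge t(x)\ge c\|x\|$: there $\Pbarre\left(t(x)<r/\kappa\right)$ is not small at all, so your main estimate does not apply, and neither of your patches delivers a bound decaying in $z$. Rerunning the scheme at scale $c'\|x\|$ produces error terms $\Pbarre\left(N_{L}(x,c'\|x\|)\ge1\right)$ and $\Pbarre\left(t(x)<c'\|x\|\right)$ of order $e^{-B\|x\|}$ (times polynomial factors), which for fixed $x$ are constants and in no way $\le Ae^{-Bz}$ uniformly in $z$; and the region left over is not ``finitely many $x$'': for each $z$ it contains all $x$ with $\|x\|\lesssim z$, so uniform constants are required there. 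Your fallback ``direct exponential-tail estimate for $\sigma(x)-t(x)$ at fixed $x$'' from the geometric tail of $K(x)$, Lemma~\ref{vimoinsui} and Theorem~\ref{au-lineaire} asserts precisely the hard part: Lemma~\ref{vimoinsui} only controls $v_i-u_i$, while the regeneration gaps $u_{i+1}-v_i$ depend on the whole configuration $\xi_{v_i(x)}$ at the random time $v_i(x)$, and Theorem~\ref{au-lineaire} concerns the process started at time $0$ from a deterministic finite configuration; bridging that restart is exactly what the bad-growth-point events $E^y(x,t)$ and Lemma~\ref{boite} were built for, so it cannot be invoked as ``direct''. To close the argument you need a box estimate valid when the scale exceeds $t(x)$ — for instance a window anchored at (a controlled value of) $t(x)$, or a construction conditioned on $K(x)=k$ with $L\sim C\|x\|+z+kr$ and a $z$-dependent cut on $K(x)$ — which is the purpose of the ``new box of bad growth points'' introduced in Proposition 17 of~\cite{GMPCRE} that the paper's proof follows, rather than a reuse of Lemma~\ref{boite} with $s=0$ alone.
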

\begin{proof} We proceed in the same spirit as previously. First, we introduce a new box of bad growth points. Then, we show that the probability of these points is small and we conclude by the fact that the quantity to control is dominated by this probability. For further details, see Proposition 17 of~\cite{GMPCRE}.
 \end{proof}
 
We now have all the elements to prove Theorem~\ref{differenceuniforme}:

\thmdifferenceuniforme*
\begin{proof} For every $p \ge1$, we will control $\overline{\E} \left(|\sigma(x)-t(x)|^p\right)$. According to Proposition~\ref{difference}, the random variable $V_x=\frac{\sigma(x)-t(x)}{K(x)}-\alpha\log (1+\| x\|)$ is stochastically dominated by a random variable $W$ with exponential moments. Using the Minkowski inequality, we have
\begin{align*}
(\Ebarre |\sigma(x)-t(x)|^p)^{\frac{1}{p}} & =(\Ebarre |K(x)(\alpha\log(1+\|x\|)+V_x)|^p)^{\frac{1}{p}}  \\
&\le \alpha\log(1+\|x\|)(\Ebarre K(x)^p)^{\frac{1}{p}}+(\Ebarre[ K(x)^p V_x^p])^{\frac{1}{p}}\\
& \le \alpha\log(1+\|x\|)(\Ebarre K(x)^p)^{\frac{1}{p}}+(\Ebarre K(x)^{2p} \Ebarre V_x^{2p})^{\frac{1}{2p}}\\
& \le \alpha\log(1+\|x\|)(\Ebarre G^p)^{\frac{1}{p}}+(\Ebarre G^{2p} \E W^{2p})^{\frac{1}{2p}},
\end{align*}
where $G$ is a geometric random variable which stochastically dominates $K(x)$ by Lemma~\ref{K}. So, there exists $C(p)>0$ such that for every $x \in\Z^d$, 
\[\overline{\E} \left(|\sigma(x)-t(x)|^p\right) \le C(p) \left(\log(1+\|x\|)\right)^{p}.\]
Let $p> d$. We have:
\[\sum_{x\in\Z^d} \Ebarre\frac{|\sigma(x)-t(x)|^p}{(1+\|x\|)^p}
\le C(p) \sum_{x\in\Z^d} \frac{(\log(1+\|x|))^{p}}{(1+\|x\|)^p}<+\infty.\]

So $(\frac{|\sigma(x)-t(x)|}{1+\|x\|})_{x \in\Z^d}$ is almost surely in $\ell^p(\Z^d)$ and thus goes to $0$.
\end{proof}

We deduce three fundamental properties of $\sigma$:

\begin{corollary}\label{au-lineairesigma}
There exist $A,B,C>0$ such that
\[\forall x \in\Z^d,\forall t>0,~ \Pbarre\left(\sigma(x)\ge C\|x\|+t\right) \le A\exp\left(B\sqrt{t}\right).\]
\end{corollary}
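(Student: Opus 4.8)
The plan is to combine three quantitative ingredients already established: the at--least--linear growth of the ordinary hitting time (Theorem~\ref{au-lineaire} applied to $f=\delta_0$), the comparison between $\sigma$ and $t$ given by Proposition~\ref{difference}, and the geometric tail of $K(x)$ from Lemma~\ref{Kgeom} (the bound $\P(K(x)>k)\le(1-\rho)^k$). Each of these is first transferred from $\P$ to $\Pbarre$ by dividing by $\rho=\rho(\Lambda,\gamma)>0$, which only affects multiplicative constants. Note that the corollary's exponent should read $-B\sqrt t$; also the case $x=0$ is trivial, since under $\Pbarre$ one has $u_1(0)=0$, $v_1(0)=\tau^0=+\infty$, hence $K(0)=1$ and $\sigma(0)=0$.

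Fix $x\in\Z^d$ and $t>0$, and set $k=\lfloor\delta\sqrt t\rfloor$, $z=\sqrt t$ and $s=t/4$, where $\delta>0$ is a small constant chosen below. I would decompose $\{\sigma(x)\ge C\|x\|+t\}$ according to whether (i) $K(x)>k$, (ii) $\sigma(x)\ge t(x)+K(x)(\alpha\log(1+\|x\|)+z)$, or (iii) $t(x)\ge C_0\|x\|+s$, where $\alpha$ is the constant of Proposition~\ref{difference} and $C_0$ that of Theorem~\ref{au-lineaire} (for $f=\delta_0$). On the complement of (i), (ii), (iii) one has $K(x)\le k$ and $\sigma(x)<t(x)+k(\alpha\log(1+\|x\|)+z)<C_0\|x\|+s+\alpha k\log(1+\|x\|)+\delta t$. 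Using $\log^2(1+u)\le c_0(1+u)$ for all $u\ge0$ together with $2ab\le a^2+b^2$, one gets $\alpha k\log(1+\|x\|)\le\tfrac{\alpha}{2}k^2+\tfrac{\alpha}{2}\log^2(1+\|x\|)\le\tfrac{\alpha\delta^2}{2}t+\tfrac{\alpha c_0}{2}(1+\|x\|)$, which is at most $\tfrac14 t+C_1(1+\|x\|)$ once $\delta\le\min(1/4,(2\alpha)^{-1/2})$. Since $s=t/4$, this yields $\sigma(x)<C_0\|x\|+\tfrac14t+\tfrac14t+C_1(1+\|x\|)+\tfrac14t\le C\|x\|+t$ for a constant $C$ depending only on $C_0,C_1$ (bounded values of $t$ being absorbed by enlarging $A$ afterwards). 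Hence $\{\sigma(x)\ge C\|x\|+t\}\subset\text{(i)}\cup\text{(ii)}\cup\text{(iii)}$.

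It then remains to bound the three probabilities under $\Pbarre$. Event (i) has probability at most $\rho^{-1}(1-\rho)^k\le A_1\exp(-B_1\sqrt t)$ by Lemma~\ref{Kgeom} and the choice $k=\lfloor\delta\sqrt t\rfloor$; event (ii) has probability at most $A_2\exp(-B_2 z)=A_2\exp(-B_2\sqrt t)$ by Proposition~\ref{difference}; and event (iii) has probability at most $\rho^{-1}A\exp(-Bs)=\rho^{-1}A\exp(-Bt/4)\le A_3\exp(-B_3\sqrt t)$ by Theorem~\ref{au-lineaire}. A union bound gives $\Pbarre(\sigma(x)\ge C\|x\|+t)\le A\exp(-B\sqrt t)$. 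I do not expect a genuine obstacle here; the only delicate point is that Proposition~\ref{difference} has the factor $K(x)$ multiplying both $z$ and $\log(1+\|x\|)$, which forces us to cap $K(x)$ at order $\sqrt t$ and hence to take $z$ of order $\sqrt t$ — this is exactly what degrades the rate from $t$ to $\sqrt t$, in line with the rate already appearing in Theorem~\ref{presquesousadditif}.
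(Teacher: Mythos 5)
Your proof is correct and follows essentially the same route as the paper: the same three-event decomposition (the hitting-time tail from Theorem~\ref{au-lineaire}, the geometric tail of $K(x)$ from Lemma~\ref{Kgeom}, and Proposition~\ref{difference} with a truncation of $K(x)$ at order $\sqrt t$), differing only in bookkeeping — the paper takes $K(x)\le\frac1{2\alpha}\sqrt{\|x\|+t/2}$ and $z=\alpha\sqrt{\|x\|+t/2}$ while you take $K(x)\le\delta\sqrt t$, $z=\sqrt t$ and absorb the $\log(1+\|x\|)$ term into $C\|x\|$. Your remark that the exponent in the statement should read $-B\sqrt t$ is also correct; it is a sign typo in the paper.
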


\begin{proof}
 Let $\alpha$ be given as in Proposition~\ref{difference}.
 \begin{multline*}
\Pbarre\Big(\sigma(x)> (C+1)\|x\|+t\Big)  \le \Pbarre\left(t(x)\ge C{\|x\|}+t/2 \right) +\Pbarre\left( K(x)>\frac1{2\alpha} \sqrt{\|x\|+t/2}\right)\\
+\Pbarre\left(\sigma(x)> \left(C+1\right)\|x\|+t,~t(x)< C{\|x\|}+t/2,~ K(x)\le \frac1{2\alpha} \sqrt{\|x\|+t/2}\right).
\end{multline*}
The first term is controlled with the estimate of Theorem~\ref{au-lineaire}, the second one with Lemma~\ref{K}
and for the last one note that if $K(x)\le\frac1{2\alpha} \sqrt{\|x\|+t/2}$ then
\begin{align*} 
K(x)[\alpha\log(1+\|x\|)+\alpha\sqrt{\|x\|+t/2}]&\le \frac1{2\alpha} \sqrt{\|x\|+t/2}[\alpha\sqrt{\|x\|}+\alpha\sqrt{\|x\|+t/2}]\\
&\le\|x\|+t/2,
\end{align*}
so the last term is smaller than  
\[ \Pbarre\left(\sigma(x)>t(x)+K(x)\left(\alpha\log(1+\|x\|) +\alpha\sqrt{\|x\|+t/2}\right)\right),\]
which is controlled by Proposition~\ref{difference}.
\end{proof}

\begin{corollary}
\label{momentsigma}
For every $p\ge1$, there exists a constant $C(p)>0$ such that
\[\forall x\in\Z^d,~\overline{\E}[\sigma(x)^p] \le C(p) (1+\|x\|)^{p}.\]
\end{corollary}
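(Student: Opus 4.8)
The plan is to obtain this moment bound directly from the stretched‑exponential tail estimate of Corollary~\ref{au-lineairesigma}; no new probabilistic ingredient is needed, only an elementary integration. Recall first that under $\Pbarre$ the random variable $\sigma(x)$ is almost surely finite (on the survival event one has $K(x)<+\infty$ and $u_{K(x)}(x)<+\infty$ by Lemma~\ref{Kgeom}), so $\overline{\E}[\sigma(x)^p]$ is a well‑defined element of $[0,+\infty]$, and it remains to bound it.

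The first step is to write, by the layer‑cake formula,
\[\overline{\E}[\sigma(x)^p]=\int_0^{+\infty} p\,u^{p-1}\,\Pbarre\big(\sigma(x)>u\big)\,du,\]
and to split this integral at the level $u_0=C\|x\|$, where $C$ is the constant furnished by Corollary~\ref{au-lineairesigma}. On the interval $[0,u_0]$ one simply bounds the probability by $1$, which contributes at most $u_0^p=(C\|x\|)^p\le C^p(1+\|x\|)^p$.

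For the tail part, the second step is to perform the change of variable $u=C\|x\|+t$, so that
\[\int_{u_0}^{+\infty} p\,u^{p-1}\,\Pbarre\big(\sigma(x)>u\big)\,du=\int_0^{+\infty} p\,(C\|x\|+t)^{p-1}\,\Pbarre\big(\sigma(x)>C\|x\|+t\big)\,dt,\]
and to insert the bound $\Pbarre(\sigma(x)\ge C\|x\|+t)\le A\exp(-B\sqrt t)$ of Corollary~\ref{au-lineairesigma}. Using the elementary inequality $(a+b)^{p-1}\le 2^{p-1}(a^{p-1}+b^{p-1})$ for $a,b\ge0$, the right‑hand side is at most
\[2^{p-1}Ap\,(C\|x\|)^{p-1}\int_0^{+\infty}\! e^{-B\sqrt t}\,dt+2^{p-1}Ap\int_0^{+\infty}\! t^{p-1}e^{-B\sqrt t}\,dt.\]
Both integrals are finite constants depending only on $p$ and $B$ (the substitution $t=v^2$ turns them into the standard Gamma integrals $\int_0^{+\infty} 2v\,e^{-Bv}\,dv$ and $\int_0^{+\infty} 2v^{2p-1}e^{-Bv}\,dv$), so the tail part is bounded by a constant times $(1+\|x\|)^{p-1}\le(1+\|x\|)^p$. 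Adding the two contributions yields the claim with a suitable $C(p)>0$.

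I do not expect a genuine obstacle: the only mildly unusual feature is that the decay in Corollary~\ref{au-lineairesigma} is $e^{-B\sqrt t}$ rather than $e^{-Bt}$, but a stretched exponential still has finite moments of every order, so the computation is unchanged. Alternatively one could start from $\sigma(x)\le t(x)+K(x)\big(\alpha\log(1+\|x\|)+\cdots\big)$ via Proposition~\ref{difference} and combine moment bounds for $t(x)$ and $K(x)$ by Hölder's inequality, exactly in the spirit of the proof of Theorem~\ref{differenceuniforme}; the route through the tail estimate of Corollary~\ref{au-lineairesigma} is, however, the shortest and most self‑contained.
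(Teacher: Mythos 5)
Your proof is correct and follows essentially the same route as the paper: both rest on the uniform stretched-exponential tail bound of Corollary~\ref{au-lineairesigma} plus an elementary integration, the only cosmetic difference being that the paper splits off the term $C\|x\|$ via the Minkowski inequality and then integrates the tail of $\left(\sigma(x)-C\|x\|\right)^+$, whereas you split the layer-cake integral at $u_0=C\|x\|$ and use $(a+b)^{p-1}\le 2^{p-1}(a^{p-1}+b^{p-1})$. No gap to report.
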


\begin{proof}
Using the Minkowski inequality, one has
\[
(\Ebarre[\sigma(x)^p])^{1/p}\le C\|x\|+\left(\Ebarre\left[\left(\left(\sigma(x)-C\|x\|\right)^+\right)^p\right]\right)^{1/p}.
\]
Moreover, by Corollary~\ref{au-lineairesigma}, \[\Ebarre\left[\left(\left(\sigma(x)-C\|x\|\right)^+\right)^p\right]=
\int_0^{+\infty} pu^{p-1}\Pbarre\left(\sigma(x)-C\| x\|>u\right) du<+\infty.\qedhere\]
\end{proof}

From the control of the tail of $\sigma$ together with the almost subadditivity, we deduce the uniform continuity of $\sigma$: 

\begin{corollary}\label{uniformesigma}
For every $\epsilon>0$, $\Pbarre$-a.s., there exists $R>0$ such that
\[\forall x,y \in\Z^d,~(\|x\|\ge R\text{ and }\|x-y\|\le\epsilon\|x\|)\Longrightarrow\left(|\sigma(x)-\sigma(y)|\le C\epsilon\|x\|\right).\]
\end{corollary}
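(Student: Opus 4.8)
\emph{Proof strategy.} The plan is to deduce the uniform continuity of $\sigma$ from its almost subadditivity (Theorem~\ref{presquesousadditif}, quantified through the defect $r$ of Corollary~\ref{momentsecart}) together with the stretched-exponential tail bound of Corollary~\ref{au-lineairesigma}, by a Borel--Cantelli argument over pairs of sites. Fix $\epsilon\in(0,1)$ (this is the only range needed in the sequel). Then any pair $(x,y)$ with $x\neq0$ and $\|x-y\|\le\epsilon\|x\|$ automatically satisfies $\|y\|\ge(1-\epsilon)\|x\|>0$, so both $x$ and $y$ are nonzero; call such a pair \emph{admissible}.

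\emph{Step 1 (decomposition).} For an admissible pair I would write $y=x+(y-x)$ and $x=y+(x-y)$ and apply Corollary~\ref{momentsecart} in both directions, getting $\sigma(y)\le\sigma(x)+\sigma(y-x)\circ\tilde\theta_x+r(x,y-x)$ and $\sigma(x)\le\sigma(y)+\sigma(x-y)\circ\tilde\theta_y+r(y,x-y)$, hence
\[
|\sigma(x)-\sigma(y)|\le\sigma(y-x)\circ\tilde\theta_x+\sigma(x-y)\circ\tilde\theta_y+r(x,y-x)+r(y,x-y).
\]
Since $x\neq0$ and $y\neq0$, Theorem~\ref{ergo} guarantees that, under $\Pbarre$, the variables $\sigma(y-x)\circ\tilde\theta_x$ and $\sigma(x-y)\circ\tilde\theta_y$ have the same laws as $\sigma(y-x)$ and $\sigma(x-y)$, and one uses that $\|y-x\|=\|x-y\|\le\epsilon\|x\|$ is small relative to $\|x\|$.

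\emph{Step 2 (single-pair tail).} Next I would bound each of the four terms. Corollary~\ref{au-lineairesigma}, applied with $t=\epsilon\|x\|$ together with $\|y-x\|\le\epsilon\|x\|$, controls $\Pbarre\big(\sigma(y-x)\circ\tilde\theta_x\ge(C+1)\epsilon\|x\|\big)$ and the analogous quantity for $\sigma(x-y)\circ\tilde\theta_y$, while Theorem~\ref{presquesousadditif} with $t=\epsilon\|x\|$ controls $\Pbarre\big(r(x,y-x)\ge\epsilon\|x\|\big)$ and $\Pbarre\big(r(y,x-y)\ge\epsilon\|x\|\big)$; each of these is at most $A\exp(-B\sqrt{\epsilon\|x\|})$. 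A union bound then produces constants $A',B',C>0$ such that, for every admissible pair,
\[
\Pbarre\big(|\sigma(x)-\sigma(y)|\ge C\epsilon\|x\|\big)\le A'\exp\!\big(-B'\sqrt{\epsilon\|x\|}\big).
\]

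\emph{Step 3 (Borel--Cantelli).} Finally, for each integer $n\ge1$ there are at most $c_d\,n^{2d-1}$ admissible pairs with $n\le\|x\|<n+1$ (choose $x$ on a sphere of radius $\sim n$, then $y$ in a ball of radius $\le\epsilon(n+1)$ around it), so, the stretched exponential decaying faster than any polynomial,
\[
\sum_{(x,y)\,:\,\|x-y\|\le\epsilon\|x\|}\Pbarre\big(|\sigma(x)-\sigma(y)|\ge C\epsilon\|x\|\big)\le\sum_{n\ge1}c_d\,n^{2d-1}A'\exp\!\big(-B'\sqrt{\epsilon n}\big)<+\infty.
\]
By Borel--Cantelli, $\Pbarre$-almost surely only finitely many admissible pairs satisfy $|\sigma(x)-\sigma(y)|\ge C\epsilon\|x\|$, and taking $R$ larger than $\|x\|$ over these finitely many exceptions gives the claim. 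The step I expect to be most delicate is Step~1: one has to keep track of the two composed shifts $\tilde\theta_x$ and $\tilde\theta_y$ and invoke the identical-distribution part of Theorem~\ref{ergo} in the correct order, so that both ``short'' increments $\sigma(y-x)$ and $\sigma(x-y)$ genuinely inherit the small-norm tail estimate; the counting and the summation in Step~3 are routine.
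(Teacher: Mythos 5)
Your proof is correct and follows essentially the same route as the paper: both control $|\sigma(x)-\sigma(y)|$ via the almost-subadditive decomposition with the remainder $r$, use the equidistribution of $\sigma(z)\circ\tilde{\theta}_x$ under $\Pbarre$ (Theorem~\ref{ergo}), invoke the stretched-exponential tails of Corollary~\ref{au-lineairesigma} and Theorem~\ref{presquesousadditif}, and conclude by a polynomial union bound over pairs plus Borel--Cantelli. The only cosmetic differences are that you write the two one-sided subadditivity inequalities explicitly (and restrict to $\epsilon<1$ so that $x,y\neq0$), whereas the paper symmetrizes by letting the base point range over $(1-\epsilon)m\le\|x\|\le(1+\epsilon)m$; neither affects the validity of the argument.
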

\begin{proof} For $m \in\N$ and $\varepsilon>0$, we define the event
\[B_m(\epsilon)=\{\exists x,y \in\Z^d:\|x\|=m, \|x-y\|\le
\epsilon m \mbox{ and } |\sigma(x)-\sigma(y)|>C\epsilon m \}.\]
Noting that
\begin{align*}
B_m(\epsilon) \subset\mathop{\bigcup_{(1-\epsilon) m\le\|x\| \le(1+\epsilon)m}}_{\|x-y\|\le\epsilon m}\{\sigma(y-x)\circ\tilde{\theta}_{x}+r(x,y-x)>C\epsilon m\},
\end{align*}
we have
\begin{align*}
\Pbarre( B_m(\epsilon) )& \le \mathop{\sum_{(1-\epsilon) m\le\|x\| \le(1+\epsilon)m}}_{\|z\|\le \epsilon m} \Pbarre\left(\sigma(z)\circ\tilde{\theta}_{x}+r(x,z)>C\epsilon m\right)\\
& \le \mathop{\sum_{(1-\epsilon) m\le\|x\| \le(1+\epsilon)m}}_{\|z\|\le \epsilon m} \Pbarre\left(\sigma(z)>2C\epsilon m/3\right)+\Pbarre\left(r(x,y-x)>C\epsilon m/3\right).
\end{align*}
Using the controls of Corollary~\ref{au-lineairesigma} and Theorem~\ref{presquesousadditif}, we have
\begin{align*}
\Pbarre( B_m(\epsilon) )
& \le (1+2\epsilon m)^d\left(1+2(1+\epsilon) m\right)^dA\exp\left(-B\sqrt{C\epsilon m /3}\right)\\
& {} +A'\exp\left(-B'\sqrt{C'\epsilon m /3}\right).
\end{align*}
 We conclude the proof using the Borel-Cantelli lemma.
\end{proof}

\section{Asymptotic shape theorem}\label{STFA}
To obtain an asymptotic shape theorem we recall the almost subadditive theorem of Kesten and Hammersley (\cite{hamm74}):

\begin{theorem}
\label{kestenhamm}
Let $(\Omega,\mathcal{F},\P)$ be a probability space. On this space, we consider a collection $(X_n)_{n\ge1}$ of real random variables with finite second moments and collections $(Y_{n,p})_{n,p\ge1}$, $(X'_{n,p})_{n,p\ge 1}$ of real random variables such that
\[\forall n,p\ge1\qquad X_{n+p}\le X_{n}+X'_{n,p}+Y_{n,p}.\]
We assume that the following conditions are satisfied:
\begin{itemize} 
 \item For every $n,p$, $X'_{n,p}$ and $X_p$ have the same distribution.
 \item $\sup_{n,p} Corr(X_n,X'_{n,p})<1$.
 \item There exist a non decreasing sequence of reals $(A_p)_{p\ge1}$ with $\sum_{p=1}^{+\infty} \frac{A_p}{p^{2}}<+\infty$ and a constant $\beta>0$ such that:
 \begin{itemize}
\item for every $n$, $\E[(X_n+n\beta)^{-}]^2\leq A_n^2$,
\item for every $n,p$, $\E[ Y^2_{n,p}]\le A_{n+p}^2$.
\end{itemize}
\end{itemize}

Then $\frac{X_n}{n}$ converges in $L^2(\P)$ to a constant $\mu$. Moreover, for every $m\ge 1$, the subsequence $n_k=m2^k$ satisfies:
\[\forall \epsilon>0,~ \sum_{k=1}^{+\infty}\P(|X_{n_k}|>\epsilon)<+\infty.\]
If we assume in addition that there exists a constant $C$ such that for all $\epsilon>0$
\[\limsup_{n\rightarrow\infty}\sup_{p:|n-p|\leq\epsilon n}\frac{|X_n-X_p|}{n}\leq C\epsilon,\]
then $X_n/n$ converges a.s. to $\mu$.
\end{theorem}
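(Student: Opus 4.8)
The plan is to follow the classical proof of the Hammersley--Kesten almost-subadditive ergodic theorem: a deterministic de Bruijn--Erd\H{o}s-type step for the means, a second-moment argument using the correlation hypothesis, and an interpolation step (see~\cite{hamm74,kestentrick}). Only the structure is indicated here. \emph{Stage 1 (means).} Put $g(n)=\E[X_n]$. Taking expectations in $X_{n+p}\le X_n+X'_{n,p}+Y_{n,p}$ and using $\E[Y_{n,p}]\le\E[Y_{n,p}^2]^{1/2}\le A_{n+p}$ gives the almost-subadditivity $g(n+p)\le g(n)+g(p)+A_{n+p}$. Since $(A_p)$ is non-decreasing, $\sum_p A_p/p^2<\infty$ forces $A_n=o(n)$, and with $g(n)\ge -n\beta-\E[(X_n+n\beta)^-]\ge -n\beta-A_n$ the ratios $g(n)/n$ are bounded below. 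The de Bruijn--Erd\H{o}s lemma for almost-subadditive sequences (the error being summed through $\sum_p A_p/p^2<\infty$) then yields $g(n)/n\to\mu$ for a finite $\mu\ge-\beta$.

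\emph{Stage 2 (second moments along dyadic scales).} Fix $m\ge1$, write $n_k=m2^k$, $V_k=X_{n_k}/n_k$. The hypothesis at $n=p=n_k$ gives
\[ V_{k+1}\ \le\ \tfrac12 V_k+\tfrac12 V'_k+\widetilde Z_k,\qquad V'_k:=X'_{n_k,n_k}/n_k\overset{d}{=}V_k,\quad \widetilde Z_k:=Y_{n_k,n_k}/n_{k+1}, \]
with $\mathrm{Corr}(V_k,V'_k)\le\rho:=\sup_{n,p}\mathrm{Corr}(X_n,X'_{n,p})<1$ and $\|\widetilde Z_k\|_2\le A_{n_{k+1}}/n_{k+1}$; a dyadic comparison with $\sum_p A_p/p^2<\infty$ gives $\sum_k A_{n_{k+1}}/n_{k+1}<\infty$. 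The mechanism is that, as $\mathrm{Corr}(V_k,V'_k)\le\rho$,
\[ \Var\!\bigl(\tfrac12 V_k+\tfrac12 V'_k\bigr)\ \le\ \tfrac{1+\rho}{2}\,\Var(V_k)\quad\text{with}\quad \tfrac{1+\rho}{2}<1 . \]
The delicate point — and the main obstacle — is that the bound on $V_{k+1}$ is only one-sided, so this does not directly bound $\Var(V_{k+1})$. One closes the gap (Kesten's trick) by showing the \emph{lower} deviation of $V_k$ is negligible: from $\E[(X_n+n\beta)^-]\le A_n$ and $\mu\ge-\beta$ one first gets $\E[(V_k-\E[V_k])^-]=o(1)$, and iterating the one-sided inequality upgrades this to $L^2$. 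This produces a genuine recursion $\Var(V_{k+1})\le\frac{1+\rho}{2}\Var(V_k)+\e_k$ with $\sum_k\e_k<\infty$ (the extra terms coming from $\widetilde Z_k$ and from the non-negative mean-defect $2g(n_k)+\E[Y_{n_k,n_k}]-g(n_{k+1})$, both summable after normalization). Hence $\sum_k\Var(V_k)<\infty$; combined with $\E[V_k]\to\mu$ this gives $V_k\to\mu$ in $L^2$, and Chebyshev gives $\sum_k\P(|V_k-\mu|>\epsilon)<\infty$ for every $\epsilon>0$ and every $m$.

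\emph{Stage 3 (arbitrary $n$ and a.s. convergence).} For general $n$, expand $n$ in base $2$ and iterate the one-sided inequality: $X_n$ is dominated by a sum of copies (in law) of the $X_{2^i}$ over the binary support of $n$, plus at most $\log_2 n$ defect terms; the large scales contribute $\simeq\mu$, the small scales carry vanishing weight, and the defects are negligible by summability of $A_p/p^2$ after a suitable grouping. Bounding $X_{m2^{k+1}}$ from above by $X_n$ plus defects gives a matching lower bound, so $X_n/n\to\mu$ in $L^2$. Finally, under the extra hypothesis $\limsup_n\sup_{|n-p|\le\epsilon n}|X_n-X_p|/n\le C\epsilon$, the almost sure convergence along the subsequences $(m2^k)_k$ provided by Borel--Cantelli propagates to the whole sequence, since any large $n$ is within relative distance $\epsilon$ of some $m2^k$.

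In summary, the correlation hypothesis supplies the contraction factor $\frac{1+\rho}{2}<1$ and the lower-moment hypothesis controls the lower tail of $X_n-\E[X_n]$; making these two interact to turn the one-sided relation into a contracting variance recursion is the only genuinely delicate point, everything else being bookkeeping with the summable error sequence $(A_p/p^2)$.
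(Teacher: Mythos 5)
A preliminary remark: the paper does not prove this statement at all — it is imported verbatim from Hammersley~\cite{hamm74} and Kesten's discussion~\cite{kestentrick} — so the benchmark here is the classical argument, and your outline does follow its standard architecture. Stage 1 (means via the almost-subadditive de Bruijn--Erd\H{o}s/Hammersley lemma, using $A_n=o(n)$ and the lower bound $g(n)\ge -n\beta-A_n$) and Stage 3 (dyadic interpolation, and propagation of a.s.\ convergence from the sequences $m2^k$, $m$ arbitrary, under the extra continuity hypothesis) are correct as sketches.

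The genuine gap is exactly at the point you yourself flag as the only delicate one, and the two lines you offer there do not close it. You claim that from $\E[(X_n+n\beta)^-]\le A_n$ and $\mu\ge-\beta$ ``one first gets $\E[(V_k-\E[V_k])^-]=o(1)$''. This does not follow: the hypothesis bounds the downward deviations of $V_k$ below the level $-\beta$, not below the level $\E[V_k]\approx\mu$, and when $\mu>-\beta$ the gap between these two levels is of order $1$. Worse, since $V_k-\E[V_k]$ is centered one has $\E[(V_k-\E[V_k])^-]=\E[(V_k-\E[V_k])^+]$, so the assertion is equivalent to $\E|V_k-\E[V_k]|\to0$, i.e.\ essentially the $L^1$ convergence you are trying to prove; taking it as an input is circular. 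The follow-up ``iterating the one-sided inequality upgrades this to $L^2$'' is likewise only an assertion: the deficit $D_k=\tfrac12 V_k+\tfrac12 V_k'+\widetilde Z_k-V_{k+1}\ge0$ is controlled in $L^1$ by the telescoping mean-defects (that part is fine), but an $L^1$-small nonnegative deficit can have arbitrarily large second moment, and $\Var(V_{k+1})=\Var\bigl(\tfrac12 V_k+\tfrac12 V_k'+\widetilde Z_k-D_k\bigr)$ cannot be bounded by $\tfrac{1+\rho}{2}\Var(V_k)+\e_k$ without a second-moment (or covariance) control on $D_k$ that you never produce. This is precisely where Kesten's and Hammersley's proof does its real work — handling the one-sidedness through a more careful treatment of the upper and lower deviations rather than through a naive variance recursion — and your proposal replaces that argument by its conclusion. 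Everything else is standard bookkeeping, but as written the central step of Stage 2 is missing, so the proposal is an outline of the known proof rather than a proof.
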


Using this theorem, we can formulate a general result on some random variables $(\sigma(x))_{x\in\Z^d}$ to conclude an asymptotic shape theorem. 
\begin{theorem}
\label{metaTFA} Let $(\Omega,\mathcal{F},\P)$ be a probability space. Let $(\sigma(x))_{x\in\Z^d}$ be random variables with finite second moments and such that, for every $x\in\Z^d$, $\sigma(x)$ and $\sigma(-x)$ have the same distribution. Let $(s(y))_{y\in\Z^d}$ and $(r(x,y))_{x,y\in\Z^d}$ be collections of random variables such that:
\begin{enumerate}
 \item[Hyp 1:]\label{Hssadd} $\forall x,y\in\Z^d,~\sigma(x+y)\leq\sigma(x)+s(y)+r(x,y)$
with $s(y)$ having the same law as $\sigma(y)$, and being independent from $\sigma(x)$.
\item[Hyp 2:]\label{Hcontrolereste} $\forall x,y\in\Z^d,~\exists C_{x,y}$ and $\alpha_{x,y}<2$ such that $\E[r(nx,py)^2]\le C_{x,y}(n+p)^{\alpha_{x,y}}$.
\item[Hyp 3:]\label{Hau+lin} $\exists C>0$ such that $\forall x\in\Z^d,~ \P(\sigma(nx)>Cn\|x\|)\xrightarrow{n\rightarrow\infty} 0$
\item[Hyp 4:]\label{Hunif} $\exists K>0$ such that $\forall \epsilon>0, \P-p.s ~\exists M$ such that $(\|x\|\geq M\text{ and }\|x-y\|\leq K\|x\|)\Rightarrow \|\sigma(x)-\sigma(y)\|\leq \epsilon\|x\|$.
\item[Hyp 5:]\label{Hau-lin} $\exists c>0$ such that $\forall x\in\Z^d,~ \P(\sigma(nx)<cn\|x\|)\xrightarrow{n\rightarrow\infty} 0$
\end{enumerate}
Then there exists $\mu:\Z^d\to\R^+$ such that 
\[\lim_{\|x\|\rightarrow \infty} \frac{\sigma(x)-\mu(x)}{\|x\|}=0~ a.s.\]
Moreover, $\mu$ can be extended to a norm on $\R^d$ and we have the following asymptotic shape theorem: for all $\epsilon>0$, $\P$ almost surely, for all $t$ large enough,
\[(1-\epsilon)B_{\mu}\subset\frac{\tilde G_t}t\subset(1+\epsilon
)B_{\mu},\]
where $\tilde{G}_t=\{x\in\Z^d:\sigma(x)\le t\}+[0,1]^d$ and $B_\mu$ is the unit ball for $\mu$.
\end{theorem}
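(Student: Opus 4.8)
\emph{Overview.} I would prove the statement in four moves: (i) fix a direction and apply the Kesten--Hammersley theorem (Theorem~\ref{kestenhamm}) to obtain $\sigma(nx)/n\to\mu(x)$; (ii) check that the resulting function $\mu$ extends to a norm on $\R^d$; (iii) upgrade the directional limits to the uniform estimate $\frac{\sigma(x)-\mu(x)}{\|x\|}\to 0$; (iv) read off the shape theorem from (iii) and the comparability $c\|x\|\le\mu(x)\le C\|x\|$. For step (i), fix $x\in\Z^d\setminus\{0\}$ and set $X_n=\sigma(nx)$, $X'_{n,p}=s(px)$, $Y_{n,p}=r(nx,px)$. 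Hyp~1 gives $X_{n+p}\le X_n+X'_{n,p}+Y_{n,p}$, that $X'_{n,p}$ has the law of $X_p$, and that $X'_{n,p}$ is independent of $X_n$, so $\sup_{n,p}\operatorname{Corr}(X_n,X'_{n,p})=0<1$. By Hyp~2, $\E[Y_{n,p}^2]\le C_{x,x}(n+p)^{\alpha_{x,x}}$ with $\alpha_{x,x}<2$, hence $A_m:=1+\sqrt{C_{x,x}}\,m^{\alpha_{x,x}/2}$ (or a suitable constant when $\alpha_{x,x}\le 0$) is non decreasing with $\sum_m A_m/m^2<\infty$; since the $\sigma$'s are non negative, $(X_n+n\beta)^-=0$ for every $\beta>0$, so the remaining moment condition of Theorem~\ref{kestenhamm} is trivial. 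Thus $\sigma(nx)/n\to\mu(x)$ in $L^2$. For almost sure convergence I check the extra $\limsup$ condition: if $|n-p|\le\epsilon n$ then $\|nx-px\|\le\epsilon\|nx\|$, so Hyp~4 gives $|\sigma(nx)-\sigma(px)|\le C\epsilon n\|x\|$ for $n$ large, whence $\limsup_n\sup_{|n-p|\le\epsilon n}\frac{|X_n-X_p|}{n}\le C\|x\|\epsilon$; so $\sigma(nx)/n\to\mu(x)$ a.s., and Hyp~3 and~5 give $c\|x\|\le\mu(x)\le C\|x\|$.

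\emph{$\mu$ is a norm.} For $w\in\Q^d$ write $w=z/b$ with $z\in\Z^d$, $b\in\N^*$ and set $\mu(w)=\mu(z)/b$; this is well defined because $\mu(mz)=m\mu(z)$ for $m\in\N^*$ (a subsequence of the ray limit). The distributional symmetry $\sigma(y)\overset{\mathrm{law}}{=}\sigma(-y)$ forces $\mu(-w)=\mu(w)$, and $\mu$ is positively $\Q$-homogeneous. Subadditivity $\mu(w+w')\le\mu(w)+\mu(w')$ follows by applying Hyp~1 along the rays $nw,nw'$: divide $\sigma(n(w+w'))\le\sigma(nw)+s(nw')+r(nw,nw')$ by $n$, take expectations and let $n\to\infty$, using $\E[s(nw')]/n\to\mu(w')$ and $\E[r(nw,nw')]/n\le\sqrt{\E[r(nw,nw')^2]}/n\to 0$ (Hyp~2). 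On $\Q^d$ we then have $c\|w\|\le\mu(w)\le C\|w\|$ and $|\mu(w)-\mu(w')|\le\mu(w-w')\le C\|w-w'\|$, so $\mu$ extends uniquely to a continuous, positively homogeneous, symmetric, subadditive function on $\R^d$ comparable to $\|\cdot\|$, i.e.\ a norm.

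\emph{Uniform convergence.} Fix $\epsilon>0$ and a finite set $\mathcal D\subset\Z^d$ such that every unit vector lies within $\epsilon$ of some $w/\|w\|$, $w\in\mathcal D$. On the almost sure event on which $\sigma(kw)/k\to\mu(w)$ for all $w\in\mathcal D$ and the conclusion of Hyp~4 holds, take $x\in\Z^d$ with $\|x\|$ large: choosing $w\in\mathcal D$ with $\|x/\|x\|-w/\|w\|\|\le\epsilon$ and $k$ the nearest integer to $\|x\|/\|w\|$, one gets $kw\in\Z^d$ and $\|x-kw\|\le 2\epsilon\|x\|$ for $\|x\|$ large. Then, for $\|x\|$ large, $|\sigma(x)-\sigma(kw)|\le C'\epsilon\|x\|$ by Hyp~4, $|\sigma(kw)-k\mu(w)|\le\epsilon\|x\|$ by the ray limit, and $|k\mu(w)-\mu(x)|=|\mu(kw)-\mu(x)|\le C\|kw-x\|\le 2C\epsilon\|x\|$; summing and letting $\epsilon$ run through a sequence tending to $0$ gives $\frac{\sigma(x)-\mu(x)}{\|x\|}\to 0$ a.s.

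\emph{Shape theorem and main obstacle.} From here the conclusion is routine: if $\sigma(y)\le t$ then $\mu(y)\le t+o(\|y\|)$, and $c\|y\|\le\mu(y)$ forces $\|y\|=O(t)$ and $\mu(y/t)\le 1+o(1)$, so $\tilde G_t/t\subset(1+\epsilon)B_\mu$ for $t$ large (the term $[0,1]^d/t$ being negligible by comparability of $\mu$ with $\|\cdot\|$); conversely, if $\mu(x)\le(1-\epsilon)t$ then its integer part $y$ satisfies $\mu(y)\le(1-\epsilon)t+C\sqrt d$ and $\|y\|=O(t)$, hence $\sigma(y)\le\mu(y)+o(\|y\|)\le t$ for $t$ large, so $x\in\tilde G_t$ (the finitely many $y$ with $\|y\|$ below the threshold being handled by $\sigma(y)<\infty$ a.s.), giving $(1-\epsilon)B_\mu\subset\tilde G_t/t$ for $t$ large. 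The genuinely delicate step is the passage from ray-wise limits to the uniform estimate of step (iii): it relies on the uniform continuity input (Hyp~4) and on producing, for each $x$, an approximating lattice point $kw$ on a nearby rational ray with $\|x-kw\|$ controlled by $\epsilon\|x\|$. Verifying the hypotheses of Theorem~\ref{kestenhamm}---in particular matching Hyp~2 with a summable non decreasing sequence $(A_p)$ and disposing of the lower moment condition via non negativity of $\sigma$---is the other point requiring care, but it is essentially mechanical.
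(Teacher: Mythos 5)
Your proposal is correct and follows essentially the same route as the paper: Kesten--Hammersley applied along each ray with $X_n=\sigma(nx)$, $X'_{n,p}=s(px)$, $Y_{n,p}=r(nx,px)$, extension of $\mu$ to a norm via symmetry, subadditivity and the comparisons $c\|x\|\le\mu(x)\le C\|x\|$ coming from Hyp~3 and Hyp~5, then uniform convergence via Hyp~4 and the shape statement read off from the comparability of $\mu$ with $\|\cdot\|$. The only differences are presentational (you prove the uniform convergence directly with a finite net of rational directions where the paper argues by contradiction after projecting onto a rational ray, and you verify the Kesten--Hammersley moment conditions explicitly), so the substance matches the paper's proof.
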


\begin{proof} \textbf{Step 1:} Convergence in one given direction: we fix $x\in\Z^d\setminus\{0\}$ and we choose for all $n,p\ge 1$, $X_n=\sigma(nx)$, $X'_{n,p}=s(px)$, $Y_{n,p}=r(nx,px)$, $A_n=C_{x,x}n^{\alpha_{x}}$ and the probability measure $\P=\Pbarre$. The hypotheses 1, 2 and 4 allow us to use Theorem~\ref{kestenhamm}. We obtain that for all $x\in\Z^d$, there exists $\mu(x)$ such that $\Pbarre$-a.s.: 
\[\lim_{n \to+\infty} \frac{\sigma(nx)}{n}= \lim_{n \to+\infty} \frac{\overline{\E}
\sigma(nx)}{n}=\mu(x).\]
This convergence also holds in $\L^2(\Pbarre)$.

\textbf{Step 2:} Then we extend $\mu$ to a semi-norm on $\R^d$: 
\begin{itemize}
 \item homogeneity: it holds that $\sigma(nx)$ and $\sigma(-nx)$ have the same distribution, so $\mu(x)=\mu(-x)$ and extracting subsequences we obtain that for all $k\in\Z$, $\mu(kx)=|k|\mu(x)$. 
 \item subadditivity: thanks to the hypotheses 1 and 2 and to the convergence in $\L^1(\P)$ we obtain that $\mu(x+y)\leq\mu(x)+\mu(y)$.
 \item extension to $\R^d$: first we extend $\mu$ to $\Q^d$ by homogeneity. Next, using the hypothesis~3, we extract a subsequence $(n_k)_k$ such that $\P(\sigma(n_kx)>Cn_k\|x\|)\leq \exp(-n_k)$ and we use the Borel-Cantelli lemma to obtain that $\mu(x)\leq C\|x\|$. This inequality allows us to extend $\mu$ to every $x\in\R^d$ by continuity.
\end{itemize}

\textbf{Step 3:} $\mu$ is in fact a norm: thanks to the hypothesis 5 and the Borel-Cantelli lemma again, we obtain that $\mu(x)\ge c\|x\|$ for every $x\in\Z^d$, and then for every $x\in\R^d$ by extension. So $\mu$ separates points. 

\textbf{Step 4:} Thanks to the hypothesis 4, we can show in a classical way the uniform convergence, that is
\[\lim_{\|x\|\rightarrow \infty} \frac{\sigma(x)-\mu(x)}{\|x\|} =0.\]
Let us briefly recall the main ideas. We assume by contradiction that there exists a sequence $(y_n)$ of points of $\Z^d$ converging to a direction $z'$ (which can be chosen rational) such that $\|\sigma(y_n)-\mu(y_n)\| \geq \epsilon \|y_n\|_1$. Then we project this sequence on $z'\Z$ and we use the almost convergence in the direction $z'$. We obtain a contradiction using the uniform continuity of $\sigma$ and $\mu$ (between the sequence $(y_n)$ and its projection).

\textbf{Step 5:} It is now easy to obtain a geometric asymptotic shape theorem: we suppose by contradiction that there exists an increasing sequence $(t_n)$, with $t_n\to+\infty$ and $\frac{G_{t_n}}{t_n}\not\subset(1+\epsilon)A_{\mu}$ and we obtain a subsequence $(x_n)$ such that $\sigma(x_n)\le t_n$ and $\mu(x_n)/t_n>1+\epsilon$, which contradicts the uniform convergence. We do the same thing with the reverse inclusion.
\end{proof}

Let us go back to the contact process with aging. We obtain the expected asymptotic shape theorem for the hitting time $t$:
\thmTFA*

\begin{proof} First, we use Theorem~\ref{metaTFA} to show that $\sigma$ satisfies an asymptotic shape theorem. We check the hypotheses of Theorem~\ref{metaTFA}. Thanks to Corollary~\ref{momentsigma}, $\sigma$ has finite second moment required (in fact $\sigma$ has finite $p$th moment for every $p\geq 1$ and the inequality satisfied by $\sigma$ in Corollary~\ref{momentsigma} allow us to conclude that the convergence of $\frac{\sigma(nx)}{n}$ also holds in $\L^p(\Pbarre)$). We take $s(y)=\sigma(y)\circ\tilde{\theta}_x$. The hypotheses 1 and 2 are satisfied thanks to Corollaries~\ref{invariancePbarre} and~\ref{momentsecart}. The hypothesis~3 that is the at least linear growth has been shown in Corollary~\ref{au-lineairesigma} and the hypothesis~5 of at most linear growth is immediately checked thanks to the at most linear growth for $t$ in Lemma~\ref{richardson}:
\[\Pbarre(\sigma(nx)< Cn\|x\|) \le \Pbarre(t(nx)< Cn\|x\|) \le \frac{A}{\rho}\exp(-BCn\|x\|).\]
Finally, the hypothesis 4 has been shown in Theorem~\ref{uniformesigma}.

Then we deduce the result for $t$ from the result for $\sigma$ thanks to Corollary~\ref{differenceuniforme}.
\end{proof}

\bibliographystyle{alpha}

\begin{thebibliography}{}

\end{thebibliography}


\begin{thebibliography}{GM12b}

\bibitem[BG91]{bezgrim}
Carol Bezuidenhout and Geoffrey Grimmett.
\newblock Exponential decay for subcritical contact and percolation processes.
\newblock {\em Ann. Probab.}, 19(3):984--1009, 1991.

\bibitem[DG82]{durrettgriffeath}
Richard Durrett and David Griffeath.
\newblock Contact processes in several dimensions.
\newblock {\em Z. Wahrsch. Verw. Gebiete}, 59(4):535--552, 1982.

\bibitem[Dur84]{durrettperco}
Richard Durrett.
\newblock Oriented percolation in two dimensions.
\newblock {\em Ann. Probab.}, 12(4):999--1040, 1984.

\bibitem[Fox14]{foxall}
E.~Foxall.
\newblock New results for the two-stage contact process.
\newblock {\em ArXiv e-prints}, January 2014.

\bibitem[GM12a]{GMPCRE}
Olivier Garet and R{\'e}gine Marchand.
\newblock Asymptotic shape for the contact process in random environment.
\newblock {\em The Annals of Applied Probability}, 22(4):1362{\textendash}1410,
  2012.

\bibitem[GM12b]{DOP}
Olivier Garet and R\'egine Marchand.
\newblock Growth of a population in a dynamical hostile environment.
\newblock {\em Advances in Applied Probability}, to appear, 2012.

\bibitem[Ham74]{hamm74}
J.~M. Hammersley.
\newblock Postulates for subadditive processes.
\newblock {\em Ann. Probability}, 2:652--680, 1974.

\bibitem[Har74]{harris74}
T.~E. Harris.
\newblock Contact interactions on a lattice.
\newblock {\em Ann. Probability}, 2:969--988, 1974.

\bibitem[Har78]{harris78}
T.~E. Harris.
\newblock Additive set-valued {M}arkov processes and graphical methods.
\newblock {\em Ann. Probability}, 6(3):355--378, 1978.

\bibitem[HW65]{hammwelsh}
J.~M. Hammersley and D.~J.~A. Welsh.
\newblock First-passage percolation, subadditive processes, stochastic
  networks, and generalized renewal theory.
\newblock In {\em Proc. {I}nternat. {R}es. {S}emin., {S}tatist. {L}ab., {U}niv.
  {C}alifornia, {B}erkeley, {C}alif}, pages 61--110. Springer-Verlag, New York,
  1965.

\bibitem[Kes73]{kestentrick}
H.~Kesten.
\newblock Discussion on {P}rofessor {K}ingman's paper.
\newblock {\em Ann. Probab.}, 1:903, 1973.

\bibitem[Kin73]{king73}
J.~F.~C. Kingman.
\newblock Subadditive ergodic theory.
\newblock {\em Ann. Probability}, 1:883--909, 1973.
\newblock With discussion by D. L. Burkholder, Daryl Daley, H. Kesten, P. Ney,
  Frank Spitzer and J. M. Hammersley, and a reply by the author.

\bibitem[Kro99]{krone}
Stephen~M. Krone.
\newblock The two-stage contact process.
\newblock {\em Ann. Appl. Probab.}, 9(2):331--351, 1999.

\bibitem[Lig99]{lig99}
Thomas~M. Liggett.
\newblock {\em Stochastic interacting systems: contact, voter and exclusion
  processes}, volume 324 of {\em Grundlehren der Mathematischen Wissenschaften
  [Fundamental Principles of Mathematical Sciences]}.
\newblock Springer-Verlag, Berlin, 1999.

\bibitem[Ric73]{richardson}
Daniel Richardson.
\newblock Random growth in a tessellation.
\newblock {\em Proc. Cambridge Philos. Soc.}, 74:515--528, 1973.

\bibitem[SW08]{steifwar}
Jeffrey~E. Steif and Marcus Warfheimer.
\newblock The critical contact process in a randomly evolving environment dies
  out.
\newblock {\em ALEA Lat. Am. J. Probab. Math. Stat.}, 4:337--357, 2008.

\end{thebibliography}

\end{document}